\newcommand{\lo}{{\lambda_0}}
\begin{document}

\title[Chiral algebras, factorization algebras, and $(A,H,S)$-vertex algebras]{Chiral algebras, factorization algebras, and Borcherds's ``singular commutative rings'' approach to vertex algebras}
\author{Emily Cliff}
\date{November 2019}
\begin{abstract}
We recall Borcherds's approach to vertex algebras via ``singular commutative rings'', and introduce new examples of his constructions which we compare to vertex algebras, chiral algebras, and factorization algebras. We show that all vertex algebras (resp. chiral algebras or equivalently factorization algebras) can be realized in these new categories $\VA(A,H,S)$, but we also show that the functors from $\VA(A,H,S)$ to vertex algebras or chiral algebras are not equivalences: a single vertex or chiral algebra may have non-equivalent realizations as an $(A, H,S)$-vertex algebra.
\end{abstract}
%
%
%
\maketitle

\tableofcontents

\section{Introduction}
A vertex algebra describes the symmetries of a two-dimensional conformal field theory, while a factorization algebra over a complex curve consists of local data in such a field theory. Roughly, the factorization structure encodes collisions between local operators. The two perspectives are equivalent, in the sense that, given a fixed open affine curve $X$, the category of vertex algebras over $X$ is equivalent to the category of factorization algebras over $X$ \cite{HL, BD}. In 1999, Borcherds \cite{BorQVA} introduced yet another approach to studying vertex algebras: given some input data $(A, H, S)$, he introduces a category equipped with a ``singular'' tensor product, and he defines $(A, H, S)$-vertex algebras to be commutative ring objects in this category. For a particular choice of input data, $(A, H, S_B)$, he proves that these singular ring objects can be used to produce examples of vertex algebras. An advantage of this approach is that once one understands the definition of the category, one needs to work only with ring axioms rather than with the less familiar axioms of vertex algebras. However, at the time of Borcherds's paper, it was not clear whether all vertex algebras could be realized via this approach. In this paper, we prove that they can, if we modify the input data appropriately. More generally, we address the question of how closely the categories of $(A, H, S)$-vertex algebras and ordinary vertex algebras (or factorization algebras, or equivalently chiral algebras) are related. 

\begin{thm}\label{Theorem: Borcherds doesn't give an equivalence}[Theorem \ref{Theorem: Borcherds doesn't give an equivalence, actual}]
The functor $\Phi$ from the category $\VA(A, H, S_B)$ of $(A,H,S_B)$-vertex algebras to the category $\VA$ of ordinary vertex algebras (arising from Borcherds's constructions) is not an equivalence. 
\end{thm}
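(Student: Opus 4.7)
The plan is to show that $\Phi$ fails to reflect isomorphisms: I will exhibit two non-isomorphic $(A, H, S_B)$-vertex algebras $R_1$ and $R_2$ such that $\Phi(R_1) \cong \Phi(R_2)$ in $\VA$. Since any equivalence of categories must reflect isomorphisms, this is sufficient to prove the theorem.

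First I would pin down exactly what information $\Phi$ retains. An $(A, H, S_B)$-vertex algebra is a commutative ring object in Borcherds's singular tensor category, carrying a whole family of singular multiplications indexed by configurations of points. By inspection of Borcherds's construction, $\Phi$ extracts from this data only the vertex operator $Y$ — essentially the two-point singular multiplication, restricted to a particular algebraic locus — together with the vacuum and translation action. The higher-arity singular multiplications, and portions of the two-point multiplication not recorded in $Y$, are genuine data in the $(A, H, S_B)$-setting but are invisible after applying $\Phi$. This gap is exactly what I will exploit.

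Next I would produce an explicit pair. The simplest target is a commutative vertex algebra — for instance the trivial vertex algebra $\mathbb{C}$, or a polynomial ring viewed as a commutative vertex algebra — which admits a canonical ``purely regular'' $(A, H, S_B)$-structure. I would then construct a second $(A, H, S_B)$-structure on the same underlying $H$-module by perturbing the higher singular multiplications in a way that is compatible with all of Borcherds's singular-ring axioms but that vanishes upon passing to $Y$. A natural place to look is the space of compatible structures at depth $\geq 3$ that reduce to the canonical two-point multiplication; parameterising this space even in a toy example should already give several distinct structures.

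The main obstacle — and the technical heart of the argument — is to verify that the two structures are genuinely non-isomorphic in $\VA(A, H, S_B)$, not merely distinct as data. Any $(A, H, S_B)$-isomorphism must intertwine all of the singular multiplications simultaneously, so I would isolate a numerical invariant extracted from the higher singular products (such as a specific coefficient appearing in a three-point multiplication) that is preserved by every $(A, H, S_B)$-morphism but takes different values on $R_1$ and $R_2$. Once such an invariant is identified in a sufficiently small example, the non-isomorphism is immediate, and the failure of $\Phi$ to be an equivalence follows at once.
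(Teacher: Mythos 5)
Your overall strategy---exhibiting two non-isomorphic objects of $\VA(A,H,S_B)$ with isomorphic images under $\Phi$---would indeed suffice (an equivalence is injective on isomorphism classes), and it targets a different failure mode from the one the paper establishes. But as written there is a genuine gap: the pair $(R_1,R_2)$ is never produced, and the mechanism proposed for producing it is doubtful. An $(A,H,S_B)$-vertex algebra carries a \emph{single} binary singular multiplication $\mu : V \odot V \to V$; the ``higher-arity'' products are iterates of $\mu$, pinned down by associativity and functoriality, so there is no independent ``depth $\geq 3$'' stratum of data to perturb while holding the two-point multiplication fixed---at minimum, the existence of such a perturbation satisfying commutativity, associativity, functoriality and the unit axiom is itself the whole problem, and it is deferred (``parameterising this space \ldots should already give several distinct structures'', ``I would isolate a numerical invariant''). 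What genuinely \emph{is} extra data invisible to $\Phi$ is the functor $I \mapsto V(I)$ on larger finite sets together with the regular part of $\mu_{12}(v\otimes u)$ as an element of $V(1,2)\otimes_{S(1,2)}S(1:2)$, rather than only its image in $V(1)$ under $\alpha_*^V\circ\del_1^{(i)}$; you mention this in passing but then build the argument on the (unsubstantiated) higher-arity freedom instead.

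For comparison, the paper's proof avoids constructing any new $(A,H,S_B)$-vertex algebra. It assumes $\Phi$ is an equivalence and derives a contradiction from essential surjectivity plus fullness: the conformal-structure morphism $\phi_0 : \Vir_1 \to V_L$ would have to lift to a morphism $F : W \to \BV^L$ with $\Phi(W)\cong\Vir_1$. Compatibility of $F$ with $\mu$ forces the regular part $\beta\in W(1,2)$ of $\mu_W(\omega\otimes\omega)$ to satisfy $F(1,2)(\beta)=\alpha\otimes\alpha$, and pushing $\del_x\beta$ forward along the diagonal shows that $\tfrac{1}{2}b_{-2}b_{-1}^3\vac$ must lie in the image of $\phi_0$; a dimension count in degree $5$ of $\Vir_1$ shows it does not. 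That argument exploits precisely the ``portion of the two-point multiplication not recorded in $Y$'' that you correctly identified but set aside. If you want to salvage your route, that is where the freedom actually lives (compare the remark following Proposition \ref{Prop: Sb to translation equivariant by tensoring}, where two non-isomorphic $(A,H,S_{\BBA^1})$-structures over the same vertex algebra are exhibited by changing the spaces $V(I)$ for $|I|\geq 2$); but for $S_B$ specifically you would still owe a complete construction and a non-isomorphism proof.
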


While this is a negative result, we can also obtain positive results by modifying our input data. 

\begin{thm}\label{Theorem: main results on X}[Theorems \ref{Theorem: AHS to vertex algebra}, \ref{Theorem: AHS to chiral algebra}, \ref{Theorem: Factorization to AHS}, \ref{Theorem: composition}, \ref{Theorem: Borcherds doesn't give an equivalence, actual}]
Given $X$ an open affine subset of $\BBA^1_{\BC}$, there exist input data $(A, H, S_X)$ such that we have 
\begin{itemize}
\item a functor $\Phi_X$ from the category $\VA(A, H, S_X)$ of $(A, H, S_X)$-vertex algebras to the category $\VA(X)$ of vertex algebras on $X$, and
\item a functor $\Gamma_X$ from the category $\FA(X)$ of factorization algebras on $X$ to the category $\VA(A, H, S_X)$, 
\end{itemize}
such that $\Gamma_X$ is a section (or left-inverse) of $\Phi_X$ in the following sense. Given a vertex algebra $V$ over $X$, let $\CB^V$ denote the corresponding factorization algebra over $X$. Then $\Phi_X \circ \Gamma_X (\CB^V) =V$. In particular, $\Phi_X$ is essentially surjective. 

However, $\Phi_X$ fails to be an equivalence: distinct $(A, H, S_X)$-vertex algebras may give rise to isomorphic vertex algebras over $X$. 

Furthermore, the composition of the functor $\Phi_X$ with the well-studied equivalence $\Psi_X$ (of \cite{BD}) from $\VA(X)$ to the category $\CAlg(X)$ of chiral algebras on $X$ can be described explicitly, without reference to the intermediate category $\VA(X)$. 
\end{thm}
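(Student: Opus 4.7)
The plan is to construct the input data $(A, H, S_X)$ so that singular commutative rings in this framework encode exactly the analytic data of a chiral product with poles controlled along the diagonals of $X^I$. Concretely, I take $A$ to be a cocommutative Hopf algebra governing translations along $X$ (so that $A$-modules carry an action by formal differentiation), $H$ a suitable category of $A$-modules, and the singularity system $S_X$ indexed by finite sets $I$, whose $I$-ary morphisms encode maps valued in sections over $X^I$ with prescribed order of pole along each partial diagonal. This choice is designed so that the $2$-ary component recovers, after identifying translation with formal differentiation, the formal Laurent expansions that appear in the definition of a vertex operator $Y(a,z)b$.

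With this in place, the functor $\Phi_X$ of Theorem~\ref{Theorem: AHS to vertex algebra} sends a singular commutative ring $R$ in $(A, H, S_X)$ to its underlying $A$-module equipped with the state--field correspondence read off from the $2$-ary product; commutativity and associativity of the singular ring translate into the vacuum, translation covariance, and locality axioms of a vertex algebra on $X$. In the reverse direction, the functor $\Gamma_X$ of Theorem~\ref{Theorem: Factorization to AHS} is built by taking a factorization algebra $\CB$ on $X$ and extracting the required $I$-ary multiplication maps directly from the factorization isomorphisms on $X^I$ away from the diagonals, with the $A$-equivariance coming from the canonical $D$-module structure on $\CB$.

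To verify the identity $\Phi_X \circ \Gamma_X(\CB^V) = V$ of Theorem~\ref{Theorem: composition}, I would note that both the Beilinson--Drinfeld reconstruction $V \mapsto \CB^V$ and the Borcherds-style extraction $\Phi_X$ use the same underlying data, namely the vertex operators viewed as sections over $X^2$ with controlled poles along the diagonal, so the round trip should be literal equality rather than merely an isomorphism. Essential surjectivity of $\Phi_X$ is then immediate. For the failure of full faithfulness (Theorem~\ref{Theorem: Borcherds doesn't give an equivalence, actual}), the plan is to exhibit two non-isomorphic $(A, H, S_X)$-vertex algebras whose underlying vertex algebras coincide; the distinguishing data should be the choice of how to extend the singular product to configuration spaces $X^I$ for $|I| \geq 3$, since only the $2$-ary data is visible in the vertex algebra. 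A commutative vertex algebra is the natural candidate, where the ``trivial'' extension can be compared to a nontrivial lift, and the same strategy adapts to prove Theorem~\ref{Theorem: Borcherds doesn't give an equivalence, actual} for $S_B$ in place of $S_X$.

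Finally, for the explicit description of $\Psi_X \circ \Phi_X$, I would bypass $\VA(X)$ altogether by constructing a chiral bracket directly on an $(A, H, S_X)$-vertex algebra $R$ as the residue along the diagonal of the $2$-ary singular product, packaged as a map of $D$-modules on $X^2$, and then check this coincides with the composition going through $\VA(X)$. The main obstacle throughout is the bookkeeping: pinning down $S_X$ precisely enough that the axioms of a singular commutative ring correspond literally (not merely up to canonical isomorphism) to the factorization/vertex axioms, and then tracking signs, orders of poles, and normalizations carefully enough that the compatibility with $\Psi_X$ holds on the nose. This is the step where most of the substantive work will lie.
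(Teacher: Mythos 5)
Your overall architecture matches the paper's for the two functors and the chiral bracket: $S_X(I)$ is indeed built from functions on the complement of the partial diagonals in $X^I$, $\Phi_X$ reads off $Y(\cdot,z)$ from the $2$-ary singular product via a Taylor-expansion map (with locality coming from commutativity plus associativity of $\mu$), $\Gamma_X$ takes global sections of $\CB_{X^I}$ and extracts $\mu$ from the factorization isomorphisms, and the explicit chiral bracket is the singular product followed by the canonical ``principal part'' surjection $j_*j^*\CV(1,2)\to\Delta_*\Delta^*\CV(1,2)$ and the map to the diagonal, essentially your residue description. One small but real confusion: in Borcherds's framework $A$ is the ambient symmetric monoidal category (here $\Vect$) and $H$ is the cocommutative bialgebra $\BC[\del]$ acting by differentiation; you have inverted these roles, which does not affect the substance but would propagate notational errors.

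There are two genuine gaps. First, the identity $\Phi_X\circ\Gamma_X(\CB^V)=V$ is not automatic from ``both sides use the same data'': the nontrivial point is that Borcherds's recipe $\widetilde Y(v,z)u=\sum_i \alpha^V_*(\del_1^{(i)}A)z^{i-N}$, where $A=(x_1-x_2)^N\mu_{12}(v\otimes u)$ and $\alpha^V_*$ is computed by inverting the Ran isomorphism, returns exactly the original modes $v_n(u)$. The paper has to prove an identity of the form $\lambda^{\mathrm{ch}}(\del_1^iA\cdot(x_1-x_2)^{-k})=\sum_j \frac{(N-j-1)!}{(N-j-1-i)!}v_{(j)}u\cdot(x_1-x_2)^{N-j-k-i-1}$ by induction on $i$ to conclude $\alpha^V_*(\del_1^{(i)}A)=v_{N-i-1}(u)$; your proposal skips this computation entirely. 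Second, your strategy for non-equivalence (two non-isomorphic lifts of a commutative vertex algebra differing in the higher-arity data) is a different route from the paper's and is not carried out: you would still have to construct both lifts and prove they are non-isomorphic. The paper instead argues by obstruction to fullness: if $\Phi$ were an equivalence one could lift the conformal-structure morphism $\phi_0:\Vir_1\to V_L$ to a morphism $F:W\to\mathbb{V}^L$ of $(A,H,S_B)$-vertex algebras; an explicit bicharacter computation of $\mu(\alpha\otimes\alpha)$ for $\alpha=F(\omega)$ produces an element $\beta\in W(1,2)$ whose image $m\circ F(1,2)(\del_x\beta)=\tfrac12 b_{-2}b_{-1}^3\vac$ lies outside the (two-dimensional) degree-$5$ image of $\phi_0$, a contradiction. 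Without either that computation or a completed version of your alternative construction, the ``fails to be an equivalence'' clause remains unproved.
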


The functors and results of Theorem \ref{Theorem: main results on X} extend to the translation-equivariant setting, which allows us to use the equivalence $\Xi^{\VA}$ between $\VA$ and the category $\VA(\BBA^1)^{\BBA^1}$ of translation-equivariant vertex algebras on $X=\BBA^1$ to relate the new functor $\Phi_{\BBA^1}$ to the functor $\Phi$ resulting from Borcherds's original constructions.

\begin{thm}\label{Theorem: translation-equivariance}[Proposition \ref{Prop: Sb to translation equivariant by tensoring}]
There is a functor $\Xi$ from $\VA(A,H, S_B)$-vertex algebras to the category $\VA(A, H, S_{\BBA^1})^{\BBA^1}$ of translation-equivariant $(A, H, S_{\BBA^1})$-vertex algebras, which intertwines the functors $\Phi$ and $\Phi_{\BBA^1}$ via $\Xi^{\VA}$. 
\end{thm}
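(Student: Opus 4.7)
The plan is to realize $S_{\BBA^1}$ as a base change of Borcherds's $S_B$ along the algebra $\mathcal{O}(\BBA^1) = \BC[t]$ of polynomial functions on $\BBA^1$, and to define $\Xi$ by the corresponding tensoring construction. Concretely, given $V \in \VA(A,H,S_B)$, I would set $\Xi(V) := V \otimes_{\BC} \mathcal{O}(\BBA^1)$, equipped with the singular commutative ring structure inherited from $V$ on the first factor and the ordinary multiplication on the second factor; the additive translation action of $\BBA^1$ on $\mathcal{O}(\BBA^1)$, with $V$ sitting in a translation-invariant position, supplies the required translation-equivariant structure. The first task is to check that this prescription actually lands in the singular tensor category underlying $S_{\BBA^1}$, that the ring axioms transport termwise along this tensoring, and that the construction is functorial in $V$ and in morphisms.

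For the intertwining property, I would then compare the two translation-equivariant vertex algebras $\Xi^{\VA}(\Phi(V))$ and $\Phi_{\BBA^1}(\Xi(V))$ on $\BBA^1$. Under $\Xi^{\VA}$, the Borcherds output $\Phi(V)$ becomes the ``constant family'' over $\BBA^1$ with fiber $\Phi(V)$ and tautological translation structure. On the other side, $\Phi_{\BBA^1}$ reads off a vertex algebra on $\BBA^1$ directly from the $S_{\BBA^1}$-structure on $V \otimes \mathcal{O}(\BBA^1)$; since that structure is by construction obtained from the $S_B$-structure maps of $V$ by base-changing to $\mathcal{O}(\BBA^1)$, the resulting vertex operations should coincide with those of $\Phi(V)$ put in families. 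Agreement of both the underlying vertex algebra and its translation-equivariant structure then reduces to a diagram chase at the level of definitions.

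The chief obstacle will be aligning Borcherds's formal-series description of the vertex operations produced by $\Phi$ with the sheaf-theoretic operations on $\BBA^1$ produced by $\Phi_{\BBA^1}$. Making this precise requires a clean base-change description of $S_{\BBA^1}$ in terms of $S_B$; I expect this identification to be largely built into the construction of $\Gamma_X$ in Theorem \ref{Theorem: Factorization to AHS}, at which point the verification of the intertwining should amount to a routine compatibility check for the singular structure maps under the tensoring with $\mathcal{O}(\BBA^1)$.
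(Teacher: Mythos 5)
Your proposal matches the paper's construction: the paper defines $\Xi(V)(I) = V(I) \otimes_{S_B(I)} S_{\BBA^1}(I) = V(I) \otimes_{\BC} \BC[x_i]_{i \in I}$, with $\del_i$ acting by the Leibniz rule, the singular multiplication extended $\BC[x_i]$-linearly, and the identity map as the translation-equivariant structure $\psi^I: p^*W(I) \to a^*W(I)$ --- exactly your ``base change of $S_B$ to $S_{\BBA^1}$'' with the tautological equivariance (just note that levelwise one tensors with $\BC[x_i]_{i\in I}$ rather than a single copy of $\mathcal{O}(\BBA^1)$, which is what your base-change framing gives anyway). The intertwining with $\Phi$ and $\Phi_{\BBA^1}$ is likewise left as a definitional check in the paper, so your sketch is at the same level of rigor.
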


The relationships between the categories in question can thus be summarized in the following diagram (where $\CAlg(\BBA^1)^{\BBA^1}$, respectively $\FA(\BBA^1)^{\BBA^1}$, denotes the category of translation-equivariant chiral algebras, respectively factorization algebras, on $\BBA^1$):

\begin{center}
\begin{tikzpicture}[>=angle 90, bij/.style={above,inner sep=0.5pt}]
\matrix(b)[matrix of math nodes, row sep=2em, column sep=2em, text height=1.5ex, text depth=0.25ex]
{\VA & \VA(\BBA^1)^{\BBA^1} & \CAlg(\BBA^1)^{\BBA^1} & \FA(\BBA^1)^{\BBA^1}.\\
 \VA(A, H, S_B) & \VA(A, H, S_{\BBA^1})^{\BBA^1} & & \\};
 \path[->, font=\scriptsize]
 (b-1-1) edge node[above, bij]{$\sim$} node[below]{$\Xi^{\VA}$} (b-1-2)
 (b-1-2) edge node[above, bij]{$\sim$} (b-1-3)
 (b-1-3) edge node[above, bij]{$\sim$} (b-1-4)
 (b-2-1) edge node[below]{$\Xi$} (b-2-2)
 (b-2-1) edge node[left]{$\Phi$} (b-1-1)
 (b-2-2) edge node[right]{$\Phi_{\BBA^1}$} (b-1-2)
 (b-1-4) edge[dashed, bend left=20] node[below right]{$\Gamma_{\BBA^1}$}(b-2-2);
\end{tikzpicture}
\end{center}

Here the arrow corresponding to $\Gamma_{\BBA^1}$ is dashed to remind us that the morphisms in the right-hand part of the diagram are not completely compatible: one does not obtain the identity functor if one begins tracing a clockwise cycle at the bottom vertex $\VA(A, H, S_{\BBA^1})^{\BBA^1}$, but only if one begins at one of the vertices in the top row. 

\subsection{Future directions/applications}
In the preprint \cite{Joyce}, Joyce defines vertex algebra structures on homology of moduli spaces using techniques similar to Borcherds's construction of the lattice vertex algebra via the functor $\Phi$. It is exciting that these structures exist, but it is difficult to see where the geometry of the underlying moduli spaces is used in these constructions. The present paper illuminates the relationship between $(A, H, S)$-vertex algebras and factorization algebras, which are more explicitly geometric objects; in future work we will attempt to exploit this relationship to better understand Joyce's results. 

As suggested by its title, the paper \cite{BorQVA} generalized the definition of $(A, H, S)$-vertex algebras to $(A, H, S)$-\emph{quantum vertex algebras}. There are several approaches to quantum vertex algebras, but Borcherds's approach seems particularly promising, particularly in light of the results of the current paper. For example, Anguelova--Bergvelt \cite{AB} compare several approaches, and adopt Borchderds's techniques for constructing examples of quantum vertex algebras, but give as their reason for not framing their axioms in the language of $(A,H,S)$-vertex algebras the observation that \emph{``even for
classical vertex algebras it seems not known how to include such basic examples as affine vertex algebras in the $(A, H, S)$-framework''}. However, we now see that the functor $\Gamma_{\BBA^1}$ allows us to view any vertex algebra as an $(A,H,S)$-vertex algebra. In future work, we will adapt the axioms of quantum $(A,H,S)$-vertex algebras to the setting of factorization algebras in a way compatible with the suitable analogue of $\Gamma_{\BBA^1}$, and will study the resulting objects. 

\subsection{Notation and conventions} \label{Subsec: notation}
For simplicity, we work over $\BC$ throughout (unless explicitly stated otherwise), although some of the definitions and results admit generalizations to other fields and even rings. (In particular, in \cite{BorQVA}, Borcherds gives the definition of an $(A, H, S)$-vertex algebra over an arbitrary ring.)

We work with $X$ an open affine subset of $\BBA^1$, and we fix a choice of global coordinate $x$ on $X$. This allows us to express $\Gamma(X, \CO_X)$ as a localization of $\BC[x]$; it also means that we have a derivation $\frac{d}{dx}$ on $\Gamma(X, \CO_X)$. 

\subsection{Structure of the paper}
The paper is organized as follows. In Section \ref{Section: Background, general} we provide background on vertex algebras, chiral algebras, factorization algebras, and the relationships between them, while in Section \ref{Section: Background, AHS} we review the definitions of $(A, H, S)$-vertex algebras, following \cite{BorQVA}. We also introduce the variations $(A, H, S_X)$ of input data that will be needed for our applications. In Section \ref{Section: from AHS to vertex algebras} we construct the functor $\Phi_X$ from $\VA(A, H, S_X)$ to $\VA(X)$, and in \ref{Section: from AHS to chiral} we give an explicit description of the composition of $\Phi_X$ with the well-studied morphism $\Psi_X$ from $\VA(X)$ to $\CAlg(X)$. In Section \ref{Section: from factorization to AHS} we construct the functor $\Gamma_X$ from $\FA(X)$ to $\VA(A, H, S_X)$, and in Section \ref{sec: composition of functors} we prove that $\Gamma_X$ gives a Section of $\Phi_X$ in the sense described in Theorem \ref{Theorem: main results on X}. We study the translation-equivariant version of the story in Section \ref{Section: translation-equivariant}. Finally, in Section \ref{sec: failure to be an equivalence} we show that $\Phi$ cannot be an equivalence, by studying the specific example of the Virasoro vertex algebra mapping to the rank-one lattice vertex algebra. 

\subsection{Acknowledgements}
I thank Dominic Joyce for introducing me to the paper \cite{BorQVA}, and for providing, via his work \cite{Joyce} on vertex algebra structures on the homology of moduli spaces, the motivation to study the relationship between Borcherds's constructions and factorization algebras. Thanks also to Kobi Kremnizer, Thomas Nevins, and Reimundo Heluani for helpful discussions, and to Yi-Zhi Huang for useful email communications. I am grateful as well to Anna Romanov for comments on a draft of the paper. 

\section{Background: Vertex algebras, chiral algebras, and factorization algebras}\label{Section: Background, general}

In this section, we recall the definitions of the key players: vertex algebras, chiral algebras, and factorization algebras over $X$. Although the definitions of chiral algebras and factorization algebras make sense for more general $X$, we restrict ourselves to $X$ open affine subsets of $\BBA^1$, which is necessary for the definition of a vertex algebra over $X$. As in \ref{Subsec: notation}, we fix a global coordinate $x$ on $X$, which determines a derivation $\frac {d}{dx}$ on $\Gamma(X, \CO_X)$. 

\begin{defn}\label{Def: vertex algebra}
A \emph{vertex algebra} 
\begin{align*}
V=(V, \vac, T, Y(\bullet, z))
\end{align*}
consists of the following data:
\begin{itemize}
\item a vector space $V$ over the field $k$;
\item a distinguished element $\vac \in V$, called the \emph{vacuum element};
\item a linear map $T: V \to V$ called the \emph{translation operator};
\item a linear map $Y(\cdot, z): V \otimes V \to V[\![z^{\pm 1} ]\!]$. We write
\begin{align*}
Y(v, z)u = \sum _{n \in \BZ} v_n (u) z^{-n-1},
\end{align*}
where for each $n \in \BZ$, $v_n$ is an endomorphism of $V$. 
\end{itemize}
These data are subject to the following axioms:
\begin{description}
\item[The lower truncation condition] For fixed $v, u \in V$, there exists $N \gg 0$ such that for all $n \ge N$, $v_n u = 0$. 
\item[The vacuum condition] $Y(\vac, z) = \Id_V$. 
\item[The creation property] $Y(v, z)\vac = v + \sum_{n \le -2} v_n\vac z^{-n-1}$.
\item[The derivative property] $\frac{d}{dz} Y(v,z) = Y(Tv, z)$.
\item[The locality condition] This says that for any two $v,u \in V$, the corresponding fields $Y(v, z)$ and $Y(u, w)$ are \emph{mutually local}. More explicitly, for any third vector $a \in V$, there exists some $N \gg 0$ such that 
\begin{align*}
(z-w)^N Y(v, z) Y(u, w) a = (z-w)^N Y(u, w) Y (v, z) a \in V[\![ z^{\pm1}, w^{\pm 1}]\!].
\end{align*}
\end{description}
A \emph{morphism} of vertex algebras is a linear map preserving all of the above structure. 
\end{defn}

\begin{eg}\label{Example: lattice VOA}
An important example for us will be the so-called \emph{lattice vertex algebra}, $V_L$. Let us fix some notation for this example, which will be necessary for computations later on; we follow the notation of Section 5.4 of \cite{FBZ}. Fix $L$ an even integral lattice, with $(\cdot, \cdot): L \times L \to \BZ$ its positive definite symmetric bilinear form. We let $\fh = L \otimes_\BZ \BC$, viewed as a commutative Lie algebra, and we denote by $\widehat{\fh}$ the \emph{Heisenberg Lie algebra} associated to $L$, a central extension of $\fh((t))$ by $\BC \mathbb{1}$. We consider the Weyl algebra $\widetilde{\CH}_L$, which is the (completed) enveloping algebra of $\widehat{\fh}$ modulo the relation $\mathbb{1} = 1$. It has (topological) generators $h_n$ for $h \in \fh$ and $n \in \BZ$, satisfying $[h_n, g_m] = n (h, g) \delta_{n, -m}$. 

Now for any $\lambda \in L$ we have the \emph{Fock representation} $\pi_\lambda$ of $\widetilde{\CH}_L$, which is generated by a single vector $| \lambda \rangle$ subject to the relations
\begin{align*}
h_n | \lambda \rangle = 0 \text{ for } n > 0; \qquad h_0 | \lambda \rangle =(\lambda, h) | \lambda \rangle. 
\end{align*}

With this data, we can define the lattice vertex algebra $V_L$. As a vector space, we have 
\begin{align*}
V_L = \bigoplus_{\lambda \in L} \pi_\lambda.
\end{align*}

It has a natural structure of vertex algebra (depending on a choice of 2-cocycle $c: L \times L \to \{\pm 1\}$), which we do not spell out here, except to say that the translation operator $T$ is given by acting by 
\begin{align*}
\frac{1}{2}\sum_{a \in A} \sum_{n \in \BZ} (\lambda_a)_n (\lambda^a)_{-n-1},
\end{align*}
where $\{\lambda_a\}_{a \in A}$ is a basis of $L$, and $\{\lambda^a\}_{a \in A}$ gives a dual basis in $\fh = L \otimes_\BZ \BC$. More details can be found in Section 5.4 of \cite{FBZ}.
\end{eg}

\begin{eg}\label{Example: Virasoro vertex algebra}
Another important example is the \emph{Virasoro vertex algebra} $\Vir_c$ at level $c \in \BC$. Consider the \emph{Virasoro Lie algebra} $\Vir$, which is a central extension of the Lie algebra $\Der \BC [\![ t ]\!]$ of derivations of the ring $\BC [\![ t ]\!]$. It has topological generators $L_n = -t^{n+1}\partial_t, n \in \BZ$, and $\bc$, satisfying to the following relations:
\begin{align*}
[L_n, L_m] = (n-m)L^{n+m} + \frac{n^3 - n}{12} \delta_{n, -m} \bc \quad \forall n, m; \qquad [L_n, \bc] = 0 \quad \forall n.   
\end{align*}

Given a complex number $c \in \BC$, the representation $\Vir_c$ is generated by a single vector $v_c$, subject to the relations:
\begin{align*}
L_n v_c = 0 \text{ for } n \ge -1; \qquad \bc v_c = c v_c.
\end{align*}

The Virasoro vertex algebra encodes the conformal symmetry of a vertex algebra corresponding to a 2d conformal field theory. More precisely, a \emph{conformal structure} (of central charge $c$) on a vertex algebra $V$ is a morphism of vertex algebras 
\begin{align*}
\phi: \Vir_c \to V.
\end{align*}
It is not difficult to check that the data of such a map is determined by specifying the single vector $\phi(L_{-2}v_c) \in V$, which is then called the \emph{conformal vector} and is often denoted by $\omega$. This vector must satisfy certain properties in order for the morphism $\phi$ to be well-defined (see for example Section 2.5 of \cite{FBZ}). 
\end{eg}

\begin{eg}\label{Example: conformal structure on lattice VOA}
For example, in the case of a rank 1 lattice $L = \sqrt{N} \BZ$ (for $N$ even), there is an infinite family $\{\phi_\lambda\}$ of conformal structures on $V_L$, parametrized by a choice of $\lambda \in \frac{\sqrt{N}}{2} \BZ$. Let us denote by $b$ the basis element $\sqrt{N} \otimes \frac{1}{\sqrt{N}}$ of $\fh = L \otimes_\BZ \BC$. For a fixed choice of $\lambda \in  \frac{\sqrt{N}}{2} \BZ$, the central charge will be $c_\lambda = 1 - 12\lambda^2$, and we have 
\begin{align}\label{Eq: conformal vector}
\omega_\lambda \defeq \phi_\lambda(L_{-2}v_{c_\lambda}) = \frac{1}{2} b_{-1}^2\vac + \lambda b_{-2}\vac.
\end{align}

\end{eg}

\begin{defn}[Def. 5.1, \cite{HL}]
A \emph{vertex algebra over $X$} consists of a vertex algebra $(V, \vac, T, Y(\cdot, z))$ together with the structure of an $\Gamma(X, \CO_X)$-module on $V$ such that
\begin{itemize}
\item For any $f, g \in \Gamma(X, \CO_X)$, and any $v, u \in V$, we have
\begin{align*}
Y(f(x)\cdot v, z) (g(y) \cdot u) = f(x + z) g(x) \cdot Y(v, z)u,
\end{align*}
where $x$ and $y$ are the global coordinates of the first and second copies of $X$ respectively, acting on $V \otimes V$;
\item For any $f \in \Gamma(X, \CO_X)$ and any $v \in V$ we have
\begin{align*}
T(f(x)\cdot v) = (\frac{d}{dx} f(x)) \cdot v + f(x) \cdot (T v).
\end{align*}
\end{itemize}
\end{defn}

We will let $\VA$ denote the category of vertex algebras, and $\VA(X)$ denote the category of vertex algebras over $X$. 

\begin{defn}[Sec. 3.3, \cite{BD}]
A \emph{non-unital chiral algebra} over $X$ consists of a right $\CD_X$-module $\CA$ equipped with a morphism of $\CD$-modules
\begin{align*}
\mu^\ch : j_* j^* \CA \boxtimes \CA \to \Delta_! \CA
\end{align*}
on $X^2$, which satisfies skew-symmetry and the Jacobi identity. This morphism is called the \emph{chiral bracket}. 

Here $\Delta: X \to X^2$ is the diagonal embedding, and $j$ is the open embedding of its complement $U$.
\end{defn}

\begin{rmk}\label{Remark: affine chiral algebras}
Because $X$ is affine, the data of the $\CD_X$-module $\CA$ is determined by its global sections $A$, viewed as a (right) module over $\CD(X) = \Gamma(X, \CD_X)$.

 Similarly, because $X^2$ and $U$ are also affine, the data of $\mu^\ch$ can also be described in terms of its action on global sections; for this we adopt some notation that will be convenient later on. For a finite set $I$, let $X^I$ denote the product of $|I|$ copies of $X$, let $x_i$ be the fixed global coordinate on the $i$th factor, and let $S_X(I) = \Gamma(X^I, \CO_{X^I})$. In particular, by a slight abuse of notation, we denote by $S_X(1)$ the coordinate ring $\Gamma(X, \CO_X)$, and by $S_X(1,2)$ the global sections $\Gamma(X^2, \CO_{X^2})$. Furthermore, let us denote by $S_X(1:2)$ the sections $\Gamma(U, \CO_{X^2})$, so that $S_X(1:2)$ is the localization of $S_X(1,2)$ by $(x_1 - x_2)$. With this notation in mind, we see that
\begin{align*}
\Gamma(X^2, j_* j^* \CA \boxtimes \CA) = (A \otimes_\BC A) \otimes_{S_X(1,2)} S_X(1:2).
\end{align*}
On the other hand, identifying $\Delta_! \CA = \frac{j_* j^* (\Omega_X \boxtimes \CA)}{\Omega_X \boxtimes \CA}$ as in Section 19.1.1 of \cite{FBZ}, we have
\begin{align*}
\Gamma(X^2, \Delta_! \CA) = \frac{(S_X(1)dx_1 \otimes_\BC A) \otimes_{S_X(1,2)} S_X(1:2)} {S_X(1)dx_1 \otimes_\BC A}.
\end{align*}
In particular, this module is spanned over $\BC$ by elements that look like 
\begin{align*}
(f(x_1)dx_1 \otimes a) \otimes (x_1 - x_2)^{-N} \text{ mod }  S_X(1)dx_1 \otimes_\BC A,
\end{align*}
for $f(x_1) \in S_X(1)$, $a \in A$, and $N >0$. Then the chiral bracket $\mu^\ch$ is determined by a morphism of these right $\CD(X^2)$-modules satisfying properties corresponding to the skew-symmetry and the Jacobi identity.
\end{rmk}

\begin{eg}\label{Example: unit chiral algebra}
It is straightforward to check that the natural map $j_* j^* \omega_X \boxtimes \omega_X \to \Delta_! \omega_X$ makes the canonical bundle $\omega_X = \Omega_X$ into a chiral algebra. 
\end{eg}

\begin{defn}[Sec. 3.3.3, \cite{BD}]
A \emph{unital} chiral algebra $\CA$ is a chiral algebra $(\CA, \mu^\ch)$ equipped with a map
\begin{align*}
u: \omega_X \to \CA
\end{align*}
of chiral algebras such that the restriction of the chiral bracket $\mu^\ch$ to $j_*j^*(\omega_X \boxtimes \CA)$ is the canonical quotient map $j_*j^*(\omega_X \boxtimes \CA)  \surj \Delta_! \CA$. 

From now on, all chiral algebras will be assumed to be unital unless explicitly stated otherwise. We let $\CAlg(X)$ denote the category of (unital) chiral algebras on $X$. 
\end{defn}

Before we can make the definition of a factorization algebra, we need to set some additional notation.
\begin{notation}\label{Notation: categories of finite sets}
Let $\Fin$ denote the category of all finite sets with maps between them. On the other hand, let $\fSet$ denote the category of all \emph{non-empty} finite sets with morphisms being only the surjections between them. (Note that later on we will have a third category $\FinNEq$, but let us postpone its definition until Notation \ref{Notation: finite sets with equivalence relation} when it is actually needed.) 

A surjection $\alpha: I \surj J$ determines a closed embedding $\Delta(\alpha): X^J \to X^I$. On the other hand, an arbitrary map $\alpha: I \to J$ factors into a surjection following by an injection:
\begin{align*}
I \xrightarrow{\alpha^\prime} \im(\alpha) \defeq K \xrightarrow{\beta} J.
\end{align*}
The injection $\beta: K \to J$ determines a projection $\pi(\beta): X^J \to X^K$. We denote the composition of $\pi(\beta)$ and $\Delta(\alpha^\prime)$ by $\phi(\beta): X^J \to X^I$. 

The map $\alpha$ also determines an open embedding $U(\alpha) \emb X^I$, where
\begin{align*}
U(\alpha) = \{ (x_i) \in X^I \ \vert \ x_{i_1} \ne x_{i_2} \text{ unless } \alpha(i_1) = \alpha(i_2)\}. 
\end{align*}
We denote this open embedding by $j(\alpha)$. 
\end{notation}

\begin{defn}[Section 3.4, \cite{BD}]\label{Def: factorization algebra}
A \emph{factorization algebra} on $X$ consists of a family $\left\{\CB_{X^I} \in \CD(X^I) \right\}_{I \in \fSet}$ of left $\CD$-modules together with isomorphisms 
\begin{align*}
\nu_\alpha&: \CB_{X^I} \EquivTo \Delta(\alpha)^* \CB_{X^J}; \\
d_\alpha&:  j(\alpha)^* \left(\boxtimes_{j \in J} \CB_{X^{I_j}} \right) \EquivTo j(\alpha)^* \CB_{X^I}
\end{align*}
for any $\alpha: I \surj J \in \fSet$. We require that the $\CB_{X^I}$ have no non-zero local sections supported at the diagonal divisor. We also require that the $\nu_\alpha$ and $d_\alpha$ satisfy compatibility conditions with respect to compositions of maps $\alpha$. 
\end{defn}

\begin{eg}
It is a straightforward exercise to check that the collection $\CO = \left\{\CO_{X^I}\right\}$ can be given a natural structure of factorization algebra. 
\end{eg}

\begin{defn}[Section 3.4.4, \cite{BD}]
A factorization algebra $\CB = \{\CB_{X^I}\}$ on $X$ is \emph{unital} if it is equipped with a map $u: \CO \to \CB$ of factorization algebras which satisfies the following two conditions:
\begin{enumerate}
\item Let $1 \in \CB_X$ denote the image of the unit of $\CO_X$ under $u$. For any local section $b$ of $\CB_{X}$, $1 \otimes b$ is a local section of $\CB_X \boxtimes \CB_X$, so we can view it as a section of $j_* j^* (\CB_X \boxtimes \CB_X)$; hence $d_{\id}(1 \otimes b)$ is a section of $j_* j^* (\CB_{X^2})$. We require that it is is actually a section of $\CB_{X^2}$. By abuse of notation, we will write $1 \otimes b \in \CB_{X^2}$. 
\item We require furthermore that $\Delta^*(1 \otimes b) = b$, under the identification of $\Delta^*(\CB_{X^2})$ with $\CB_X$. 
\end{enumerate} 
\end{defn}

\begin{rmk}[Remark on the definition]
Beilinson and Drinfeld (Section 3.4.5 \cite{BD}) remark that a unital factorization algebra is equivalent to the following data: 
\begin{itemize}
\item For any $I \in \Fin$ (in particular for $I = \emptyset$) we have a left $\CD$-module on $X^I$. 
\item For any morphism of finite sets $\alpha: I \to J$ we have morphisms of left $\CD$-modules
\begin{align*}
\nu_\alpha&: \phi(\alpha)^* \CB_{X^I} \to \CB_{X^J}; \\
d_\alpha&:  j(\alpha)^* \left(\boxtimes_{j \in J} \CB_{X^{I_j}} \right) \to j(\alpha)^* \CB_{X^I}.
\end{align*}
\end{itemize}
These are required to satisfy the same compatibilities under compositions of the maps $\alpha$ as before; however, we require them to be isomorphisms only when $\alpha$ is surjective. We again require that the sheaves $\CB_{X^I}$ have no non-zero local sections supported on the diagonal divisor. Moreover, $\CB_{X^\emptyset}$ must be non-zero. 
\end{rmk}

From now on, all factorization algebras will be assumed to be unital, unless specified otherwise. Let $\FA(X)$ denote the category of (unital) factorization algebras on $X$. 

Let us summarize the relationships between these categories. 

\begin{thm} \label{Theorem: everybody else's theorem}
Let $X \subset \BBA^1$ be an open affine subset of $\BBA^1$, with a fixed global coordinate $x$. 
\begin{enumerate}
\item (Thm. 5.4 \cite{HL}.) The category of vertex algebras over $X$ is equivalent to the category of chiral algebras over $X$. 
\item The category of vertex algebras is equivalent to the category of translation-equivariant vertex algebras on $\BBA^1$, and hence to the category of translation-equivariant chiral algebras on $\BBA^1$.
\item (Thm. 3.4.9 \cite{BD}.) The category of chiral algebras on $X$ is equivalent to the category of factorization algebras on $X$. 
\end{enumerate}
\end{thm}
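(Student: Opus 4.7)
The plan is to treat each of the three parts in turn. Parts (1) and (3) are the content of [HL, Thm 5.4] and [BD, Thm 3.4.9] respectively, so the proof reduces to invoking these references after recalling the key constructions. For (1), the Huang--Lepowsky equivalence is built by expanding the chiral bracket $\mu^\ch$ in the fixed coordinate $x$ on $X$: a section of $\Delta_! \CA$ with pole of order $N$ along the diagonal can be Taylor-expanded as a formal series in $z = x_1 - x_2$, and the coefficients of this expansion supply the modes $v_n$ of the vertex operator $Y$. Conversely, the vertex operator reconstructs the chiral bracket as the map extracting the polar part along the diagonal. One then checks that the chiral Jacobi identity corresponds to locality, and that the $\Gamma(X, \CO_X)$-module structure corresponds to the right $\CD_X$-module structure on $\CA$. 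For (3), the Beilinson--Drinfeld equivalence is established by reconstructing the higher factorization modules $\CB_{X^I}$ inductively from the chiral bracket on $\CA = \CB_X$, using the factorization isomorphisms $d_\alpha$ away from the diagonal divisors and the $\nu_\alpha$ on them; conversely the chiral bracket is read off from $\CB_{X^2}$ via the short exact sequence that defines $\Delta_! \CA$.

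For part (2), I construct an explicit functor $\Xi^\VA : \VA \to \VA(\BBA^1)^{\BBA^1}$ and verify it is an equivalence. Given a vertex algebra $V$, set $\widetilde V = V \otimes_\BC \BC[x]$ with the $\BC[x]$-module structure given by multiplication on the second tensor factor, translation operator $\widetilde T = T \otimes 1 + 1 \otimes \frac{d}{dx}$, and vertex operator determined by the normalization $\widetilde Y(v \otimes 1, z)(u \otimes 1) = Y(v,z)u \otimes 1$ together with the module-compatibility axiom. The $\BBA^1$-action by translations is given on $\widetilde V$ by $a \cdot (v \otimes f(x)) = v \otimes f(x-a)$, and this intertwines the translation operator $\widetilde T$. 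In the opposite direction, given a translation-equivariant vertex algebra $W$ on $\BBA^1$, I take the fiber $W_0 = W/xW$ at the origin, which inherits a vertex algebra structure from that of $W$.

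The main obstacle is to check that $\widetilde V$ really satisfies the axioms of a vertex algebra over $\BBA^1$: in particular that the lower truncation and locality conditions survive the extension of scalars, and that the compatibility $\widetilde Y(f(x)\cdot v, z)(g(y)\cdot u) = f(x+z)g(x)\cdot \widetilde Y(v,z)u$ holds on the nose. Once this is in place, showing that $\Xi^\VA$ and the fiber functor are mutually inverse reduces to the observation that any translation-equivariant $\CO_{\BBA^1}$-module is free, reconstructible from its fiber at $0$ via base change along translations; the vertex algebra data transfers accordingly. Combined with (1) and (3), this yields the chain of equivalences in (2) including the promised equivalence with the category of translation-equivariant chiral algebras on $\BBA^1$.
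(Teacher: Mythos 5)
The paper does not actually prove this theorem: it is stated as a summary of known results, with an explicit remark immediately afterward deferring part (1) to [HL, Thm.\ 5.4] (and Section 19.2 of [FBZ]), part (2) to the appendix of [BDHK] (or as a corollary of part (1)), and part (3) to [BD, Thm.\ 3.4.9] (or [FG]). So your proposal goes beyond what the paper does, and the only meaningful comparison is with the cited literature and with the explicit descriptions the paper gives later when it needs them. On that score your sketch is consistent: your expansion of $\mu^\ch$ in $z = x_1 - x_2$ for part (1) matches the formula $v \otimes u \otimes (x_1-x_2)^{-n} \mapsto Y(v, x_1-x_2)u \otimes (x_1-x_2)^{-n}$ used in the proof of Theorem \ref{Theorem: AHS to chiral algebra}; your reconstruction of $\CB_{X^2}$ from the chiral bracket for part (3) matches the description $\CB_{X^2} = \Ker(\lambda^\ch)$ given before Theorem \ref{Theorem: composition}; and your $\widetilde V = V \otimes_\BC \BC[x]$ with $\widetilde T = T \otimes 1 + 1 \otimes \frac{d}{dx}$ is exactly the pattern of Proposition \ref{Prop: Sb to translation equivariant by tensoring}. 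Your direct construction for part (2) is a genuinely different route from the one the paper gestures at (deriving (2) as a corollary of (1) at the chiral-algebra level); the direct route is more elementary and makes the equivalence explicit, at the cost of having to verify all the vertex-algebra-over-$X$ axioms by hand, which you correctly identify as the main chore and which do go through since $f(x+z)$ is polynomial in $z$. Two small points to tighten: the paper (following [BDHK]) describes the inverse as taking \emph{invariant vectors} or the \emph{stalk} at $0$, whereas you take the \emph{fiber} $W/xW$; these agree because a translation-equivariant quasicoherent sheaf on $\BBA^1$ is free, but you should say which you mean and why they coincide. And the final clause of (2) (``hence to translation-equivariant chiral algebras'') requires not just (1) but the compatibility of the equivalence in (1) with pullback along the projection and action maps $p, a : \BBA^1 \times \BBA^1 \to \BBA^1$, so that it upgrades to an equivalence of the equivariant categories; this is routine but is an extra step beyond ``combine with (1).''
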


\begin{rmk}
\begin{enumerate}
\item Many of ideas in the proof of part (1) appear also in Section 19.2 of \cite{FBZ}.
\item Part (2) can be proved as a corollary of part (1), and has widely been accepted as true throughout the literature. However, an explicit description of the functors appears in the appendix of the recent paper \cite{BDHK}.
\item Part (3) is proved in greater generality (for schemes $X$ of arbitrary dimension $n$) in \cite{FG}; however, we do not need this level of generality. 
\item We will not comment on the proofs here---however, at certain stages in arguments in this paper, we will describe some of the equivalences above more explicitly, as needed. 
\end{enumerate}
\end{rmk}

\section{Background: Borcherds's category of \texorpdfstring{$(A, H, S)$}{(A,H,S)}-vertex algebras}\label{Section: Background, AHS}
In this section, we introduce the main objects of Borcherds's paper \cite{BorQVA}. Borcherds works in an arbitrary symmetric monoidal category $A$; for simplicity, we will restrict our attention to the case that $A = \Vect$, the category of vector spaces over $\BC$. We will also introduce new examples, along with geometric perspectives which will be useful in relating Borcherds's constructions to more geometric objects (chiral algebras and factorization algebras) in subsequent sections. 

\begin{notation}\label{Notation: finite sets with equivalence relation}
Recall from \ref{Notation: categories of finite sets} the category $\Fin$ of finite sets with arbitrary maps between them. We are also interested in the category $\FinNEq$ whose objects are finite sets equipped with an equivalence relations, and whose morphisms are functions between the finite sets which preserve \emph{inequivalence}: i.e., we consider only $\alpha: I \to J$ with the property that $\alpha(i_1) \sim \alpha(i_2)$ only if $i_1 \sim i_2$. 

We will consider $\Fin$ as a full subcategory of $\FinNEq$ by letting an ordinary finite set $I$ carry the degenerate equivalence relation, so all elements of $I$ are equivalent. 

Note that giving an equivalence relation on a finite set $I$ is equivalent to giving a surjection from $I$ to another set: we will sometimes write $\pi_I: I \surj I^\prime$ for this surjection, and identify $I^\prime$ with the set of equivalence classes of $I$. In the notation of \ref{Notation: categories of finite sets}, the surjection $\pi_I$ determines an open embedding $j(\pi_I): U(\pi_I) \emb X^I$; in the case that the equivalence relation is implicit, we may denote this open embedding by $j(I): U(I) \emb X^I$. Notice that the morphisms $\phi$ from $I$ to $J$ in $\FinNEq$ are exactly those functions $\alpha: I \to J$ such that the induced map $\phi(\alpha) : X^J \to X^I$ restricts to a map $U(J) \to U(I)$. We will denote this map by $\phi_U(\alpha)$. 
\end{notation}

Borcherds begins by considering the functor categories $\Fun(\Fin^*, A)$ and $\Fun((\Fin^*)^\op, A)$, where $* \in \{\emptyset, \not\equiv\}$. These categories each have a natural symmetric monoidal structure induced by the tensor product on $A$: for two functors $V_1, V_2$ and for $I \in \Fin^*$, we have
\begin{align*}
(V_1 \otimes V_2)(I) = V_1(I) \otimes_\BC V_2(I). 
\end{align*}

The next step in the construction involves choosing a coalgebra object in $\Fun((\Fin^*)^\op, A)$. Although the definitions make sense for general coalgebra objects, all our theorems and examples use the same coalgebra, which we will define now. 

\begin{defn}\label{Definition: coalgebra T}
Let $H=\BC[\del]$ be the polynomial algebra in one generator $\del$. It has a natural cocommutative coalgebra structure given by 
\begin{align*}
\Delta(\del) = \del \otimes 1 + 1 \otimes \del.
\end{align*}

Now define the functor $T$ by setting $T(I) = \otimes_{I} H \cong \BC[\del_i]_{i \in I}$. Given a morphism $\alpha: I \to J$, we use the comultiplication on $H$ to define a map
\begin{align*}
\alpha^*&: T(J) \to T(I)\\
 & \del_j \mapsto \Delta^{\alpha^{-1} (j)} (\del).
\end{align*}
(Here $\Delta^{\alpha^{-1} (j)}$ is the $(\vert \alpha^{-1} (j) \vert -1)$-fold composition of $\Delta$ with itself, taking values in $\BC[\del_i]_{i \in \alpha^{-1} (j)}$: that is, $\Delta^{\alpha^{-1} (j)} (\del) = \sum_{i \in \alpha^{-1}(j)} \del_i$.) It is not hard to show that this makes $T$ into a contravariant functor on $\Fin^*$. 

Finally, we define a comultiplication map $\Delta^T : T \to T \otimes T$ by using the tensor products of $\Delta: H \to H \otimes H$ with itself. Coassociativity of $\Delta$ ensures that $\Delta^T$ is indeed a natural transformation, and makes $T$ into a cocommutative coalgebra. 
\end{defn}

From now on, we will work only with this coalgebra object $T$. Borcherds next defines what it means for this coalgebra to \emph{act} on a covariant functor $V$.

\begin{defn}\label{Definition: T action}
Take $V \in \Fun(\Fin^*, A)$. An \emph{action} of $T$ on $V$ consists of a family of maps
\begin{align*}
\act_T(I):& T(I) \otimes V(I) \to V(I)\\
& t\otimes v \mapsto t.v
\end{align*}
satisfying the following compatibility condition: given any $\alpha: I \to J \in \Fin^*$, we have a commutative diagram
\begin{center}
\begin{tikzpicture}[>=angle 90] 
\matrix(b)[matrix of math nodes, row sep=2em, column sep=2em, text height=1.5ex, text depth=0.25ex]
{& T(J) \otimes V(I) & \\
 T(I) \otimes V(I) & & T(J) \otimes V(J) \\
 V(I) & & V(J). \\};
\path[->, font=\scriptsize]
 (b-1-2) edge node[above left]{$\alpha^* \otimes \id$} (b-2-1)
 (b-2-1) edge node[left]{$\act_T(I)$} (b-3-1)
 (b-3-1) edge node[below]{$\alpha_*^V$} (b-3-3)
 (b-1-2) edge node[above right]{$\id \otimes \alpha^V_*$} (b-2-3)
 (b-2-3) edge node[right]{$\act_T(J)$} (b-3-3) ;
\end{tikzpicture}
\end{center}
\end{defn}

We denote the category of functors equipped with $T$-action by $\Fun(\Fin^{*}, A, T)$. (Still, we can work in either $\Fin$, $\FinNEq$. We will begin to see the differences between these categories in the next steps.) We observe, following Borcherds, that the tensor product on $\Fun(\Fin^*, A)$ extends naturally to a tensor product $\otimes$ on $\Fun(\Fin^*, A, T)$. This uses the fact that $T$ is a coalgebra. We now fix a commutative algebra object (with unit) $S$ in the tensor category $\Fun(\FinNEq, A, T)$. The condition that $S$ has a unit is equivalent to requiring that $S(\emptyset)$ is a unital $\BC$-algebra.

\begin{eg}
This is the main example of an algebra object used by Borcherds in \cite{BorQVA}; for this reason we will denote it by $S_B$. For $I$ a finite set with an equivalence relation $\sim$, let 
\begin{align*}
S_B(I) = \BC[(x_i - x_j)^{\pm 1}]_{ i \not\sim j \in I}.
\end{align*}
Given $\alpha : I \to J \in \FinNEq$, we define
\begin{align*}
\alpha^{S_B}_* : S_B(I) \to S_B(J)
\end{align*}
by sending $x_i$ to $x_{\alpha(i)}$. Note that it is crucial that $\alpha$ be a morphism in $\FinNEq$ (i.e. be inequivalence-preserving) in order for this to give a well-defined map. Thus $S_B$ is an object of $\Fun(\FinNEq, A)$. 

The action of $T$ on $S_B$ is given by letting $\del_i \in T(I)= \BC[\del_i]_{i \in I}$ act on $S_B(I)$ by $\frac{d}{dx_i}$. The diagram in definition \ref{Definition: T action} commutes as an immediate consequence of the product rule, so $S_B$ is an object of $\Fun(\FinNEq, A, T)$.

Finally, to show that $S_B$ is a commutative algebra object in this category, we need to give a multiplication morphism $m: S_B \otimes S_B \to S_B$. On the level of a set $I$, $m_I: S_B(I) \otimes S_B(I) \to S_B(I)$ is just the ordinary algebra multiplication; it is easy to check from the definitions (and again using the product rule) that this is defines a natural transformation compatible with the action of $T$. Furthermore, it follows from the fact that the $S_B(I)$ themselves are commutative algebras that the map $m$ makes $S_B$ into a commutative algebra object. 
\end{eg} 

\begin{rmk}\label{Remark: restriction of Sb is trivial}
Note that we can consider $S_B$ as a commutative algebra object in $\Fun(\Fin, A, T)$, by restricting along the embedding $\Fin \emb \FinNEq$; however, in this case the functor is constant, always producing $\BC$, and the action of $T$ is always trivial. 
\end{rmk}

Before moving on, let us also introduce some more examples of algebra objects $S$, not studied by Borcherds. 

\begin{eg} \label{Example: S on A1}
For $I \in \FinNEq$, we set
\begin{align*}
\Sa(I) = \BC[x_i]_{i \in I} [ (x_i - x_j)^{-1}]_{i \not\sim j \in I}.
\end{align*}
The construction of the structure of $\Sa$ as a commutative algebra object in $\Fun(\FinNEq, A, T)$ is analogous to the case of $\Sb$. More precisely, the functions $\alpha_*^{\Sa}$ are induced by $x_i \mapsto x_{\alpha(i)}$, elements $\del_i \in T(I)$ act on $\Sa(I)$ by $\frac{d}{dx_i}$, and the algebra structure of each $\Sa(I)$ is compatible with these structures. 
\end{eg}

\begin{geom}
Let $X = \BBA^1$. Recall from \ref{Notation: finite sets with equivalence relation} that each $I \in \FinNEq$ gives an open subscheme $U(I)$ of $X^I$; note that $\Sa(I) = \Gamma(U(I), \CO_{X^I})$, and the action of $T(I)$ on $\Sa(I)$ is just that of the vector fields $\frac{d}{dx_i}$ on this ring of sections. Motivated by this, we generalize Example \ref{Example: S on A1} as follows.
\end{geom}

\begin{eg} \label{Example: S_X}
Let $X$ be an open affine subset of $\BBA^1$, with fixed global coordinate $x$, as in Section \ref{Subsec: notation}. 

For any $I \in \FinNEq$, the cartesian product $X^I$ is affine, and (because we are working over curves) the open set $U(I)$ is also affine. Let 
\begin{align*}
S_X(I) = \Gamma(U(I), \CO_{X^I}).
\end{align*}
For $\alpha: I \to J$, recall that we have $\phi_U(\alpha): U(J) \to U(I)$. The required map $\alpha^{S_X}_*: S_X(I) \to S_X(J)$ is just the induced map on global sections. 

The action of $T(I)$ on $S_X(I)$ is again by letting $\del_i$ act by the vector field $\frac{d}{dx_i}$ (using the fixed global coordinate $x_i$ on the $i$th factor of $X^I$), and the algebra structure is again the natural one on each ring of functions $S_X(I)$. The compatibility conditions follow as in the previous examples. 

Note that this notation is consistent with the notation introduced in Remark \ref{Remark: affine chiral algebras}.
\end{eg}

With these examples in mind as prototypes, let us move on with Borcherds's construction. He defines the category $\Fun(\FinNEq, A, T, S)$ of $S$-modules: objects are objects $V$ of $\Fun(\FinNEq, A, T)$ equipped with an action map 
\begin{align*}
\act_S: S \otimes V \to V,
\end{align*}
which is a morphism in  $\Fun(\FinNEq, A, T)$, and which satisfies the usual axioms of a ring action. 

\begin{defn}\label{Definition: ordinary tensor product over S}
We define a symmetric tensor product on $\Fun(\FinNEq, A, T, S)$ as follows: for $V, W \in \Fun(\FinNEq, A, T, S)$, the tensor product $V \otimes W$ is the functor given by 
\begin{align*}
(V \otimes W): I \mapsto V(I) \otimes_{S(I)} W(I).
\end{align*}
It is straightforward to give the rest of the structure of $V \otimes W$ as an object of $\Fun(\FinNEq, A, T, S)$, and to see that this makes $\Fun(\FinNEq, A, T, S)$ into a symmetric monoidal tensor category. 
\end{defn}

We will also define a second tensor product, which Borcherds refers to as the \emph{singular tensor product}. 

\begin{defn}\label{Definition: singular tensor product}
Let $V, W \in \Fun(\FinNEq, A, T, S)$. We define $V \odot W$ to be the unique object of $\Fun(\FinNEq, A, T, S)$ such that for any other $Z \in \Fun(\FinNEq, A, T, S)$ a morphism $\psi: V \odot W \to Z$ consists of
\begin{itemize}
\item for any $I_1, I_2$ in $\FinNEq$, a morphism $\psi_{I_1, I_2}: V(I_1) \to W(I_2) \to Z(I_1 \sqcup I_2)$ (here $I_1 \sqcup I_2$ has the equivalence relation given by the disjoint union of the equivalence relations on $I_1$ and $I_2$). 
\end{itemize}
This collection of maps is required to satisfy some natural compatibility conditions:
\begin{itemize}
\item For fixed $I_1$, $I_2$, $\psi_{I_1, I_2}$ should commute with the actions of $T(I_1)$ and $T(I_2)$, and with the actions of $S(I_1)$ and $S(I_2)$.
\item The $\psi$ should be functorial in $I_1$ and $I_2$. 
\end{itemize}
\end{defn}

\begin{rmk}\label{Remark: singular tensor product}
\begin{enumerate}
\item See the remarks following definition 3.10 of \cite{BorQVA} for a discussion of why this object is defined (i.e. representable). 
\item Borcherds also remarks that if we were to attempt to make this definition in the category $\Fun (\Fin, A, T, S)$, we would recover the ordinary tensor product. This amounts to the fact that disjoint union is a coproduct in $\Fin$, but not in $\FinNEq$. 
\item On the other hand, given an object $V$ of $\Fun (\Fin, A, T, S)$, we can extend it to an object $V \in \Fun(\FinNEq, A, T, S)$ by setting
\begin{align*}
V(I_1: \ldots : I_n) \defeq V(I_1, \ldots, I_n) \otimes_{S(I_1) \otimes \cdots \otimes S(I_n)} S(I_1:\ldots : I_n).
\end{align*}
(We adopt the notation $I_1: \ldots : I_n$ to indicate the set $I_1 \sqcup \ldots \sqcup I_n$ with equivalence classes $I_1, \ldots, I_n$, and $I_1, \ldots, I_n$ to indicate the set $I_1 \sqcup \ldots \sqcup I_n$ with all elements in the same equivalence class.) Here $S$ acts in the obvious way by multiplication on the right factor, while $\del_i \in T(I)$ acts by $\del_i \otimes \id + \id \otimes \frac{\del}{\del x_i}$. This gives a fully faithful embedding of $\Fun(\Fin, A, T, S)$ into $\Fun(\FinNEq, A, T, S)$. 

Borcherds observes that if we start with $V, W \in \Fun (\Fin, A, T, S)$, view them as objects of $\Fun(\FinNEq, A, T, S)$, and compute the singular tensor product in this category, the resulting object $V \odot W$ is no longer an object of $\Fun (\Fin, A, T, S)$. 
\end{enumerate}
\end{rmk}

Finally, we can come to the main definition of \cite{BorQVA}:
\begin{defn}[Definition 3.12, \cite{BorQVA}] \label{Definition: (A,H,S)-vertex algebra}
An \emph{$(A, H, S)$-vertex algebra} is a singular commutative ring object in $\Fun (\Fin, A, T, S)$.
\end{defn}

\begin{rmk} [Remark on notation]
Borcherds's notation reflects that he allows for more general additive symmetric monoidal categories $A$ than we use in this paper (we take always $A = \Vect$); likewise, his $H$ can be more general than our $H = \BC[\del]$ used in defining $T$ in definition \ref{Definition: coalgebra T}, and hence he considers slightly more general coalgebras than our $T$. We will still use this notation for consistency, even though for us the only choice is in the algebra object $S$. 
\end{rmk}

\begin{rmk} [Remark on unit condition] \label{Remark: units for AHS}
Fix a triple $(A,H,S)$. An $(A, H, S)$-vertex algebra $V$ is called \emph{unital} if $V(\varnothing)$ is a unital $S(\varnothing)$-algebra. Although Borcherds does not explicitly mention it, it seems likely that he intended all of his $(A,H,S)$-vertex algebras to be unital, and simply left out this axiom. (For example, this assumption is required for his theorems to be true.) It is a consequence of his axioms that $V(\varnothing)$ is an algebra over $S(\varnothing)$, but it is not necessary that it has a unit. On the other hand, a truly \emph{non-unital} $(A,H,S)$-vertex algebra, in the sense of non-unital factorization algebras, or vertex algebras without vacuum, should be a functor $V$ only defined over the category of non-empty finite sets and surjections, together with the rest of the data of an $(A, H,S)$-vertex algebra. 

Let us also remark that an $(A,H,S)$-vertex algebra is unital if and only if it is equipped with a map of $(A,H,S)$-vertex algebras
\begin{align*}
S \to V
\end{align*}
which satisfies the following compatibility condition relating singular-multiplication by the unit to the action of $S$ on $V$: take $I, J$ any finite sets; then the diagram below must commute. 
\end{rmk}

\begin{equation}\label{diagram: unit condition}
\begin{tikzpicture}[>=angle 90] 
\matrix(b)[matrix of math nodes, row sep=2em, column sep=2em, text height=1.5ex, text depth=0.25ex]
{S(I) \otimes V(J) & V(I) \otimes V(J) & V(I:J) \\
S(I, J) \otimes V(I, J) & & V(I, J)   \\};
\path[->, font=\scriptsize]
 (b-1-1) edge node[above]{unit} (b-1-2)
 (b-1-2) edge node[above]{$\mu$} (b-1-3)
 (b-1-1) edge (b-2-1)
 (b-2-1) edge node[below]{act}(b-2-3);
\path[right hook->, font=\scriptsize]
 (b-2-3) edge (b-1-3);
\end{tikzpicture}
\end{equation}
(Here again we require that $S(\varnothing)$ is unital, and $1 \in S(\varnothing)$ acts trivially on $V(\varnothing)$.) 

From now on, we will assume that all $(A, H, S)$-vertex algebras are unital. 

\begin{rmk} [Remark on morphisms] \label{Remark: morphisms}
Borcherds does not explicitly define morphisms between $(A, H, S)$-vertex algebras. For now, we will define the category $\VA(A, H, S)$ to have objects as in definition \ref{Definition: (A,H,S)-vertex algebra}, and morphisms natural transformations of the underlying functors which respect all of the additional structure. See Remark \ref{Remark: modify morphisms} for a discussion of why we may wish to modify the definition of a morphism. 
\end{rmk}

\begin{geom} \label{Remark: geometric interpretation of V}
In the case that $S = S_X$ as in examples \ref{Example: S on A1} and \ref{Example: S_X} above, we can interpret the data of an $(A, H, S)$-vertex algebra geometrically. Indeed, let $V$ be an $(A, H, S_X)$-vertex algebra. Recall that $X^I$ is affine for any finite set $I$, and let $\CV(I)$ denote the $\CO_{X^I}$-module with global sections given by $V(I)$. The generators of $T(I) = \BC[\del_i]_{i \in I}$ act on $V(I)$ by derivations; together with the action of $\Sgen{X}(I)$, this generates a left $\CD_{X^I}$-module structure on $\CV(I)$.

Given any $\alpha: I \to J$, we have $\alpha_*^V: V(I) \to V(J)$. It is straightforward to check from the axioms that this descends to a map 
\begin{align}
S_X(J) \otimes_{S_X(I)} V(I) \to V(J), \label{eq: pullback}
\end{align}
which is compatible with both the $S_X(J)$-action on both sides, and the ``action'' of $T(J)$ (given on the right hand side from the definitions, and given on the left hand side by letting $\del_j$ act by $\del_j \otimes \id + \id \otimes \alpha^*(\del_j)$). To translate this into the $\CD$-module picture, we consider the morphism $\phi(\alpha) : X^J \to X^I$. Then the map (\ref{eq: pullback}) is the morphism on global sections induced by a map of left $\CD_{X^J}$-modules
\begin{align*}
\widetilde{\alpha}: \phi(\alpha)^*(\CV(I)) \to \CV(J).
\end{align*}
These maps are compatible with compositions $I \to J \to K$ in the obvious way. 

For $I \in \FinNEq$, let $\overline{I}$ denote the underlying finite set (i.e. the result of forgetting the equivalence relation). The data of the $V(I)$ for $(\pi_I : I \surj I^\prime) \in \FinNEq$ simply encodes the restriction of $\CV(\overline{I})$ to $U(I) \subset X^I$. Likewise, a function $\alpha: I \to J$ of finite sets is a morphism in $\FinNEq$ if and only if the induced morphism $\phi(\alpha): X^J \to X^I$ sends $U(J)$ to $U(I)$. Then the data of $\alpha_*: V(I) \to V(J)$ encodes the restriction of $\widetilde{\alpha}$ to $U(J)$. 
\end{geom}

Borcherds's main result is the following:
\begin{thm}\cite{BorQVA}\label{Theorem: Borcherds}
Given an $(A, H, S_B)$-vertex algebra $V$, the vector space $V(1)$ has a natural structure of vertex algebra. 
\end{thm}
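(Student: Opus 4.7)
The plan is to build vertex algebra structure on the vector space $V(\{1\})$ out of the functorial, $T$-equivariant, and singular-ring data carried by $V$, then verify the axioms of Definition~\ref{Def: vertex algebra}.

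First I would define the elementary pieces. The underlying vector space is $V(\{1\})$. The vacuum $\vac$ is the image of $1 \in S_B(\varnothing) = \BC$ under the composition $S_B(\varnothing) \to V(\varnothing) \xrightarrow{\iota_*} V(\{1\})$, where the first map is the unit of Remark~\ref{Remark: units for AHS} and $\iota_*$ is the pushforward along the unique morphism $\iota\colon \varnothing \to \{1\}$. The translation operator $T$ is the action of $\del_1 \in T(\{1\}) = \BC[\del_1]$ on $V(\{1\})$.

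The vertex operator $Y$ is the substantive part. Its starting point is the singular multiplication at $I_1 = \{1\}$, $I_2 = \{2\}$,
\[
\mu_{\{1\},\{2\}}\colon V(\{1\}) \otimes V(\{2\}) \to V(\{1\}:\{2\}),
\]
combined with the canonical identification $V(\{2\}) \cong V(\{1\})$, which produces an element $\mu(v,u) \in V(\{1\}:\{2\})$ for each pair $(v,u)$. The target carries actions of $S_B(\{1\}:\{2\}) = \BC[(x_1-x_2)^{\pm 1}]$ and of $T(\{1\}:\{2\}) = \BC[\del_1, \del_2]$. The goal is to encode $\mu(v,u)$ as a Laurent series $\sum_{n \in \BZ} v_n(u)\,z^{-n-1} \in V(\{1\})[\![z^{\pm 1}]\!]$, with $z$ playing the role of $x_1 - x_2$: the $S_B$-action captures the polar part of the expansion (and, by finiteness of the denominators present in any given element, yields lower truncation), while the $T(\{1\}:\{2\})$-action together with functoriality specializes the remaining data to $V(\{1\})$ via a formal Taylor expansion along the diagonal.

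With $Y$ in hand, each axiom will reflect an intrinsic property of $V$: the vacuum and creation conditions follow from the unit compatibility diagram~\eqref{diagram: unit condition}; the derivative property is immediate from $T$-equivariance of $\mu$ combined with the definition of $T$ on $V(\{1\})$; and locality follows from the (singular) commutativity of $V$, which identifies $\mu(v,u)$ and $\mu(u,v)$ in $V(\{1\}:\{2\})$ under the swap $x_1 \leftrightarrow x_2$, thereby absorbing the $(z-w)^N$ factors into the $S_B$-module structure (and, more precisely, into the threefold analogue $V(\{1\}:\{2\}:\{3\})$ needed to encode $Y(v,z)Y(u,w)a$). The main obstacle I anticipate is precisely the Laurent-expansion step, namely rigorously extracting a $V(\{1\})$-valued formal series from an abstract $\BC[(x_1-x_2)^{\pm 1}]$-module $V(\{1\}:\{2\})$: there is no morphism $\{1\}:\{2\} \to \{1\}$ in $\FinNEq$, so one cannot invoke functoriality directly, and the reduction must exploit the special feature that $S_B$ is constant on $\Fin \subset \FinNEq$ (Remark~\ref{Remark: restriction of Sb is trivial}), which is exactly what ties Borcherds's original framework to the one-variable formal calculus of ordinary vertex algebras.
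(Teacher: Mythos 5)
Your proposal follows essentially the same route as the paper's proof of the generalization (Theorem \ref{Theorem: AHS to vertex algebra}, which specializes to Borcherds's argument when $S=S_B$): vacuum from the unit diagram, $T=\del$, and $Y(v,z)u$ obtained by clearing the pole of $\mu_{12}(v\otimes u)$ so as to land in $V(1,2)$ (possible precisely because $S_B$ restricted to $\Fin$ is the constant functor $\BC$, so $V(1{:}2)=V(1,2)\otimes_\BC\BC[(x_1-x_2)^{\pm1}]$) and then applying the Taylor-expansion map $a\mapsto\sum_{i}\alpha^V_*(\del_1^{(i)}a)\,z^{i}$ along $\alpha:\{1,2\}\to\{1\}$ --- which is exactly the resolution of the ``main obstacle'' you flag at the end. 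The only point your sketch underplays is that locality requires associativity of $\mu$ as well as commutativity (one must regroup $\mu(u_1\otimes\mu(u_2\otimes u_3))$ as $\mu(\mu(u_1\otimes u_2)\otimes u_3)$ before swapping $u_1$ and $u_2$), together with the verification that the iterated operators $Y(u_1,z_1)Y(u_2,z_2)u_3$ agree with the single composite through $V(1{:}2{:}3)$ (Lemma \ref{lemma: U and Y agree} in the paper).
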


In fact, it is not hard to see from the proof that this extends to a functor $\VA(A, H, S_B) \to \VA$, which we will denote by $\Phi$. Borcherds gives an instance of this theorem by constructing an $(A, H, S_B)$-vertex algebra $\BV^L$ for which $\Phi(\BV^L)$ is the lattice vertex algebra $V_L$. 

\begin{eg}\label{Example: lattice AHS}
Let $L$ be an even integral lattice of rank $n$, with bilinear form $(\cdot, \cdot)$ as in example \ref{Example: lattice VOA}. Let $V^L$ be the commutative algebra
\begin{align*}
V^L \defeq \BC[L] \otimes \Sym(L(1) \oplus L(2) \oplus \cdots \oplus L(n) \oplus \cdots ).
\end{align*}
(Here we have the symmetric algebra on the direct sum of countably many copies of $L$, denoted by $L(1), L(2)$, etc.) For future reference, let us fix a basis $\{\lambda_a\}_{a \in A}$ of $L$, and let us denote the generators of this algebra by $e^\lambda \in \BC[L]$ (for $\lambda \in L$), and by $\lambda_a(k) \in L(k)$ (for $a \in A$ and $k \in \BN$). We define an action of $\BC[\del]$ on $V^L$ by setting 
\begin{align*}
\del(e^{\lambda_a}) &= \lambda_a(1);\\
\del(\lambda_a(k)) &= (k+1)\lambda_a(k+1),
\end{align*}
and requiring that $\del$ satisfies the Leibniz rule. In particular, denoting $\frac{1}{k!} \del^k$ by $\del^{(k)}$, we have $\lambda_a(k) = \del^{(k)} e^{\lambda_a}$. In addition to the obvious algebra structure, $V^L$ has a coalgebra structure compatible with the algebra structure and the action of $\del$ determined by requiring that 
\begin{align*}
\Delta(e^\lambda) &= e^\lambda \otimes e^\lambda \text{ for any } \lambda \in L;\\
\Delta(v_1 v_2) &= \Delta(v_1)\cdot \Delta(v_2) \text{ for any } v_1, v_2 \in V^L;\\
\Delta(\del v) &= (\del \otimes 1 + 1 \otimes \del) \Delta(v) \text{ for any } v \in V^L. 
\end{align*}

Define $\BV^L : \Fin \to \CA$ by $\BV^L(I) \defeq \otimes_{I} V^L$. Given a morphism $\alpha: I \to J$, we define $\BV^L(I) \to \BV^L(J)$ by $(v_i)_{i \in I} \mapsto (\prod_{i \in \alpha^{-1}(j)} v_i)_{j \in J}$. Then we can define the action of $T(I)=\BC[\del_i]_{i \in I}$ on $\BV^L(I)$ by letting $\del_i$ act on the $i$th factor $V^L$. 

The action of $S_B$ is given by the natural action of $S_B(I_1: \ldots : I_n)$ on $\BV^L (I_1: \ldots : I_n) = \BV^L(I_1, \ldots, I_n) \otimes_\BC S_B(I_1: \ldots: I_n)$. 

It remains to define the singular multiplication $\mu$ on $\BV^L$. This should be a map $\BV^L \odot \BV^L \to \BV^L$, or in other words a collection of compatible maps
\begin{align*}
\mu_{I_1, I_2}: \BV^L(I_1) \otimes \BV^L(I_2) \to \BV^L(I_1, I_2) \otimes_{\BC} \BC[(x_{i_1} - x_{i_2})^{\pm 1} ]_{i_1 \in I_1, i_2 \in I_2}. 
\end{align*}
We define these maps in several steps. Fix a two-cocycle $c: L \times L \to \{\pm1\}$ as in example \ref{Example: lattice VOA}. Now define what Borcherds calls a \emph{bicharacter} $r: V^L \otimes V^L \to \BC[(x_1 - x_2)^{\pm 1}]$ as follows. For $\alpha, \beta \in L$, set $r(e^\alpha, e^\beta) = c(\alpha, \beta) (x_1 - x_2)^{(\alpha, \beta)}$. Note that since $c$ satifies $c(\alpha, \beta) = (-1)^{(\alpha, \beta)}c(\beta, \alpha)$, we have $r(e^\beta, e^\alpha) = c(\alpha, \beta) (x_2 - x_1)^{(\alpha, \beta)}$. The value of $r$ on any element of $V^L \otimes V^L$ is completely determined by the requirement that $r$ is bilinear and satisfies\footnote{More generally, these are the axioms given by Borcherds for a \emph{(symmetric) bicharacter} from an arbitrary commutative cocommutative bialgebra into $\BC[(x_1 - x_2)^{\pm 1}]$; he uses this general set-up to produce other examples of $(A, H, S_B)$-vertex algebras, but this example is sufficient for our purposes.}
\begin{itemize}
\item $\text{if } r(v \otimes v) = f(x_1, x_2), \text{ then } r(w \otimes v) = f(x_2, x_1) \text{ for } v, w, \in V^L;$
\item $r(\del v \otimes w) = \frac{\del}{\del x_1} r(v \otimes w) \text{ for } v, w \in V^L;$
\item $r(v_1v_2 \otimes w) = r(v_1, w_{(1)})r(v_2, w_{(2)}), \text{ for } v_1, v_2, w \in V^L, \text{ where } \Delta(w) = w_{(1)} \otimes w_{(2)}.$
\end{itemize}
Borcherds explains (Lemma 4.1, \cite{BorQVA}) that this bicharacter can be extended to give maps
\begin{align*}
r:  \BV^L(I_1) \otimes \BV^L(I_2) \to S_B(I_1: I_2),
\end{align*}
for any pair $I_1, I_2$, using the coalgebra structure of $V^L$ and the algebra structure of $S_B(I_1: I_2)$. With this in hand, we define $\mu_{I_1, I_2}$ as follows: given $v \in \BV^L(I_1)$ and $w \in \BV^L(I_2)$, write $\Delta(v) = v_{(1)} \otimes v_{(2)} \in \BV^L(I_1) \otimes \BV^L(I_1)$, and likewise write $\Delta(w) = w_{(1)} \otimes w_{(2)} \in \BV^L(I_2) \otimes \BV^L(I_2)$. Then we define
\begin{align} \label{Eq: definition of mu}
\mu_{I_1, I_2} (v \otimes w) \defeq (v_{(1)} \otimes w_{(1)}) \otimes r(v_{(2)} \otimes w_{(2)}). 
\end{align}
It is not hard to check that this gives a compatible family of maps, which in turn gives a commutative multiplication map $\BV^L \odot \BV^L \to \BV^L$. 

Although we will not explain here why the vertex algebra structure on $\Phi(\BV^L)$ matches that of the lattice vertex algebra $V_L$ (see Example \ref{Example: lattice VOA}), let us at least explain how the underlying vector spaces $\BV^L(1) = V^L$ and $V_L$ are identified, by defining the isomorphism
\begin{align}\label{Eq: identifying V^L and V_L}
f: V^L \to V_L.
\end{align}
In fact, both $V^L$ and $V_L$ are commutative algebras with derivation (given by the action of $\partial$ on $V^L$ and the action of $T$ on $V_L$, respectively). Moreover, each is generated under multiplication and the derivation by elements parametrized by $L$: $\{e^\lambda\}_{\lambda \in L}$ in the case of $V^L$ and $\{ | \lambda \rangle\}_{\lambda \in L}$ in the case of $V_L$. The map $f$ is the homomorphism of commutative algebras with derivation given by $f(e^\lambda) = | \lambda \rangle$. 

As an example, we can calculate that for a basis element $\lambda_a$, $f(\lambda_a(1)) = f(\partial e^{\lambda_a} ) = T | \lambda_a \rangle = (\lambda_a)_{-1} | \lambda_a \rangle$. 
\end{eg}

\begin{rmk}[Remark on bicharacter constructions] \label{Remark: bicharacter}
In \cite{Patnaik}, Patnaik proves carefully that $\Phi(\BV^L)$ is indeed the lattice vertex algebra, as Borcherds's claims. He also defines and characterizes the class of \emph{$r$-vertex algebras}, vertex algebras which arise as objects in the image of $\Phi$ of $(A,H,S)$-vertex algebras which are constructed in a way similar to the above method, namely beginning with a commutative cocommutative bialgebra and a symmetric bicharacter. In particular, not all vertex algebras arise in this way; this result is different from the results of the current paper, which concern the application of $\Phi$ (or $\Phi_X$) to \emph{any} $(A, H, S)$-vertex algebra. 
\end{rmk}

\begin{rmk}[Remark on commutativity condition] \label{Remark: commutativity}
Let us describe more explicitly what it means for a singular multiplicative structure $\mu$ on an object $V \in \Fun(\Fin, \CA, T, S)$ to be \emph{commutative}. In terms of the components $\mu_{I_1, I_2}$, the condition simply amounts to the commutativity of the following diagrams:
\begin{center}
\begin{tikzpicture}[>=angle 90] 
\matrix(b)[matrix of math nodes, row sep=2em, column sep=2em, text height=1.5ex, text depth=0.25ex]
{ V(I_1) \otimes V(I_2) & V(I_1: I_2) \\
 V(I_2) \otimes V(I_1) & V(I_2: I_1)\\};
\path[->, font=\scriptsize]
(b-1-1) edge node[above]{$\mu_{I_1, I_2}$} (b-1-2)
(b-1-1) edge node[left]{$\alpha \otimes \beta \mapsto \beta \otimes \alpha$} (b-2-1)
(b-1-2) edge node[right]{$\id$}(b-2-2)
(b-2-1) edge node[below]{$\mu_{I_1, I_2}$} (b-2-2); 
\end{tikzpicture}
\end{center}
However, when we are dealing with explicit examples, say $I_1$ and $I_2$ each consisting of one point, there is a risk of notational confusion. Rather than labelling the element of $I_1$ by ``$1$'' and the element of $I_2$ by ``$2$'', we do not specify the labels, and reserve the notation $x_1$ to refer to the first label appearing on the map $\mu$---i.e. to $I_1$ in the first row of the diagram, and to $I_2$ in the second row---while $x_2$ refers to the second label. We do this for consistency with notation of \cite{BorQVA} and for ease of translation to the (ordinary) vertex algebra setting. Thus in this case, to express the commutativity of the above diagram, we use the transposition $\sigma: \{1,2\} \to \{2,1\}$, and we see that 
\begin{align}\label{Eq: commutativity condition}
\mu_{12}(\alpha \otimes \beta) = \sigma^V_* \mu_{1,2}(\beta \otimes \alpha). 
\end{align}
The interested reader can check that this formula holds in the example of the lattice $(A, H, S)$-vertex algebra $\BV^L$. 
\end{rmk}

\begin{rmk}[Generalization to families]\label{rmk: AHS in families}
Borcherds's definitions work for a general commutative ring $R$, not just over $\BC$. In particular, consider the case when $R = \BC[t]$ and $X$ is an affine curve in $\BBA^1_R$, viewed as a scheme over $\Spec R = \BBA^1$; the case we will use later on is the special setting of $X = \BBA^1 \times \BBA^1$, mapping to $\Spec R$ via the first projection.  The theory is entirely analogous to the complex setting; let us mention only a few differences. 
\begin{itemize}
\item We consider functors from $\Fin^*$ to the category of $\BC[t]$ modules (our new category $A$). 
\item The coalgebra object $T$ is as before (with $\BC[t]$-module structure given by $t\mapsto 0$). 
\item The algebra object $S_X$ comes from considering functions on the iterated fibre products $X^I_{\BBA^1} \defeq X \times _{\BBA^1} X \times _{\BBA^1} \cdots \times_{\BBA^1} X$ and their open subschemes $U_{\BBA^1}(I)$ defined analogously to the non-relative setting. (In the case $X = \BBA^1 \times \BBA^1$ mentioned above, $X^I_{\BBA^1} \cong X \times X^I $ and $U_{\BBA^1}(I) \cong X \times U(I)$.)
\item The structure given by the action of $T$ and of $S$ makes $V(I)$ into a relative $\CD$-module on $X^I_{\BBA^1}$ over $\BBA^1$. 
\item The naive tensor product is defined as before, by tensoring over $S_X(I)$. 
\item The singular tensor product is defined by considering compatible families of maps on tensor products of modules over $\BC[t]$ (rather than over $\BC$). 
\end{itemize}
We will call such an $(A,H,S)$-vertex algebra a \emph{family of $(A,H,S)$-vertex algebras over $\BBA^1$}. 
\end{rmk}

\section{From \texorpdfstring{$(A,H,S_X)$}{(A,H,S)}-vertex algebras to vertex algebras on \texorpdfstring{$X$}{X}}\label{Section: from AHS to vertex algebras}
In this section, we generalize Borcherds's result (Theorem \ref{Theorem: Borcherds}) to deal with our other class of algebra objects $S$: namely, we return to the setting of $X \subset \BBA^1$ open affine, with a fixed global coordinate $x$. 

\begin{thm}\label{Theorem: AHS to vertex algebra}
For $X$ as above, and $S_X$ the algebra object defined in example \ref{Example: S_X}, we have a functor
\begin{align*}
\Phi_X : \VA(A, H, S_X) \to \VA(X). 
\end{align*}
\end{thm}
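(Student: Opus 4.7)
The plan is to extend Borcherds's construction (Theorem \ref{Theorem: Borcherds}) from $S_B$ to $S_X$, producing not just an ordinary vertex algebra but one over $X$, since $S_X$ now carries the $\Gamma(X, \CO_X)$-module data that distinguishes the two notions. Given $V \in \VA(A, H, S_X)$, I would set the underlying vector space of $\Phi_X(V)$ to be $V(1)$, with $\Gamma(X, \CO_X) = S_X(1)$-action coming from the $S_X$-module structure, translation operator $T$ given by the action of $\partial_1 \in T(\{1\})$, and vacuum given by the image of $1$ under the unit map $S_X(1) \to V(1)$ (available by the unital convention of Remark \ref{Remark: units for AHS}).

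The crucial step is the definition of the state--field correspondence $Y$. Starting from the singular multiplication $\mu_{1,2}: V(1) \otimes V(1) \to V(1:2)$, where $V(1:2) = V(1,2) \otimes_{S_X(1,2)} S_X(1:2)$ is the localization of $V(1,2)$ along the divisor $(x_1 - x_2)$, I would write $\mu_{1,2}(v \otimes w) = (x_1 - x_2)^{-N} g$ for some $g \in V(1,2)$ and $N \geq 0$, and then expand $g$ as a formal Taylor series in $(x_2 - x_1)$ using the $\partial_2$-action, pushing forward to $V(1)$ along the diagonal surjection $\alpha: \{1,2\} \surj \{1\}$. The coefficient of $z^k$ in $Y(v, z) w$ then involves $\alpha^V_*(\partial_2^n g) \in V(1)$ for appropriate $n$. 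Geometrically (cf.\ Geom remark \ref{Remark: geometric interpretation of V}), this corresponds to using the flat connection of the left $\CD$-module $\CV(1,2)$ to trivialize it over the formal neighborhood of the diagonal in $X^2$ and reading off the Laurent expansion.

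With $Y$ in hand, each axiom for a vertex algebra over $X$ translates to an axiom of $(A, H, S_X)$-vertex algebras: lower truncation from the single localization defining $V(1:2)$; the vacuum and creation properties from the unit compatibility of diagram \ref{diagram: unit condition}; the derivative property from the Leibniz compatibility of $\mu$ with the $T$-action; and the two extra $\Gamma(X, \CO_X)$-compatibilities required for a vertex algebra on $X$ from the $S_X$-linearity of $\mu$ and of the $T$-action. Functoriality in $V$ is then essentially automatic: a morphism $\phi: V \to W$ restricts to $\phi_1: V(1) \to W(1)$ and respects every piece of structure above by construction.

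The main obstacle is locality, which must be extracted from the singular commutativity of $\mu$ (equation \ref{Eq: commutativity condition}). The plan is to realize the iterated compositions $Y(v, z) Y(u, w) a$ and $Y(u, w) Y(v, z) a$ as two different expansions of a single section living in $V(1:2:3)$, produced by an iterated singular multiplication at three points. Associativity of $\mu$ produces the underlying section on $U(\{1,2,3\}) \subset X^3$, while commutativity identifies the two resulting expressions up to the relabeling of the first two inputs; the two orderings differ only in their regions of expansion around different diagonal components. Clearing a sufficiently large power of $(z - w)$ removes the remaining poles along $x_1 = x_2$ and converts the identity into the locality statement as an equality of elements of $V[\![z^{\pm 1}, w^{\pm 1}]\!]$.
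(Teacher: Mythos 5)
Your overall strategy is the same as the paper's: take $V(1)$ with its $S_X(1)$-module structure, vacuum from the unit map, $T=\del$, define $Y$ by Taylor-expanding the singular product $\mu_{1,2}(v\otimes u)\in V(1:2)$ along the diagonal after clearing the pole in $(x_1-x_2)$, and deduce locality from associativity and commutativity of $\mu$ by realizing both iterated products as expansions of a single triple product in $V(1:2:3)$ (this is precisely the content of the paper's Lemma \ref{lemma: U and Y agree} and the proof of Proposition \ref{Proposition: locality holds}).

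There is, however, one step that fails as written: the direction of the Taylor expansion. You expand $g=(x_1-x_2)^N\mu_{1,2}(v\otimes u)$ in powers of $(x_2-x_1)$ using the $\del_2$-action, i.e.\ you read off coefficients $\alpha^V_*(\del_2^{(n)}g)$; the paper uses $\alpha^V_*(\del_1^{(n)}g)$ paired with powers of $z=x_1-x_2$ (its map $\theta$ followed by $z_2\mapsto 0$). These are not equivalent, and your choice breaks the axioms. Concretely, for $\gamma:\{2\}\hookrightarrow\{1,2\}$ and $\delta:\{1\}\hookrightarrow\{1,2\}$ the $T$-compatibility gives $\del_1\,\gamma^V_*=0$ and $\del_2\,\delta^V_*=0$, while the unit diagram gives $\mu(\vac\otimes u)=\gamma^V_*u$ and $\mu(v\otimes\vac)=\delta^V_*v$. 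With the $\del_1$-expansion this yields $Y(\vac,z)=\Id$ and $Y(v,z)\vac=\sum_i\del^{(i)}v\,z^i=v+Tv\,z+\cdots$, as required. With your $\del_2$-expansion the two computations are interchanged: $Y(\vac,z)u=\sum_i\del^{(i)}u\,(\pm z)^i\neq u$, so the vacuum axiom fails, while $Y(v,z)\vac=v$ exactly, which together with the derivative property would force $Tv=0$. The same slip reverses the $\Gamma(X,\CO_X)$-linearity: you would obtain $f(x)$ acting on the first argument and a shift on the second, rather than $Y(f(x)v,z)(g(y)u)=f(x+z)g(x)Y(v,z)u$. The fix is simply to expand in the first variable (equivalently, use the symmetric two-variable expansion and set the second formal variable to zero); with that correction the rest of your outline goes through as in the paper.
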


\begin{proof}
For $\BV \in \VA(A, H, S_X)$, we will define a structure of vertex algebra over $X$ on the vector space $V = V(1)$. (For convenience, we will identify $V$ with $V(I)$ for any singleton set $I$, using the isomorphism induced by the unique isomorphism $I \to \{1\}$.)

The key idea in the construction is the generalization of Borcherds's ``Taylor series expansion function'' to the $\Sgen{X}$ setting. More precisely, we consider the map $\alpha_*^V: V(1,2) \to V(1)$ corresponding to $\alpha: \{1,2\} \to \{1\}$, and define
\begin{align*}
\theta: V(1,2) & \to V(1) \otimes_\BC \BC [\![ z_1,z_2 ]\!]\\
v & \mapsto \sum_{i, j \geq 1} \alpha_*^V(\del_1^{(i)} \del_2^{(j)} v) \otimes z_1^i z_2^j.
\end{align*}
Observe that $\theta$ is compatible with the action of $x_1, x_2 \in S_X(1,2)$ on $V(1,2)$ in the following way: 
\begin{align}
\theta (x_1.v) &= (x \otimes 1 + 1 \otimes z_1).\theta (v), \label{eq: theta} \\
\theta (x_2.v) &=(x \otimes 1 + 1 \otimes z_2).\theta (v). \nonumber
\end{align}
In particular, $\theta$ is linear over $\BC[(x_1 - x_2)]$, where $x_1 - x_2$ acts on the right by $z_1-z_2$. It follows that $\theta$ extends to a map on the localization at $(x_1 - x_2)$: 
\begin{align*}
\theta: V(1,2) \otimes_{S_X(1,2)} S_X(1:2) \to V(1)\otimes_{\BC}\BC[\![z_1,z_2]\!][(z_1-z_2)^{-1}].
\end{align*}

From the singular multiplication $\mu$ on $\BV$, we have in particular a map
\begin{align*}
\mu_{12}: V(1) \otimes V(2) \to V(1,2) \otimes_{S_X(1,2)} S_X(1:2).
\end{align*}
For $v, u \in V$ we define $Y(v, z_1)u \in V(1) \otimes_{\BC}\BC[\![z_1]\!][z_1^{-1}]$ by 
\begin{align*}
Y(v,z_1)u \defeq \theta(\mu(v\otimes u))_{\vert z_2=0}.
\end{align*}

More explicitly, choose $N$ large enough that $(x_1 - x_2)^N \mu_{12}(v \otimes u)\defeq a \in V(1,2)$. Then 
\begin{align}\label{Eq: Y}
Y(v,z_1)u = \sum_{i\ge 0} \alpha^V_*(\del_1^{(i)} a) \otimes z_1^{i-N}.
\end{align}

We claim that $Y(\cdot, z)(\cdot)$ makes $V$ into a vertex algebra on $X$, together with the data of vacuum vector $\vac$ given by the image of $1 \in S_X(1)$ under the unit morphism $S_X(1) \to V(1)$, and of translation operator given by $T = \del(\cdot): V \to V$. Let us check the axioms.

First notice that the lower truncation property is immediate from the construction of $Y$. It is also immediate from the properties of an $(A, H, S_X)$-vertex algebra that $V$ is a $\Gamma(X, \CO_X)$-module. It easy to check from the linearity of $\mu$ and the properties (\ref{eq: theta}) of $\theta$ that we have
\begin{align*}
Y(xv,z)u & = (x+z) Y(v,z)u;\\
Y(v,z)(xu) & = xY(v,z)u,
\end{align*}
which are the $\CO(X)$-linearity properties required. The derivative property is also straightforward to check. 

Next let us check the vacuum axiom and the creation axiom. Since $\del(1) = 0$, we also have $\del(\vac) = 0$. For any other $v \in V$, the diagram (\ref{diagram: unit condition}) tells us that $\mu(\vac \otimes v) = \gamma^V_*v \in V(1,2)$, where $\gamma$ is the inclusion $\{2\} \hookrightarrow \{1,2\}$. Then from (\ref{Eq: Y}) we see that
\begin{align*}
Y(\vac, z) v &= \sum_{i \ge 0} \alpha^V_*(\del_1^{(i)} (\mu (\vac \otimes v)) \otimes z^i \\
 & = \sum _{i \ge 0} \alpha^V_*(\mu( T^{(i)}\vac \otimes v)) \otimes z^i\\
 & = \alpha^V_*(\mu(\vac \otimes v))\\
 & = \alpha^V_* \gamma^V_* v = v.
\end{align*}
Thus the vacuum condition holds. 

Likewise, $\mu(\vac \otimes v) = \delta^V_*v \in V(1,2)$, for $\delta: \{1\} \hookrightarrow \{1,2\}$, so 
\begin{align*}
Y(v, z) \vac &= \sum_{i \ge 0} \alpha^V_* (\del_1^{(i)} (\mu (v \otimes \vac) )\otimes z^i\\
 & = \sum_{i \ge 0} \alpha^V_*(\mu (\del^{(i)}v \otimes \vac))  \otimes z^i\\
 & = v + Tv \otimes z + \cdots.
 \end{align*}
 This proves that the creation property is satisfied. 
 
 Finally, we claim that the following proposition holds:
 \begin{prop} \label{Proposition: locality holds}
 The locality axiom is satisfied. In other words, for any three elements $u_1, u_2, u_3 \in V$ there exists some large integer $N \gg 0$ such that
\begin{align*}
(z_1 - z_2)^N Y(u_1, z_1) Y(u_2, z_2) u_3 = (z_1 - z_2)^N Y(u_2, z_2) Y(u_1, z_1) u_3.
\end{align*}
 \end{prop}
 
 We defer the proof of this proposition for now; in the mean time, assuming the proposition, we see that we have indeed constructed the data of a vertex algebra $(V, \vac, T, Y(\cdot, z))$ over $X$. 
 
 It is also easy to see from the constructions that a morphism $F: \BV \to \BW$ of $(A, H, S_X)$-vertex algebras (in the sense of remark \ref{Remark: morphisms}) gives rise to a linear map $F(1) : V(1) \to W(1)$ which respects all of the vertex algebra structure just described. This describes the behaviour of the functor $\Phi_X$ on morphisms, and completes the proof of the theorem.
\end{proof}

The remainder of this section is devoted to the proof of Proposition \ref{Proposition: locality holds}. For $u_1, u_2, u_3 \in V$, we need to show that the two elements $ Y(u_1, z_1) Y(u_2, z_2) u_3$ and  $Y(u_2, z_2) Y(u_1, z_1) u_3$ of $V(1) \otimes \BC[\![ z_1, z_2 ]\!] [z_1^{-1}, z_2 ^{-1}]$ agree when viewed as elements of $V(1) \otimes \BC[\![ z_1, z_2 ]\!] [z_1^{-1}, z_2 ^{-1}, (z_1 - z_2)^{-1}]$. Following Borcherds's proof in the setting of $(A, H, S_B)$-vertex algebras, we will introduce an intermediate function that will allow us to see how the associativity and commutativity of the singular multiplication map $\mu$ imply our result. 

For this, we use a generalization of the function $\theta$ to three variables: let $\beta$ be the function $\{1,2,3\} \to \{1\}$, and define
\begin{align*}
\theta_{123} &: V(1:2:3) \to V(1) \otimes \BC[\![ z_1, z_2, z_3 ]\!] [(z_1 - z_2) ^{-1} , (z_1 - z_3) ^{-1}, (z_2 - z_3) ^{-1}] 
\end{align*}
by sending $(x_1 - x_2)^{d_1}(x_1 - x_3)^{d_2}(x_2-x_3)^{d_3} a$ for $a\in V(1,2,3)$ to 
\begin{align*}
\sum_{i, j, k \ge 0} \beta^V_* (\del^{(i)}_1 \del^{(j)}_2 \del^{(k)}_3 a ) \otimes z_1^iz_2^jz_3^k (z_1 - z_2)^{d_1}(z_1 - z_3)^{d_2}(z_2-z_3)^{d_3}.
\end{align*}

\begin{defn}
Let $U$ be the map 
\begin{align*}
V(1) \otimes V(2) \otimes V(3) \to V(1) \otimes \BC[\![ z_1, z_2 ]\!] [z_1^{-1}, z_2 ^{-1}, (z_1 - z_2)^{-1}]
\end{align*}
given by the following composition:
\begin{align*}
V(1) \otimes V(2) \otimes V(3)  & \xrightarrow{\id \otimes \mu} V(1) \otimes V(2:3) \xrightarrow{\mu} V(1:2:3) \\
 & \xrightarrow{\theta_{123}} V(1) \otimes \BC[\![ z_1, z_2, z_3 ]\!] [(z_1 - z_2) ^{-1} , (z_1 - z_3) ^{-1}, (z_2 - z_3) ^{-1}] \\
 & \xrightarrow{z_3 \mapsto 0} V(1) \otimes \BC[\![ z_1, z_2 ]\!] [z_1^{-1}, z_2 ^{-1}, (z_1 - z_2)^{-1}].
\end{align*}
\end{defn}

\begin{lemma}\label{lemma: U and Y agree}
Let $Y_{123}: V(1) \otimes V(2) \otimes V(3) \to V(1) \otimes \BC[\![ z_1, z_2 ]\!] [z_1^{-1}, z_2 ^{-1}, (z_1 - z_2)^{-1}]$ be the map given by 
\begin{align*}
u_1 \otimes u_2 \otimes u_3 \mapsto Y(u_1, z_1)Y(u_2, z_2)u_3.
\end{align*}
Then $Y_{123} = U$. 
\end{lemma}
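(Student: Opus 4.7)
The plan is to reduce both sides of the identity to Taylor expansions of a single element of $V(1:2:3)$, namely the triple singular product, and then match the two expansion procedures. By definition, $U(u_1 \otimes u_2 \otimes u_3) = \theta_{123}(\mu(u_1 \otimes \mu(u_2 \otimes u_3)))\big|_{z_3 = 0}$. Since $V$ is a singular commutative ring object, $\mu$ is associative, so
\[
\mu(u_1 \otimes \mu(u_2 \otimes u_3)) = \mu_{123}(u_1 \otimes u_2 \otimes u_3) \in V(1:2:3),
\]
where $\mu_{123}$ denotes the triple singular product. Thus $U(u_1 \otimes u_2 \otimes u_3) = \theta_{123}(\mu_{123}(u_1 \otimes u_2 \otimes u_3))\big|_{z_3 = 0}$. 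The remaining work is to show that $Y(u_1, z_1) Y(u_2, z_2) u_3$ coincides with this same formal expression.

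To do this, I would unpack $Y_{123}$ in stages. First, write $\mu(u_2 \otimes u_3) = (x_2 - x_3)^{-M} b$ with $b \in V(2,3)$, so that $Y(u_2, z_2) u_3 = z_2^{-M} \sum_j \alpha_*^V(\partial_2^{(j)} b)\, z_2^{j}$, a formal series in $z_2$ with coefficients in $V$. Then $Y(u_1, z_1)$ is applied coefficient-wise. To match this with the triple expansion of $\mu_{123}$, the key identity is that singular multiplication respects both the functor maps and the $T$-action: for $\phi : \{1,2,3\} \to \{1,2\}$ collapsing $\{2,3\}$ to $\{2\}$, we have $\mu(u_1 \otimes \alpha_*^V(b)) = \phi_*^V(\mu_{1,(2,3)}(u_1 \otimes b))$, and $\mu_{1,(2,3)}(u_1 \otimes \partial_i b) = \partial_i \mu_{1,(2,3)}(u_1 \otimes b)$ for $i \in \{2,3\}$. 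These let us move derivatives and pushforwards through $\mu$ in a controlled way.

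Assembling the coefficients, one recognizes $Y(u_1, z_1) Y(u_2, z_2) u_3$ as the Taylor expansion of $\mu_{123}(u_1 \otimes u_2 \otimes u_3) \in V(1:2:3)$ around the triple diagonal, expressed in the variables $(z_1, z_2, z_3)$ with $z_3 = 0$ (and where the expansion in $z_2$ arises from the contribution of position 2 combined with the $z_3 \to 0$ limit of position 3, which is exactly what $\phi$ encodes). This is precisely $\theta_{123}(\mu_{123}(u_1 \otimes u_2 \otimes u_3))\big|_{z_3 = 0} = U(u_1 \otimes u_2 \otimes u_3)$.

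The main technical obstacle is the bookkeeping: one must carefully distinguish $V(1:2)$, $V(1:\{2,3\})$, $V(2:3)$, and $V(1:2:3)$, correctly track which version of $\mu$ is being applied at each step, and verify that the $T$-equivariance absorbs the splitting $\partial_2 \mapsto \partial_2 + \partial_3$ that arises from the comultiplication-style pullback along $\phi^*$ from Definition \ref{Definition: coalgebra T}. Once this bookkeeping is arranged, the identification is essentially multi-variable Taylor's theorem applied to the associative triple product, and no further vertex-algebra-specific input is required.
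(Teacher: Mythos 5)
Your argument is correct and is essentially the paper's proof: the paper organizes the same computation as a commutative diagram whose outer edges are $U$ and $Y_{123}$, with the inner squares expressing exactly the two ingredients you isolate — compatibility of $\mu$ with the pushforwards along $\{2,3\}\to\{2\}$ and with the $T$-action, plus the identification of the iterated two-variable Taylor expansions (with the auxiliary variable set to $0$) with $\theta_{123}|_{z_3=0}$, checked on $V(1,2,3)$ and extended by $(x_i-x_j)$-linearity. Your invocation of associativity to form a triple product $\mu_{123}$ is harmless but not needed for this lemma (it is used later, in the deduction of locality); otherwise the bookkeeping you describe is precisely what the paper's diagram chase carries out.
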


\begin{proof}
Consider the following diagram.

\begin{center}
\begin{tikzpicture}[>=angle 90] 
\matrix(b)[matrix of math nodes, row sep=2em, column sep=0em, text height=1.5ex, text depth=0.25ex, font=\tiny]
{V(1) \otimes V(2) \otimes V(3)  & & \\
  V(1) \otimes V(2:3) & {V(1) \otimes V(2) \otimes \BC[\![ z_2, z_3]\!][(z_2 - z_3)^{-1}]} & V(1) \otimes V(2) \otimes \BC[\![ z_2]\!][z_2^{-1}] \\
  V(1:2:3) & V(1:2) \otimes \BC[\![ z_2, z_3]\!][(z_2 - z_3)^{-1}] & V(1:2) \otimes \BC[\![ z_2]\!][z_2^{-1}] \\
  V(1) \otimes \BC[\![ z_1, z_2, z_3]\!][(z_i -z_j)^{-1}]  & & V(1) \otimes \BC[\![ z_1, \tilde{z}_2, z_2]\!][(z_1 - \tilde{z}_2)^{-1}, z_2^{-1}] \\
  & V(1) \otimes \BC[\![ z_1, z_2]\!][(z_1-z_2)^{-1},z_1^{-1}, z_2^{-1}] & V(1) \otimes \BC[\![ z_1, z_2]\!][z_1^{-1}, z_2^{-1}].\\};
  
\path[->, font=\tiny]
(b-1-1) edge node[above right]{$\id \otimes \mu$} (b-2-1)
(b-2-1) edge node[above]{$\id \otimes \theta_{23}$} (b-2-2)
(b-2-2) edge node[above]{$z_3 \mapsto 0$} (b-2-3)
(b-2-1) edge node[left]{$\mu$} (b-3-1)
(b-2-2) edge node[left]{$\mu \otimes \id$} (b-3-2)
(b-2-3) edge node[right]{$\mu \otimes \id$} (b-3-3)
(b-3-1) edge node[below]{$\theta_{23}$} (b-3-2)
(b-3-2) edge node[below]{$z_3 \mapsto 0$} (b-3-3)
(b-3-1) edge node[left]{$\theta_{123}$}(b-4-1)
(b-3-3) edge node[right]{$\theta_{12}$}(b-4-3)
(b-4-3) edge node[right]{$\tilde{z}_2 \mapsto 0$} (b-5-3)
(b-4-1) edge node[below left]{$z_3 \mapsto 0$} (b-5-2)
(b-5-3) edge (b-5-2);  
  
\end{tikzpicture}
\end{center}
Note that the composition around the outer right edge of this diagram is exactly $Y_{123}$, while the composition around the outer left edge is $U$, so in order to prove the lemma it suffices to show that each of the three squares in the diagram commutes. 

The upper left square commutes because of the compatibility of $\mu$ with the functors induced by the surjections $\{2,3\} \to \{2\}$ and $\{1\} \sqcup \{2,3\} \to \{1\} \sqcup \{2\}$ and with the action of the $\del_i$. 

That the upper right square commutes is immediate, so it remains to check that the large bottom square commutes. Let us denote the two compositions by $F_{\urcorner}$ and $F_{\llcorner}$. First note that both functions give maps $V(1:2:3) \to V(1) \otimes \BC[\![ z_1, z_2]\!][(z_1-z_2)^{-1},z_1^{-1}, z_2^{-1}]$ which are linear with respect to the action of $(x_i- x_j)$ for $i \ne j$ in the following sense: for $v \in V(1:2:3)$, $(i,j) \in \{ (1,2), (1,3), (2,3)\}$, and $* = \urcorner, \llcorner$, we have
\begin{align*}
F_* ((x_i - x_j) \cdot v) = \left \{ \begin{array}{l l } (z_1 -z_2) F_*(v) & \text{ if } (i,j) = (1,2); \\ 
 z_1 F_*(v) & \text{ if } (i,j) = (1,3); \\
 z_2 F_*(v) & \text{ if } (i,j) = (2,3). \end{array} \right.
\end{align*}
Since $V(1:2:3)$ is the localization of $V(1,2,3)$ by $(x_1-x_2)(x_1 - x_3)(x_2 - x_3)$ we conclude that it suffices to check that $F_\urcorner(v) = F_\llcorner(v)$ for $v \in V(1,2,3)$. So we take $v \in V(1,2,3)$, and calculate from the definitions that we have
\begin{align*}
F_\llcorner (v) = \sum_{i, j \ge 0} \beta^V_* (\del_1^{(i)} \del_2^{(j)} v) \otimes z_1^{i} z_2^{j} = F_\urcorner(v).
\end{align*}
This completes the proof of the lemma. 
\end{proof}

The locality condition is a straightforward consequence of this lemma, together with the properties of the singular multiplication map $\mu$.

\begin{proof}[Proof of Proposition \ref{Proposition: locality holds}]
First let us fix some notation: we will denote the transposition that swaps $1$ and $2$  by $\sigma$ when viewed as a permutation of $\{1,2\}$, and by $\overline{\sigma}$ when viewed as a permutation of $\{1,2,3\}$. It induces maps 
\begin{align*}
\sigma^V_* &: V(1:2) \to V(1:2);\\
\overline{\sigma}^V_* &: V(1:2:3) \to V(1:2:3);\\
g_\sigma &: \BC[\![ z_1, z_2 ]\!] [z_1^{-1}, z_2^{-1}, (z_1 - z_2)^{-1} ] \to \BC[\![ z_1, z_2 ]\!] [z_1^{-1}, z_2^{-1}, (z_1 - z_2)^{-1} ] ;\\
g_{\overline{\sigma}} &:  \BC[\![ z_1, z_2, z_3]\!][(z_i -z_j)^{-1}] \to \BC[\![ z_1, z_2, z_3]\!][(z_i -z_j)^{-1}].
\end{align*}

Now we make the following observations:
\begin{align*}
Y(u_1, z_1) Y(u_2, z_2) u_3 & = U(u_1 \otimes u_2 \otimes u_3) \\
 & =F_\llcorner (\mu (u_1 \otimes \mu(u_2 \otimes u_3))) \\
 & =F_\llcorner (\mu (\mu (u_1 \otimes u_2) \otimes u_3))\\
 & =F_\llcorner (\mu (\sigma^V_* \mu (u_2 \otimes u_1) \otimes u_3))\\
 & =F_\llcorner (\overline{\sigma}^V_* \mu (\mu (u_2 \otimes u_1) \otimes u_3)).
\end{align*}
Here the first equality is Lemma \ref{lemma: U and Y agree}; the second is by definition of $U$ and $F_\llcorner$; the third is the associativity of $\mu$; the fourth is the commutativity condition (\ref{Eq: commutativity condition}); and the last is the functoriality of $\mu$. 

It is easy to check from the definitions that we have $\theta_{123} \circ \overline{\sigma}^V_*(v) = (\id \otimes g_{\overline{\sigma}}) \circ \theta_{123}(v)$ for any $v \in V(1,2,3)$. Furthermore, the two compositions satisfy the same linearity properties with respect to multiplication by $(x_i -x_j)$, so they agree on the localization $V(1:2:3)$. It follows that 
\begin{align*}
F_\llcorner (\overline{\sigma}^V_* \mu (\mu (u_2 \otimes u_1) \otimes u_3)) = g_\sigma (F_\llcorner (\mu (\mu (u_2 \otimes u_1) \otimes u_3))).
\end{align*}
Arguing as above, we see that this is equal to $g_\sigma (U(u_2 \otimes u_1 \otimes u_3))$, which by Lemma \ref{lemma: U and Y agree} is $g_\sigma (Y(u_2, z_1) Y(u_1, z_2) u_3) = Y(u_2, z_2) Y(u_1,z_1) u_3$.
\end{proof}

\section{From \texorpdfstring{$(A,H, S_X)$}{(A,H,S)}-vertex algebras to chiral algebras on \texorpdfstring{$X$}{X}}\label{Section: from AHS to chiral}
Recall part (1) of Theorem \ref{Theorem: everybody else's theorem} (Thm. 5.4 \cite{HL}). It provides an equivalence of categories
\begin{align*}
\Psi_X: \VA(X) \to \CAlg(X).
\end{align*}
Combining this with Theorem \ref{Theorem: AHS to vertex algebra}, we see that given any $(A, H, S_X)$-vertex algebra $\BV$, we obtain a chiral algebra $\Psi_X \circ \Phi_X (\BV)$ on $X$. In this section, we calculate the chiral bracket $\mu^\ch$ directly from the data of $\BV$, rather than passing through the category of vertex algebras on $X$. 

First let us establish some notation. As in the proof of Theorem \ref{Theorem: AHS to vertex algebra}, let $V = \BV(1)$ be the vector space underlying the vertex algebra $\Phi_X(\BV)$. As in remark \ref{Remark: geometric interpretation of V}, let $\CV = \CV(1)$ denote the left $\CD$-module on $X$ with global sections $V$, and similarly for any finite set $I$ let $\CV(I)$ denote the left $\CD$-module on $X^I$ with global sections $\BV(I)$. From the proof of Theorem 5.4 in \cite{HL} (or perhaps more explicitly seen in similar discussions in Section 19.2 of \cite{FBZ}), the right $\CD$-module underlying the chiral algebra $\Psi_X \circ \Phi_X$ is the right $\CD$-module $\CV^r= \CV \otimes \omega_X$ corresponding to the left $\CD$-module $\CV$. Let us denote this right $\CD$-module by $\CA$.

To define a chiral bracket $\mu_0^\ch : j_* j^* \CA \boxtimes \CA \to \Delta_! \CA$, it is enough to give a map of left $\CD$-modules
\begin{align*}
\lambda_0^\ch: j_* j^* \CV \boxtimes \CV \to \Delta_* \CV.
\end{align*}
We give such a morphism by composing three maps as follows:
\begin{enumerate}
\item The first map $j_* j^* \CV \boxtimes \CV \to j_* j^* \CV(1,2)$ is induced on global sections by the singular multiplication map
\begin{align*}
\mu_{12}: (V(1) \otimes V(2)) \to V(1,2) \otimes_{S_X(1,2)} S_X(1:2),
\end{align*}
extended to a map of the localization
\begin{align*}
(V(1) \otimes V(2))\otimes_{S_X(1,2)} S_X(1:2) \to V(1,2) \otimes_{S_X(1,2)} S_X(1:2).
\end{align*}
\item The second map $\gamma: j_* j^* \CV(1,2) \to \Delta_*\Delta^* \CV(1,2)$ is the last map in the canonical exact sequence
\begin{align*}
0 \to \Delta_* \Delta^! \CV(1,2) \to \CV(1,2) \to j_* j^* \CV(1,2) \to \Delta_* \Delta^* \CV(1,2) \to 0. 
\end{align*}
(Here $\gamma$ is surjective because $j$ is affine and hence there are no higher derived terms for $j_*$.)
\item Notice that for  $\alpha: \{1,2\} \to \{1\}$, the map $\phi(\alpha)$ is just the diagonal embedding $\Delta$. The third map $\Delta_* \Delta^* \CV(1,2) \to \Delta_* \CV$ is induced from the map $\widetilde{\alpha}: \phi(\alpha)^*(\CV(1,2)) \to \CV(1)$ of Remark \ref{Remark: geometric interpretation of V}.
\end{enumerate}

\begin{thm} \label{Theorem: AHS to chiral algebra}
The map $\mu^\ch_0$ defined in this way agrees with the chiral bracket $\mu^\ch$ on the chiral algebra $\Psi_X \circ \Phi_X (\BV) = (\CA, \mu^\ch)$. 
\end{thm}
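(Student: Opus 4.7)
The plan is to verify the equality $\mu^\ch_0 = \mu^\ch$ by checking it on a generating set and then comparing both sides explicitly against the vertex operator $Y$ constructed in the proof of Theorem \ref{Theorem: AHS to vertex algebra}. Since both $\mu^\ch_0$ and $\mu^\ch$ are morphisms of right $\CD_{X^2}$-modules, and $X^2$ is affine, it suffices to check the induced maps $\lambda^\ch_0, \lambda^\ch : j_*j^*(\CV \boxtimes \CV) \to \Delta_*\CV$ agree on global sections. By $\CO(X^2)$-linearity, the global sections of $j_*j^*(\CV \boxtimes \CV) = (V \otimes V) \otimes_{S_X(1,2)} S_X(1:2)$ are generated over $\CD(X^2)$ by the elements $(x_1 - x_2)^{-N}(v \otimes u)$ for $v, u \in V$ and $N \gg 0$, so it is enough to compare the two maps on such elements.

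Next, I would recall from \cite{HL} (cf.\ Section 19.2 of \cite{FBZ}) the explicit formula describing the chiral bracket produced by the functor $\Psi_X$ from the vertex algebra $V = \Phi_X(\BV)$. The key point is that $\lambda^\ch$ is characterized by the condition that for $v, u \in V$ and large $N$, the element $(x_1 - x_2)^{-N} (v \otimes u)$ maps into $\Delta_*\CV$ via the polar part at the diagonal of $Y(v, x_1 - x_2)u$, where $Y(v,z)u \in V(\!(z)\!)$. More concretely, using the identification $\Delta_* \CV = j_* j^* \CV(1,2) / \CV(1,2)$ coming from the tail of the Cousin sequence, the image of $(x_1 - x_2)^{-N}(v \otimes u)$ is represented by the Laurent tail of $Y(v, z)u$ in the variable $z = x_1 - x_2$ along the diagonal.

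On the other hand, the map $\lambda^\ch_0$ is computed directly on such a generator by $(x_1 - x_2)^{-N}(v \otimes u) \mapsto (x_1 - x_2)^{-N}\mu_{12}(v \otimes u)$, then projecting via $\gamma$, then applying $\widetilde{\alpha}$. If we write $\mu_{12}(v \otimes u) = (x_1 - x_2)^{-M} a$ with $a \in V(1,2)$ and $M \ge 0$, then the image in $\Delta_*\CV$ is the class of $\alpha^V_*(a)$ together with its derivatives under repeated action of $\del_1$, packaged as a distribution supported on the diagonal. This is exactly the data carried by the Taylor expansion $\theta$ applied to $\mu_{12}(v \otimes u)$ followed by setting $z_2 = 0$, and by the formula (\ref{Eq: Y}) this is precisely the Laurent series $Y(v, z)u$. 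Thus the image of $(x_1 - x_2)^{-N}(v \otimes u)$ under $\lambda^\ch_0$ matches the description of $\lambda^\ch$ from the previous paragraph.

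The main obstacle will be translating cleanly between the formal-variable bookkeeping used to define $Y$ (the variables $z_1, z_2$ and the operation of setting $z_2 = 0$) and the intrinsic $\CD$-module language of the Cousin sequence $0 \to \Delta_*\Delta^! \CV(1,2) \to \CV(1,2) \to j_*j^*\CV(1,2) \to \Delta_*\Delta^*\CV(1,2) \to 0$. To handle this cleanly, I would set up an explicit identification of $\Delta_*\Delta^*\CV(1,2)$ as a module of formal jets along the diagonal, verify that the Taylor expansion $\theta$ (and specialization $z_2 \mapsto 0$) realizes exactly the composition $\gamma$ followed by $\widetilde{\alpha}$ on sections of the form $(x_1 - x_2)^{-M} a$, and then invoke the characterization of $\Psi_X$ recalled above to conclude. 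Once this dictionary is in place, the theorem follows from the definition of $Y$ in Theorem \ref{Theorem: AHS to vertex algebra} together with $\CD$-linearity.
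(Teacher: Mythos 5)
Your proposal is correct and follows essentially the same route as the paper: both reduce to comparing the left $\CD$-module maps $\lambda^\ch$ and $\lambda^\ch_0$ on global-section generators $(x_1-x_2)^{-N}(v\otimes u)$, recall that $\lambda^\ch$ is given by $Y(v,x_1-x_2)u$, and then verify that the composition of $\gamma$ with $\widetilde{\alpha}$ coincides with the Taylor-expansion map $\theta(\cdot)\vert_{z=(x_1-x_2);\,w=0}$ by writing $\gamma$ out explicitly on sections of $\Delta_*\Delta^*\CV(1,2)$. The step you flag as the main obstacle is exactly the computation the paper carries out, via the explicit formula $v\otimes(x_1-x_2)^{-n-1}\mapsto\sum_i \frac{1}{i!}\otimes\overline{\del_x^i v}\otimes(x_1-x_2)^{i-n-1}$ for $\gamma$.
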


\begin{proof}
Let us recall the construction of the chiral bracket $\mu^\ch$. (In fact, we adapt the notation of Frenkel--Ben-Zvi \cite{FBZ}, who work only over a formal disk, to the ideas of Huang--Lepowsky \cite{HL}, who work over arbitrary open affine $X \in \BBA^1$.) The bracket $\mu^\ch$ is again defined in terms of a map of left $\CD$-modules\footnote
{In the notation of Frenkel--Ben-Zvi, this map, defined on a formal disk around a point $(x,x) \in X^2$, is called $\CY_x^2$.}
\begin{align}\label{Eq: chiral bracket}
\lambda^\ch: j_* j^* \CV \boxtimes \CV \to \Delta_* \CV.
\end{align}

Note that $j_* j^* \CV \boxtimes \CV$ has global sections $(V \otimes V) \otimes_{S_X(1,2)} S_X(1:2)$, viewed as an $S_X(1,2)$-module. Then $\lambda^\ch$ is given on global sections by 
\begin{align*}
v \otimes u \otimes (x_1 - x_2)^{-n} \mapsto Y(v, x_1 - x_2)u \otimes (x_1 - x_2)^{-n} \text{ mod}\ S_X(1) \otimes V.
\end{align*}
Recalling the definition of $Y(\cdot,z)(\cdot)$ from the proof of Theorem \ref{Theorem: AHS to vertex algebra}, we see that $\lambda^\ch$ is given by
\begin{align*}
v \otimes u \otimes (x_1 - x_2) ^{-n} \mapsto  \theta(\mu(v\otimes u))\vert _{z= (x_1 - x_2); w=0}(x_1 - x_2)^{-n}.
\end{align*}

Next let us write down the map $\gamma$ explicitly in terms of global sections. Global sections of
\begin{align*}
\Delta_* \Delta ^* \CV(1,2) = \frac{ j_* j^* (\CO_X \boxtimes \Delta^* \CV(1,2))}{\CO_X \boxtimes \Delta^* \CV(1,2)}
\end{align*}
are generated over $S_X(1,2)$ by elements that look like $1 \otimes \overline{v} \otimes (x_1-x_2)^{-n} \ \text{mod}\ S_X(1) \otimes V(1,2)/(x - y)V(1,2)$, where $\overline{v} \in V(1,2)/(x - y) V(1,2)$ and $n \in \BZ_{<0}$. Let us quickly recall the $\CD_{X^2}$-module structure in terms of these global sections:
\begin{align*}
x_1: 1 \otimes \overline{v} \otimes (x_1-x_2)^{-n} \mapsto & x \otimes \overline{v} \otimes (x_1-x_2)^{-n} ;\\
x_2: 1 \otimes \overline{v} \otimes (x_1-x_2)^{-n}  \mapsto & 1 \otimes \overline{xv} \otimes (x_1-x_2)^{-n} = 1 \otimes \overline{yv} \otimes (x_1-x_2)^{-n};\\
\del_{x_1}: 1 \otimes \overline{v} \otimes (x_1-x_2)^{-n}  \mapsto & \del_x(1) \otimes \overline{v} \otimes (x_1-x_2)^{-n} + 1 \otimes \overline{v} \otimes \del_{x_1}(x_1-x_2)^{-n}\\ 
& = -n \otimes \overline{v} \otimes (x_1-x_2)^{-n-1} ;\\
\del_{x_2}: 1 \otimes \overline{v} \otimes (x_1-x_2)^{-n}  \mapsto & 1 \otimes \overline{(\del_x + \del_y)v} \otimes (x_1-x_2)^{-n}  + 1 \otimes \overline{v} \otimes \del_{x_2}(x_1-x_2)^{-n} \\
& = 1 \otimes \overline{(\del_x + \del_y)v} \otimes (x_1-x_2)^{-n}  + n \otimes \overline{v} \otimes (x_1-x_2)^{-n-1}.
\end{align*}

The map $\gamma$ is defined by 
\begin{align*}
v \otimes (x_1 - x_2) ^{-n-1} \mapsto \sum_{i = 0}^{n} \frac{1}{i!} \otimes \overline{\del_x^i v} \otimes (x_1 - x_2) ^{i - n -1} \ \text{mod}\ S_X(1) \otimes V(1,2)/(x - y)V(1,2).
\end{align*}
Note that if we had taken the sum over all $i \ge 0$, we would get the same answer in the quotient; this is convenient when we don't explicitly know the order of the pole of the element we start with. 

With these explicit formulas to hand, it is easy to see that
\begin{align*}
\theta(\cdot)\vert_{ z= (x_1 - x_2); w=0} = \Delta_*(\alpha_*^V) \circ \gamma (\cdot). 
\end{align*}

From this it follows that $\lambda^\ch = \lambda_0^\ch$, and hence that $\mu^\ch = \mu_0^\ch$, as claimed. 
\end{proof}

\section{From factorization algebras on \texorpdfstring{$X$}{X} to \texorpdfstring{$(A, H, S_X)$}{(A,H,S)}-vertex algebras}\label{Section: from factorization to AHS}

Composing the functor $\Psi_X \circ \Phi_X$ with the equivalence from chiral algebras on $X$ to factorization algebras on $X$, we obtain a functor $F: \VA(A, H, S_X) \to \FA(X)$. In this section, we discuss a functor going in the opposite direction.

\begin{thm}\label{Theorem: Factorization to AHS}
Let $\CB = \{\CB_{X^I} \}$ be a unital factorization algebra on $X \subset \BBA^1$. For each finite set $I$, let $\BV(I) = \Gamma(X^I, \CB_{X^I})$; then $\BV= \{\BV(I)\}$ has a natural structure of $(A, H, S_X)$-vertex algebra. This extends to a functor $\Gamma_X$ from the category $\FA(X)$ of factorization algebras on $X$ to the category of $(A, H, S_X)$-vertex algebras. 
\end{thm}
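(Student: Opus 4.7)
The plan is to translate each component of the unital factorization structure on $\CB$ into the corresponding component of an $(A,H,S_X)$-vertex algebra structure on $\BV$, exploiting the dictionary laid out in Remark \ref{Remark: geometric interpretation of V}. Throughout, affineness of the various $X^I$ and $U(\alpha)$ allows me to pass freely between $\CD$-modules and their global sections.

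First I would build $\BV$ as an object of $\Fun(\Fin, A, T, S_X)$. On objects, set $\BV(I) = \Gamma(X^I, \CB_{X^I})$; for a morphism $\alpha \colon I \to J$ in $\Fin$, the structure map $\nu_\alpha \colon \phi(\alpha)^*\CB_{X^I} \to \CB_{X^J}$ coming from the unital reformulation of Definition \ref{Def: factorization algebra} induces by adjunction a map $\CB_{X^I} \to \phi(\alpha)_* \CB_{X^J}$, and I take $\alpha_*^V$ to be its map on global sections; functoriality in $\alpha$ follows from the corresponding compatibility of the $\nu_\alpha$. The left $\CD_{X^I}$-module structure on $\CB_{X^I}$ simultaneously provides an $S_X(I)$-action (via $\CO_{X^I}$) and a $T(I)$-action (with $\del_i$ acting as $\partial/\partial x_i$); naturality of these with respect to $\alpha_*^V$ is inherited from $\CD$-linearity of the $\nu_\alpha$.

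Next I would produce the singular multiplication $\mu \colon \BV \odot \BV \to \BV$ from the factorization isomorphisms $d_\alpha$. By Definition \ref{Definition: singular tensor product}, $\mu$ amounts to a natural family $\mu_{I_1,I_2} \colon \BV(I_1) \otimes \BV(I_2) \to \BV(I_1 : I_2)$, and the formula of Remark \ref{Remark: singular tensor product}(3) identifies $\BV(I_1 : I_2)$ with $\Gamma(U(I_1 : I_2), \CB_{X^{I_1 \sqcup I_2}})$ since $\CB$ is quasi-coherent as an $\CO$-module. The factorization isomorphism for the canonical surjection $I_1 \sqcup I_2 \surj \{1,2\}$ supplies an identification
\begin{align*}
(\CB_{X^{I_1}} \boxtimes \CB_{X^{I_2}})\big|_{U(I_1 : I_2)} \EquivTo \CB_{X^{I_1 \sqcup I_2}}\big|_{U(I_1 : I_2)},
\end{align*}
and I define $\mu_{I_1, I_2}$ as the external-product map into $\Gamma(U(I_1 : I_2), \CB_{X^{I_1}} \boxtimes \CB_{X^{I_2}})$ post-composed with this isomorphism. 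The unit $S_X \to \BV$ comes from taking global sections of $u \colon \CO \to \CB$; the compatibility diagram \eqref{diagram: unit condition} is then a direct transcription of the factorization unital axiom that $d_{\id}(1 \boxtimes b)$ lies in $\CB_{X^2}$ and restricts along the diagonal to $b$.

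It then remains to verify singular commutativity and associativity of $\mu$, plus the various naturalities. Commutativity of $\mu_{I_1, I_2}$ with $\mu_{I_2, I_1}$ comes from the $S_2$-equivariance of the $d_\alpha$ built into Definition \ref{Def: factorization algebra}. Associativity is the substantial point: both iterated singular products on $\BV(I_1) \otimes \BV(I_2) \otimes \BV(I_3)$ land in $\Gamma(U(I_1 : I_2 : I_3), \CB_{X^{I_1 \sqcup I_2 \sqcup I_3}})$, and both should coincide with the single application of the inverse factorization isomorphism for the three-fold surjection $I_1 \sqcup I_2 \sqcup I_3 \surj \{1,2,3\}$; this is exactly the compatibility of the $d_\alpha$ under the two factorizations $I_1 \sqcup I_2 \sqcup I_3 \surj \{12, 3\} \surj \{*\}$ and $I_1 \sqcup I_2 \sqcup I_3 \surj \{1, 23\} \surj \{*\}$. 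Functoriality in $\CB$ is then automatic, since a morphism of factorization algebras by definition intertwines all of $\nu_\alpha$, $d_\alpha$, and $u$, so the assignments $\alpha_*^V$, $\mu_{I_1, I_2}$, and the unit $S_X \to \BV$ are natural in $\CB$. The main obstacle I anticipate is purely bookkeeping: recasting each factorization coherence (naturally phrased for $\CD$-modules on $X^I$) as an identity of morphisms in $\Fun(\FinNEq, A, T, S_X)$, and carefully verifying that the abstract characterization of $\BV(I_1 : \ldots : I_n)$ via Remark \ref{Remark: singular tensor product}(3) really does agree with $\Gamma(U(I_1 : \ldots : I_n), \CB_{X^{I_1 \sqcup \ldots \sqcup I_n}})$, so that each piece of the singular tensor product unwraps to an explicit factorization gluing.
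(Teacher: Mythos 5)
Your proposal is correct and follows essentially the same route as the paper: $\alpha_*^V$ from the Ran maps $\nu_\alpha$, the $T$- and $S_X$-actions from the $\CD$-module structure, $\mu_{I_1,I_2}$ from the factorization isomorphism $d_\alpha$ for $I_1\sqcup I_2 \surj \{1,2\}$, and commutativity/associativity from the coherence of the $d_\alpha$ under composition. The only cosmetic difference is that you verify unitality via the explicit map $u\colon \CO \to \CB$ and diagram \eqref{diagram: unit condition}, whereas the paper deduces $\BV(\varnothing)=\BC$ from the factorization isomorphism for $\varnothing = \varnothing \cup \varnothing$; these are equivalent by Remark \ref{Remark: units for AHS}.
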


\begin{proof}
Let us first define the structure of $\BV$ as a functor from $\Fin$ to the category of vector spaces. Given a morphism $\alpha: I \to J$ of finite sets, we define $\alpha^V_*: \BV(I) \to \BV(J)$ using the Ran condition data of $\CB$. Indeed, we have
\begin{align*}
\nu_\alpha: \phi(\alpha)^*\CB_{X^I} \to \CB_{X^J},
\end{align*}
which gives a map $\nu_\alpha: S_X(J) \otimes_{S_X(I)} \BV(I) \to \BV(J)$ on global sections. Then for $v \in \BV(I)$, we set $\alpha^V_* (v) = \nu_\alpha (1 \otimes v) \in \BV(J)$. The fact that the morphisms $\nu_\alpha$ are compatible with composition of the maps $\alpha$ implies that $\alpha^V_*$ are too. 

Next, $T(I)$ and $S_X(I)$ act naturally on $\BV(I)$, because $\CB_{X^I}$ is a $\CD_{X^I}$-module, and $T(I), S_X(I) \subset \Gamma(X^I, \CD_{X^I})$. Furthermore, because the map $\nu_\alpha$ is compatible with the $\CD$-module structures of $\CB_{X^I}$ and $\CB_{X^J}$, the $T$- and $S_X$-actions on $\BV(I)$ and $\BV(J)$ are intertwined by $\alpha^V_*$ in the appropriate way. 

The singular multiplication $\mu: \BV\odot \BV \to \BV$ is induced by the factorization isomorphisms in the following way. Recall that to define $\mu$ we need to give for each pair of finite sets $I_1, I_2$ a map
\begin{align*}
\mu_{I_1,I_2}: (\BV(I_1) \otimes_\BC \BV(I_2)) \otimes_{S_X(I_1, I_2)} S_X(I_1: I_2) \to \BV(I_1: I_2).
\end{align*}
This map should be compatible with the actions of $S_X(I_1: I_2)$ and $T(I_1: I_2)$ on each side, and furthermore the collection of maps $\mu_{I_1, I_2}$ should be functorial in $I_1$ and $I_2$. 

So let us choose a pair $I_1, I_2$. Let $I = I_1 \sqcup I_2$, and let $\alpha: I \to \{1,2\}$ be the obvious map. Then $\mu_{I_1, I_2}$ is defined to be the following composition:
\begin{align*}
(\BV(I_1) \otimes \BV(I_2)) \otimes_{S_X(I_1, I_2)} S_X(I_1: I_2) = \Gamma(X^I, j(\alpha)_* j(\alpha)^* (\CB_{X^{I_1}} \boxtimes \CB_{X^{I_2}})) \\
 \EquivTo \Gamma(X^J, j(\alpha)_* j(\alpha)^* \CB_{X^I}) = \BV(I_1: I_2).
\end{align*} 
That is, $\mu_{I_1, I_2} (v_1 \otimes v_2) = d_\alpha (v_1 \otimes v_2)$. Because $d_\alpha$ is a morphism  of $\CD$-modules, $\mu_{I_1, I_2}$ is compatible with the actions of $S_X$ and $T$. 

To check the functoriality of $\mu$, we consider sets $K_1$ and $K_2$ with maps $\gamma_i: K_i \to I_i$, inducing $\gamma: K = K_1 \sqcup K_2 \to I$. We need to show that the following diagram commutes:

\begin{center}
\begin{tikzpicture}[>=angle 90] 
\matrix(b)[matrix of math nodes, row sep=2em, column sep=2em, text height=1.5ex, text depth=0.25ex]
{\BV(K_1) \otimes \BV (K_2) & \BV(K_1: K_2)\\
  \BV(I_1) \otimes \BV (I_2) & \BV(I_1: I_2). \\};
\path[->, font=\scriptsize]
 (b-1-1) edge node[above]{$\mu_{K_1, K_2}$} (b-1-2)
 (b-1-2) edge node[right]{$\gamma^V_*$} (b-2-2)
 (b-1-1) edge node[left]{$\gamma_{1,*}^V \otimes \gamma_{2,*}^V$} (b-2-1)
 (b-2-1) edge node[below]{$\mu_{I_1, I_2}$}(b-2-2);
\end{tikzpicture}
\end{center}
This follows immediately from the compatibility between the $\nu$s and the $d$s. 

It remains to check that the resulting singular multiplication map is associative and commutative. Both properties follow from the compatibility of the morphisms $d$ under composition. 

Finally, to see that $\BV$ is a unital $(A, H, S_X)$-vertex algebra, observe (similarly to the arguments appearing in Section 3.4.5 of \cite{BD}) that the factorization isomorphism for the decomposition $\varnothing= \varnothing \cup \varnothing$ makes $\BV(\varnothing)$ into an algebra with multiplication $\mu: \BV(\varnothing) \otimes \BV(\varnothing) \to \BV(\varnothing)$ an isomorphism of vector spaces. We conclude that $\BV(\varnothing) = \BC$, with unit $1$. 
\end{proof}

\section{Composition of functors}\label{sec: composition of functors}
We have now constructed the following (non-commutative) diagram of functors, where the horizontal arrows in the top row are the equivalences of Theorem \ref{Theorem: everybody else's theorem}:

\begin{center}
\begin{tikzpicture}[>=angle 90, bij/.style={above,inner sep=0.5pt}]
\matrix(b)[matrix of math nodes, row sep=2em, column sep=2em, text height=1.5ex, text depth=0.25ex]
{\VA(X) & \CAlg(X) & \FA(X) \\
& \VA(A, H, S_X)& \\};
\path[->, font=\scriptsize]
 (b-1-1) edge node[bij]{$\sim$} node[below]{$\Psi_X$} (b-1-2)
 (b-1-2) edge node[bij]{$\sim$} (b-1-3)
 (b-2-2) edge node[below left]{$\Phi_X$} (b-1-1)
 (b-1-3) edge node[below right]{$\Gamma_X$} (b-2-2);
\end{tikzpicture}
\end{center}

Let us fix some notation. Suppose that $(V, Y(\bullet, z), \vac)$ is a vertex algebra on $X$; let $\CV$ denote the underlying left $\CD_X$-module, and let $\CV^r$ denote the corresponding right $\CD_X$-module. The chiral algebra corresponding to the vertex algebra has underlying right $\CD_X$-module $\CV^r$, and chiral bracket $\mu^\ch$ corresponding to a map $\lambda^\ch$ of left $\CD_X$-modules as in (\ref{Eq: chiral bracket}). Finally, let us denote the  corresponding factorization algebra by $\CB = \{ \CB_{X^I} \}$. Unwinding the definitions of the functors in Theorem \ref{Theorem: everybody else's theorem}, one sees that $\CB$ has the following properties: 
\begin{itemize}
\item $\CB_X = \CV$ and $\CB_{X^2} = \Ker(\lambda^\ch: j_* j^* (\CV \boxtimes \CV) \to \Delta_* \CV)$.
\item The factorization isomorphism $d:  j^*(\CB_X \boxtimes \CB_X) \EquivTo j*(\CB_{X^2})$ is the restriction of the obvious inclusion $\CV \boxtimes \CV \emb j_* j^* (\CV \boxtimes \CV)$ to $U = X \times X \setminus \Delta$.
\item Ran's isomorphism $\nu: \CB_X \EquivTo \Delta^*\CB_{X^2} = \CB_{X^2} /(x_1-x_2)\CB_{X^2} $ is given on sections by $v \mapsto 1 \otimes v \mod (x_1-x_2)\CB_{X^2} $ (it follows from properties of the unit that $1 \otimes v \in \Ker(\lambda^\ch) \subset j_* j^*(\CV \boxtimes \CV)$). 
\item The inverse $\nu^{-1}$ can also be written explicitly as follows: given a section $A$ of $ \CB_{X^2} $, calculate $\lambda^\ch(A\cdot (x_1-x_2)^{-1})$, a section of $\Delta_* \CB_{X}$. Since $A$ is in the kernel of $\lambda^\ch$, we must obtain something of the form $\sum_{i} f_i(x_1) \otimes v_i \cdot (x_1-x_2)^{-1} = 1 \otimes \left(\sum_i f_i(x_2)\cdot v_i \right) (x_1-x_2)^{-1}$ (possibly $0$). Then it is easy to see that $A - \sum_i f_i(x_2)\cdot v_i$ are sections of the ideal sheaf $(x_1-x_2)\CB_{X^2} $, and so $A = \nu (\sum_i f_i(x) \cdot v_i)$ in $\Delta^*\CB_{X^2} $. 
\end{itemize}

\begin{thm}\label{Theorem: composition}
The functor $\Gamma_X$ provides a left-inverse to the functor $\Phi_X$ in the following sense:
\begin{align*}
\Phi_X \circ \Gamma_X (\CB) = V.
\end{align*}
\end{thm}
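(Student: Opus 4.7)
The plan is to verify the identity $\Phi_X \circ \Gamma_X(\CB) = V$ by matching each piece of the vertex algebra structure on the two sides.

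I would begin with the easy pieces. By the construction of $\Gamma_X$, the $(A,H,S_X)$-vertex algebra $\BV \defeq \Gamma_X(\CB)$ satisfies $\BV(1) = \Gamma(X, \CB_X) = \Gamma(X, \CV) = V$, so the vector space underlying $\Phi_X(\BV)$ is $V$. The vacuum of $\Phi_X(\BV)$ is the image of $1 \in S_X(1) = \Gamma(X, \CO_X)$ under the unit $S_X \to \BV$; this map comes from the factorization unit $\CO \to \CB$, so it sends $1$ to $\vac \in V$. Likewise, the translation operator on $\Phi_X(\BV)$ is the action of $\del \in T(1)$ on $\BV(1) = V$, which by the definition of $\Gamma_X$ is the left $\CD_X$-module action on $\CV$, i.e.\ the original $T$.

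The main step is to identify the vertex operator $Y(\bullet, z)$. Rather than handle $Y$ directly, I would pass to the chiral bracket through the equivalence $\Psi_X$, which makes it enough (since $\Psi_X$ is an equivalence) to compare $\Psi_X \circ \Phi_X(\BV)$ with $\Psi_X(V)$. By Theorem \ref{Theorem: AHS to chiral algebra}, the chiral bracket of $\Psi_X \circ \Phi_X(\BV)$ comes from the map
\begin{equation*}
\lambda^\ch_0 : j_* j^*(\CV \boxtimes \CV) \xrightarrow{d_\alpha} j_* j^* \CB_{X^2} \xrightarrow{\gamma} \Delta_* \Delta^* \CB_{X^2} \xrightarrow{\Delta_* \nu_\alpha} \Delta_* \CV,
\end{equation*}
with $d_\alpha, \nu_\alpha$ the factorization data of $\CB$ and $\gamma$ the canonical quotient from the four-term sequence for $\CB_{X^2}$. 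The task is to show $\lambda^\ch_0 = \lambda^\ch$.

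To this end I would unwind the bulleted descriptions of $\CB$ recalled just before the theorem. The identity $\CB_{X^2} = \Ker(\lambda^\ch)$ yields the short exact sequence $0 \to \CB_{X^2} \to j_*j^*(\CV \boxtimes \CV) \xrightarrow{\lambda^\ch} \Delta_* \CV \to 0$. Since $j^* \CB_{X^2} = j^*(\CV \boxtimes \CV)$, the map $d_\alpha$ is a tautological identification after $j_* j^*$, and so $\gamma \circ d_\alpha$ is the canonical surjection onto the cokernel of $\CB_{X^2} \hookrightarrow j_*j^*(\CV \boxtimes \CV)$. By the four-term sequence this cokernel is $\Delta_* \Delta^* \CB_{X^2}$, while by the $\lambda^\ch$-sequence it is $\Delta_* \CV$; the map $\Delta_* \nu_\alpha$ implements precisely the canonical isomorphism between them. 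This final compatibility is exactly the content of the explicit formula for $\nu_\alpha^{-1}$ (namely $\nu_\alpha^{-1}(v) = 1 \otimes v \mod (x_1 - x_2)$, inverted via $A \mapsto \lambda^\ch(A \cdot (x_1 - x_2)^{-1})$), so a direct diagram chase on sections $v \otimes u \otimes (x_1 - x_2)^{-n}$ finishes the proof. I expect this last identification of $\Delta_* \nu_\alpha$ with the cokernel isomorphism induced by $\lambda^\ch$ to be the main bookkeeping obstacle, though the explicit formulas recalled before the theorem are tailor-made to make it go through.
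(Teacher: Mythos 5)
Your proposal is correct but takes a genuinely different route from the paper. The paper stays on the vertex-algebra side: it unwinds the formula $\widetilde{Y}(v,z_1)u=\sum_i \alpha^V_*(\del_1^{(i)}A)\otimes z_1^{i-N}$ and proves the coefficient identity $v_{N-i-1}(u)=\alpha^V_*(\del_1^{(i)}A)$ by establishing, via induction on $i$, an explicit formula for $\lambda^\ch(\del_1^i A\cdot(x_1-x_2)^{-k})$ and then specializing to $k=1$. You instead transport the whole question to the chiral side via $\Psi_X$ and invoke Theorem \ref{Theorem: AHS to chiral algebra}, reducing everything to the single assertion $\Delta_*\nu\circ\gamma\circ d_\alpha=\lambda^\ch$; this reuses more of the machinery already built and is conceptually cleaner, at the cost of two points you should make explicit. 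First, since the underlying $\CD_X$-modules, vacuum and translation already literally agree, you need that $Y$ is \emph{recoverable} from $\lambda^\ch$ (true, by the explicit formula for $\lambda^\ch$ in terms of $Y$), not merely that $\Psi_X$ is an equivalence, to upgrade equality of brackets to equality of vertex algebras. Second, the deferred ``bookkeeping obstacle'' can be discharged without redoing the paper's induction: both $\Delta_*\nu\circ\gamma\circ d_\alpha$ and $\lambda^\ch$ are morphisms of $\CD_{X^2}$-modules out of $j_*j^*(\CV\boxtimes\CV)$ that kill $\CB_{X^2}=\Ker(\lambda^\ch)$, and the quotient $\Delta_*\Delta^*\CB_{X^2}$ is generated as a $\CD_{X^2}$-module by classes of first-order poles $A\cdot(x_1-x_2)^{-1}$ with $A$ a section of $\CB_{X^2}$; on such sections $\gamma$ contributes only its $i=0$ term, and the agreement of the two maps is then exactly the defining property of $\nu^{-1}$ recalled in the bullet list before the theorem. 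If you instead chase general sections $v\otimes u\otimes(x_1-x_2)^{-n}$ as you propose, you will find yourself reproducing the paper's induction claim, so the generators-plus-$\CD$-linearity observation is worth adding.
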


\begin{rmk}\label{Remark: not an inverse}
We will see in Section \ref{sec: failure to be an equivalence} that $\Phi_X$ is not an equivalence; in particular, $\Gamma_X$ does not provide a right-inverse to $\Phi_X$. 
\end{rmk}

\begin{proof}[Proof of theorem]
Let $\{V(I), \mu\}$ denote the $(A, H, S_X)$ vertex algebra $\Gamma_X (\CB)$. In particular, $V(1) = \Gamma(X, \CV) = V$, and so the vector space underlying the vertex algebra $\Phi_X \circ \Gamma_X(\CB)$ is indeed $V$, with the same $\Gamma(X, \CO_X)$-module structure as that of the original vertex algebra. Likewise, the derivation agrees with the original one. Furthermore, the unit morphism $\CO \to \CB$ of the factorization algebra is determined by the fact that the global section $1$ of $\CO_X$ is sent to $\vac \in V = \Gamma(X, \CB_X)$; hence the unit morphism $\{S_X(I) \to V(I) \}$ has the same property, and in particular the vector $\vac$ is still the vacuum vector of the vertex algebra $\Phi_X \circ \Gamma_X(\CB)$.

Let $\widetilde{Y}(\bullet, z)$ denote the vertex operator function of the new vertex algebra. It remains to show that for any $v, u \in V$, 
\begin{align*}
\widetilde{Y}(v, z) u = Y(v,z) u.
\end{align*}
Recall the definition (\ref{Eq: Y}) of $\widetilde{Y}$ in terms of the singular multiplication $\mu$: we must choose $N$ so that $\mu_{12}(v \otimes u) \cdot (x_1-x_2)^N = A $ is in $V(1,2) = \Gamma(X, \CB_{X^2})$; then 
\begin{align*}
\widetilde{Y}(v,z_1)u = \sum_{i\ge 0} \alpha^V_*(\del_1^{(i)} A) \otimes z_1^{i-N}.
\end{align*}
But both $\mu_{12}$ and $\alpha^V_*$ can be written in terms of the factorization structure of $\CB$: 
\begin{itemize}
\item $\mu_{12}(v \otimes u) = d(v \otimes u)$; of course this is just $v \otimes u$ as a section of $j_*j^*(\CV \boxtimes \CV)$, but when we think of it as a section of $j_*j^*(\CB_{X^2})$, we use the fact that by our assumption above, it is of the form $A \cdot (x_1-x_2)^{-N}$, where $A$ is a global section of $\CB_{X^2}$. Thus we conclude that $A = v\otimes u \cdot (x_1-x_2)^{N}$.
\item We factor $\alpha^V_*$ as follows:
\begin{align*}
V(1,2) \surj V(1,2) / (x_1-x_2)V(1,2) \xrightarrow{\nu^-1} V(1);
\end{align*}
in other words, $\alpha^V_*$ is calculated using the procedure described just before the statement of the theorem. 
\end{itemize}
Comparing with $Y(v,z)u = \sum_{n \in \BZ} v_{n} (u) \cdot z^{-n-1}$, we see that to prove the theorem, it is enough to show that for every $i \ge 0$, 
\begin{align}\label{Eq: equation for coefficients of vertex operator}
v_{N - i - 1}(u) = \alpha^V_*(\del_1^{(i)} A). 
\end{align}
(Note that the choice of $N$ ensures that $v_{N - i -1}(u) = 0$ for any $i < 0$: indeed, we know that $A \in \Ker(\lambda^\ch)$, or equivalently that $Y(v, (z-w))u \cdot (z-w)^N = \sum_{n \in \BZ} v_{n} u \cdot (z-w)^{N-n-1}$ has no non-zero non-positive powers of $(z-w)$.)

We claim that the following equation holds for any pair $v \otimes u$ together with an integer $N$ such that $v \otimes u \cdot (x-y)^N = A$ lies in $V(1,2)$, and any integers $i, k \ge 0$:
\begin{align}\label{Eq: induction claim}
\lambda^\ch (\del_1^i A \cdot (x_1-x_2)^{-k}) = \sum_{j = N - k - i}^{N - i - 1} \frac{(N - j - 1)!}{(N - j - 1 - i)!} v_{(j)}u \cdot (x_1-x_2)^{N - j-k-i -1}
\end{align}

This claim follows from a straightforward induction argument, using the definition of $A, N$, and $\lambda^\ch$ for the base case $i=0$, and the fact that $\lambda^\ch$ commutes with the action of $\del_1$ for the induction step.

In particular, consider the case $k = 1$: we conclude that 
\begin{align*}
\lambda^\ch(\del_1^i A \cdot (x_1-x_2)^{-1}) & = \frac{(N - (N - 1 - i) - 1)!}{(N - (N - 1 - i) - 1 - i)!} v_{N - 1 - i}u \cdot (x_1-x_2)^{N - (N - 1 - i) - 1 - i -1} \\
& = i! v_{N - 1 - i} u \cdot (x_1-x_2)^{-1}.\\
\end{align*}
This tells us that $\alpha^V_* (\del^i A ) =i! v_{N - 1 -i}u$; in other words, Equation (\ref{Eq: equation for coefficients of vertex operator}) holds. The proof is complete. 

\end{proof}

\section{Translation-equivariant version of the story}\label{Section: translation-equivariant}
In this section, we recall the notion of a translation-equivariant vertex algebra on $\BBA^1$. Then we introduce the notion of a translation-equivariant $(A,H,S_{\BBA^1})$-vertex algebra, and we show that the functor from $(A,H,S_{\BBA^1})$-vertex algebras to vertex algebras can be enhanced to a functor of the translation-equivariant categories; likewise, the functor from factorization algebras to $(A, H, S_{\BBA^1})$-vertex algebras can be enhanced to the translation-equivariant categories. We also give a functor from the category $(A,H,S_B)$-vertex algebras to the category of translation-equivariant $(A, H, S_{\BBA^1})$-vertex algebras. (Unlike the analogous functor from vertex algebras to translation-equivariant vertex algebras on $\BBA^1$, this functor is not an equivalence.)

\begin{defn}\label{def: translation-equivariant vertex algebras}
Recall that a vertex algebra $V$ on $\BBA^1$ is in particular a $\BC[x]$-module. We consider the map $p: \BBA^1 \times \BBA^1 \to \BBA^1$ given by projection onto the second factor, and also the map $a: \BBA^1 \times \BBA^1 \to \BBA^1$ given by addition (i.e. translation). The vertex algebra $V$ is said to be \emph{translation-equivariant} if it is equipped with an isomorphism $\psi: p^*V \to a^*V$ of $\BC[t, x]$-modules which is compatible with the vertex algebra structure. (More precisely, one can formulate the notion of a \emph{family} of vertex algebras over $\BBA^1$; $p^*V$ and $a^*V$ are examples of such families, and $\psi$ is an isomorphism between them.)

In particular, for any choice of $t=t_0$, we consider the restriction $V_{t_0}$ of $a^*V$ to $\{t_0\} \times \BBA^1$. As a vector space, $V_{t_0} = V$, but the $\BC[x]$ action is shifted by $x \mapsto x - t_0$. Under this action, $V_{t_0}$ is a vertex algebra over $\BBA^1$, with the same vacuum vector and vertex operators as $V$. The isomorphism $\psi: p^* V \to a^*V$ restricts to give an isomorphism $\psi_{t_0}: V \to V_{t_0}$ of vertex algebras over $\BBA^1$; these isomorphisms are compatible under composition and addition of the parameters $t_0$. 

Let us denote the category of translation-equivariant vertex algebras on $\BBA^1$ by $VA(\BBA^1)^{\BBA^1}$. 
\end{defn}

Recall also part two of Theorem \ref{Theorem: everybody else's theorem}, which states that the category of translation-equivariant vertex algebras on $\BBA^1$ is equivalent to the category of vertex algebras. This equivalence is given on the level of vector spaces by taking invariant vectors, or as explained in the appendix of \cite{BDHK}, by taking the stalk at $0$. 

Let us now formulate the definition of a \emph{translation-equivariant $(A,H,S)$-vertex algebra on $\BBA^1$}. First recall the definition of a family of $(A,H,S)$-vertex algebras over $\BBA^1$, as defined in Remark \ref{rmk: AHS in families}; here we are interested in the case $p_1: X = \BBA^1 \times \BBA^1 \to \BBA^1$. Observe that the second projection map $p:X  \to \BBA^1$ extends to projection maps $p^I: X^I_{\BBA^1} \to \BBA^I$ which are compatible with diagonal embeddings and projections. It follows that for an $(A, H, S_{\BBA^1})$-vertex algebra $V$, we can use pullback along $p$ to define a family of $(A, H, S)$-vertex algebras $p^*V$ over $\BBA^1$: for $I \in \Fin$, we have $p^*V(I) \defeq (p^I)^*(V(I)) = \BC[t, x_i]_{i \in I} \otimes_ {\BC[x_i]} V(I)$. (This is the \emph{trivial} family of $(A, H, S)$-vertex algebras.) Similarly, the action of $\BBA^1$ on itself by addition induces compatible maps $a: X \to \BBA^1$ and $a^I: X^I_{\BBA^1} \to \BBA^I$ (the latter can be identified with the diagonal action of $\BBA^1$ on $\BBA^I$); hence we obtain a second family $a^* V$. 

\begin{defn}
A \emph{translation-equivariant $(A,H,S_{\BBA^1})$-vertex algebra} is a pair $(V, \psi)$, where $V$ is an $(A,H,S_{\BBA^1})$-vertex algebra, and $\psi: p^*V \to a^*V$ is an isomorphism of families of $(A,H,S)$-vertex algebras. The isomorphism $\psi$ is required to satisfy a natural compatibility condition on $\BBA^1 \times \BBA^1 \times \BBA^1$: namely, letting $p_3: (x,y,z) \mapsto z$ and $a_3: (x,y,z) \mapsto x+y+z$, we obtain from $\psi$ two isomorphisms $p_3^* V \to a^* V$,
\begin{align}\label{eq: cocycle condition}
p_3^*V = (p \times \id)^* p^* V \EquivTo(p \times \id)^* a^* V = (\id \times a)^*p^*V \EquivTo (\id \times a)^* a^* V = a_3^* V;  \nonumber\\
p_3^*V = (a \times \id)^*p^* V \EquivTo (a \times \id)^* a^* V = a_3^*V.
\end{align}
The condition on $\psi$ is that these two compositions must be equal.

Let us denote the category of translation-equivariant $(A, H, S_{\BBA^1})$-vertex algebras by $\VA(A, H, S_{\BBA^1})^{\BBA^1}$. 
\end{defn}

Now let $\CB$ be any factorization algebra on $\BBA^1$. Similarly to the above, the families of maps $\{p^I\}$ and $\{a^I\}$ induced by $p$ and $a$, respectively, allow us to pull $\BBA^1$ back to get relative factorization algebras $p^*\CB$ and $a^*\CB$ on $\BBA^1 \times \BBA^1$ over $\BBA^1$ (with respect to the first projection map). 

\begin{defn}
A \emph{translation-equivariant factorization algebra on $\BBA^1$} is a factorization algebra $\CB$ on $\BBA^1$ together with an isomorphism $\psi_\CB: p^* \CB \to a^* \CB$, satisfying a compatibility condition analogous to the condition described in (\ref{eq: cocycle condition}) above.
\end{defn}

Let us denote the category of translation-equivariant factorization algebras on $\BBA^1$ by $\FA(\BBA^1)^{\BBA^1}$.

The following is immediate from the definitions. 
\begin{prop}\label{Prop: translation-equivariant factorization algebra to AHS}
The functor $\Gamma_{\BBA^1}$ naturally extends to a functor (still called $\Gamma_{\BBA^1}$)
\begin{align*}
\FA(\BBA^1)^{\BBA^1} \to \VA(A,H,S)^{\BBA^1}.
\end{align*}
\end{prop}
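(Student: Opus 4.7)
The plan is to build the isomorphism $\psi$ for $\BV \defeq \Gamma_{\BBA^1}(\CB)$ by applying (a relative version of) $\Gamma$ to the isomorphism $\psi_\CB$, and then to check that the cocycle condition for $\psi$ follows formally from the cocycle condition on $\psi_\CB$ together with functoriality.

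First, I would upgrade the construction of Theorem \ref{Theorem: Factorization to AHS} to the relative setting over $\BBA^1$ described in Remark \ref{rmk: AHS in families}. Given a factorization algebra $\CC$ on $\BBA^1 \times \BBA^1$ relative to the first projection (i.e.\ a compatible family $\{\CC_{X^I_{\BBA^1}}\}$ of relative $\CD$-modules on the fibre products $X^I_{\BBA^1} = (\BBA^1 \times \BBA^1)^I_{\BBA^1}$ together with the analogous Ran and factorization isomorphisms), one defines $\Gamma_{\BBA^1}^{\rel}(\CC)(I) \defeq \Gamma(X^I_{\BBA^1}, \CC_{X^I_{\BBA^1}})$ and constructs the $T$-action, $S_{\BBA^1}$-action, and singular multiplication by exactly the same recipe as in the proof of Theorem \ref{Theorem: Factorization to AHS}, but now working over $\BC[t]$ rather than $\BC$; the verification that this yields a family of $(A, H, S)$-vertex algebras over $\BBA^1$ is formally identical.

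Next, I would identify $\Gamma_{\BBA^1}^{\rel}(p^*\CB)$ with $p^*(\Gamma_{\BBA^1}(\CB)) = p^* \BV$, and similarly for $a^*\CB$ and $a^*\BV$. The point is that the pullback squares defining $p^I$ and $a^I$ are flat (in fact, $p$ is the projection and $a$ is an isomorphism onto its image for each fixed $t$), so taking global sections commutes with pullback in the manner required, and the Ran/factorization isomorphisms of $p^*\CB$ and $a^*\CB$ are by construction the pullbacks of those of $\CB$. Applying $\Gamma_{\BBA^1}^{\rel}$ to the given isomorphism $\psi_\CB : p^*\CB \EquivTo a^*\CB$ of relative factorization algebras then produces an isomorphism $\psi \defeq \Gamma_{\BBA^1}^{\rel}(\psi_\CB) : p^*\BV \EquivTo a^*\BV$ of families of $(A, H, S)$-vertex algebras over $\BBA^1$, since $\Gamma_{\BBA^1}^{\rel}$ was defined functorially.

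To verify the cocycle condition on $\psi$ is straightforward: the two isomorphisms $p_3^* \BV \to a_3^* \BV$ appearing in the analogue of (\ref{eq: cocycle condition}) are obtained by applying the functor $\Gamma_{\BBA^1}^{\rel}$ (on $(\BBA^1)^3$ relative to $(\BBA^1)^2$, in the appropriate sense) to the corresponding two compositions built from $\psi_\CB$. Since the cocycle condition holds for $\psi_\CB$ by hypothesis and $\Gamma_{\BBA^1}^{\rel}$ is a functor, the two compositions agree. Functoriality on morphisms of translation-equivariant factorization algebras is likewise immediate, because a morphism $f: \CB \to \CB'$ intertwining $\psi_\CB$ and $\psi_{\CB'}$ is sent to $\Gamma_{\BBA^1}(f)$, which intertwines $\psi$ and $\psi'$ by the same functoriality argument.

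The main point requiring care — but not a genuine obstacle — is setting up the relative version $\Gamma_{\BBA^1}^{\rel}$ cleanly enough that base change along $p$ and $a$ gives canonical identifications $\Gamma_{\BBA^1}^{\rel}(p^*\CB) = p^*\BV$ and $\Gamma_{\BBA^1}^{\rel}(a^*\CB) = a^*\BV$. Once this bookkeeping is in place, every other step is a formal consequence of the functoriality of $\Gamma$ and the hypotheses on $\psi_\CB$.
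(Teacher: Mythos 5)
Your proposal is correct and takes essentially the same route as the paper, which simply declares the proposition ``immediate from the definitions''; your write-up is a faithful unpacking of that claim (relative version of $\Gamma$ as in Remark \ref{rmk: AHS in families}, base-change identifications $\Gamma^{\rel}(p^*\CB) \cong p^*\BV$ and $\Gamma^{\rel}(a^*\CB) \cong a^*\BV$, and the cocycle condition by functoriality). No gaps.
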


Recall the functor  $\Phi_{\BBA^1} : \VA(A, H, S_{\BBA^1}) \to \VA({\BBA^1})$ of Theorem \ref{Theorem: AHS to vertex algebra}. 

\begin{prop}\label{Prop: translation-equivariant AHS to vertex algebra}
The functor $\Phi_{\BBA^1}$ extends naturally to a functor (still denoted by $\Phi_{\BBA^1}$) 
\begin{align*}
\VA(A, H, S_{\BBA^1})^{\BBA^1} \to \VA({\BBA^1})^{\BBA^1}.
\end{align*}
\end{prop}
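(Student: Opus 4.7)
The plan is to build the equivariant $\Phi_{\BBA^1}$ by running the construction of Theorem \ref{Theorem: AHS to vertex algebra} in families and then transporting the equivariance isomorphism through the resulting naturality. First, I would verify that the recipe defining $\Phi_{\BBA^1}$ works in the relative setting of Remark \ref{rmk: AHS in families}. The vertex operator $Y(v, z)u$, the translation $T$, and the vacuum $\vac$ are all defined from the singular product $\mu_{12}$, the $T$-action, the map $\alpha_*^V$, and the localization by $(x_1 - x_2)$; these are all operations that commute with extension of scalars from $\BC$ to $\BC[t]$. Moreover, the verifications of the vertex algebra axioms --- in particular the locality proof via the three-variable auxiliary $\theta_{123}$ in Proposition \ref{Proposition: locality holds} --- proceed diagrammatically, and every diagram involved is built from natural transformations that are preserved under base change. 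Hence $\Phi_{\BBA^1}$ upgrades to a functor from families of $(A, H, S)$-vertex algebras over $\BBA^1$ to families of vertex algebras over $\BBA^1$.

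Second, I would establish canonical isomorphisms
\begin{align*}
\Phi_{\BBA^1}(p^* V) \cong p^* \Phi_{\BBA^1}(V), \qquad \Phi_{\BBA^1}(a^* V) \cong a^* \Phi_{\BBA^1}(V)
\end{align*}
of families of vertex algebras on $\BBA^1$ over $\BBA^1$, for every $(A, H, S_{\BBA^1})$-vertex algebra $V$. On underlying modules, $(p^*V)(I)$ is obtained from $V(I)$ by tensoring up to $\BC[t, x_i]_{i \in I}$, and each of the structural pieces (vacuum, translation, vertex operator) is similarly obtained by base change; the isomorphism is then given by the obvious comparison map, with the same argument for $a$. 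Given a translation-equivariant object $(V, \psi)$, I define $\psi'$ as the composite
\begin{align*}
p^* \Phi_{\BBA^1}(V) \cong \Phi_{\BBA^1}(p^* V) \xrightarrow{\Phi_{\BBA^1}(\psi)} \Phi_{\BBA^1}(a^* V) \cong a^* \Phi_{\BBA^1}(V).
\end{align*}
The cocycle condition (\ref{eq: cocycle condition}) for $\psi$ passes through the functoriality of $\Phi_{\BBA^1}$ and the naturality of the pullback identifications to yield the analogous cocycle condition for $\psi'$. Functoriality on morphisms is then immediate: a morphism in $\VA(A, H, S_{\BBA^1})^{\BBA^1}$ is by definition a morphism of underlying $(A, H, S_{\BBA^1})$-vertex algebras intertwining the equivariance data, a property preserved by $\Phi_{\BBA^1}$ because the pullback identifications are natural in $V$.

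The main obstacle is the bookkeeping in the first step: one must trace through every piece of the proof of Theorem \ref{Theorem: AHS to vertex algebra}, including the extension of $\theta$ to the localization at $(x_1 - x_2)$, the auxiliary three-variable $\theta_{123}$, and the use of the commutativity identity (\ref{Eq: commutativity condition}), and verify that each remains valid after replacing the base ring $\BC$ with $\BC[t]$. Once this is done, the naturality of the pullback identifications in the second step follows because both $p$ and $a$ are morphisms of $\BBA^1$-schemes and every construction in sight is functorial in the base.
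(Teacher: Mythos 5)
Your proposal is correct and follows essentially the same route as the paper: the paper's proof simply observes that $\psi$ restricts to an isomorphism $p^*(V(1)) \to a^*(V(1))$ and asserts that it "respects the vertex algebra structure" by the construction of $\Phi_{\BBA^1}$, which is exactly the content you make explicit via the base-change compatibility $\Phi_{\BBA^1}(p^*V) \cong p^*\Phi_{\BBA^1}(V)$ and the families version of Theorem \ref{Theorem: AHS to vertex algebra}. Your write-up supplies the details (including the cocycle condition, which the paper does not mention) that the paper compresses into "it is not hard to check."
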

\begin{proof}
Given a pair $(V, \psi)$ in $\VA(A, H, S_{\BBA^1})^{\BBA^1}$, we know that $V(1)$ has a structure of vertex algebra on $\BBA^1$. The isomorphism $\psi: p^* V \to a^*V$ gives in particular an isomorphism of vector spaces $p^*(V(1)) \to a^*(V(1))$. It is not hard to check from the construction of $\Phi_{\BBA^1}$ that this isomorphism of vector spaces respects the vertex algebra structure. 
\end{proof}

\begin{prop}\label{Prop: Sb to translation equivariant by tensoring}
There is a natural functor  $\Xi: \VA(A,H,S_B) \to \VA(A, H, S_{\BBA^1})^{\BBA^1}$. 
\end{prop}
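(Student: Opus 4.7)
The plan is to define $\Xi$ by tensoring with $S_{\BBA^1}$ in a way that encodes the extra information of absolute position. More precisely, given $\BV \in \VA(A,H,S_B)$, I will set, for each $I \in \FinNEq$,
\[
\Xi(\BV)(I) \defeq \BV(I) \otimes_{S_B(I)} S_{\BBA^1}(I),
\]
using the obvious inclusion $S_B(I) \hookrightarrow S_{\BBA^1}(I)$. Since $\BV$ as an object of $\Fun(\Fin, A, T, S_B)$ has trivial $S_B$-action on $\Fin$ (see Remark~\ref{Remark: restriction of Sb is trivial}), this recovers $\BV(I) \otimes_\BC \BC[x_i]_{i \in I}$ on the subcategory $\Fin$, while on the full category $\FinNEq$ it localises the polynomial factors at the appropriate $x_i - x_j$. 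This is in exact analogy with the extension construction of Remark~\ref{Remark: singular tensor product}(3). The $S_{\BBA^1}(I)$-action is by multiplication on the right factor, and the action of $T(I)$ is by the Leibniz rule $\del_i \otimes \id + \id \otimes \tfrac{\partial}{\partial x_i}$.

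Next I would equip $\Xi(\BV)$ with a singular multiplication $\mu^{\Xi(\BV)}$ inherited from $\mu^\BV$. For $I_1, I_2 \in \FinNEq$ I would set
\[
\mu^{\Xi(\BV)}_{I_1,I_2}\bigl((v_1 \otimes s_1) \otimes (v_2 \otimes s_2)\bigr) \defeq \mu^{\BV}_{I_1,I_2}(v_1 \otimes v_2) \otimes (s_1 \cdot s_2),
\]
where the product $s_1 \cdot s_2$ is computed inside $S_{\BBA^1}(I_1 : I_2)$ using the natural map $S_{\BBA^1}(I_1) \otimes S_{\BBA^1}(I_2) \to S_{\BBA^1}(I_1 : I_2)$. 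Commutativity, associativity, unitality, and compatibility with the $T$- and $S_{\BBA^1}$-actions each reduce, modulo a direct but slightly tedious bookkeeping check, to the corresponding properties of $\mu^\BV$ together with the Leibniz rule on polynomials. This gives $\Xi(\BV) \in \VA(A, H, S_{\BBA^1})$.

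To upgrade $\Xi(\BV)$ to a translation-equivariant object, I would construct $\psi: p^*\Xi(\BV) \to a^*\Xi(\BV)$ as the shift $x_i \mapsto x_i + t$ on the $S_{\BBA^1}$-factor combined with the identity on the $\BV$-factor. This shift is well defined precisely because the image of $S_B(I) \hookrightarrow S_{\BBA^1}(I)$ is spanned by polynomials in the differences $x_i - x_j$, which are invariant under simultaneous translation; so the formula descends to $\BV(I) \otimes_{S_B(I)} S_{\BBA^1}(I)$ without ambiguity. The cocycle condition (\ref{eq: cocycle condition}) then holds simply because the two ways of shifting by $t_1$ and $t_2$ combine to a single shift by $t_1 + t_2$ either way. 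Compatibility of $\psi$ with the $T$-, $S_{\BBA^1}$-, and $\mu$-structures is immediate from the definition since translation commutes with $\frac{\partial}{\partial x_i}$ and with the multiplication of polynomials. Finally, a morphism $F: \BV \to \BW$ in $\VA(A,H,S_B)$ tensors up to $F \otimes_{S_B} \id_{S_{\BBA^1}}$, which respects all the structures, giving functoriality.

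The main obstacle is Step 2: verifying that the singular multiplication $\mu^{\Xi(\BV)}$ is well-posed and satisfies the axioms of an $(A,H,S_{\BBA^1})$-vertex algebra. The point is that the singular tensor product in $\Fun(\FinNEq, A, T, S_{\BBA^1})$ is not the naive one, and the $T$-action on $\Xi(\BV)$ mixes the $\BV$-factor and the polynomial factor via Leibniz; one has to check that the candidate $\mu^{\Xi(\BV)}$ really does commute with this combined $T$-action, and that functoriality in $I_1, I_2$ is preserved under the extra polynomial decorations. Every other step is either a direct translation of a known property of $\BV$ or an elementary feature of $S_{\BBA^1}$.
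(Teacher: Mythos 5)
Your construction is the same as the paper's: tensor $\BV(I)$ with $S_{\BBA^1}(I)$ over $S_B(I)$, let $T$ act by the Leibniz rule $\del_i\otimes 1 + 1\otimes\frac{d}{dx_i}$, extend $\mu$ polynomially, and observe that $p^*\Xi(\BV)(I)$ and $a^*\Xi(\BV)(I)$ are both canonically $\BC[t,x_i]\otimes V(I)$ so that the equivariance isomorphism is essentially free (your explicit shift $x_i\mapsto x_i+t$ is exactly the paper's ``identity map'' read through the natural identifications of the two pullbacks). The proof is correct and matches the paper's, with somewhat more of the routine verifications made explicit.
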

\begin{proof}
Given $V \in \VA(A,H,S_B)$, we define an $(A, H, S_{\BBA^1})$-vertex algebra $W$ by setting, for $I \in \Fin$, $W(I) = V(I) \otimes_{S_B(I)} S_{\BBA^1}(I) = V(I) \otimes_{\BC} \BC[x_i]_{i \in I}$. This is naturally a module over $S_B$, and we define the action of $T(I) = \BC[\del_i]$ by letting $\del_i$ act by $\del_i \otimes 1 + 1 \otimes \frac{d}{dx_i}$. It is easy to see that this gives an object of $\Fun(\Fin, A, T, S_{\BBA^1})$; one can also extend the singular multiplication of $V$ $\BC[x_i]$-linearly to give a singular multiplication on $W$. 

It remains to show that the resulting $(A, H, S_{\BBA^1})$-vertex algebra $W$ can be given an equivariant structure $\psi$. For any $I \in \Fin$, we need to define $\psi^I: p^* W(I) \EquivTo a^* W(I)$ . Unwinding the defintions, we can see that $p^*W(I) \cong \BC[t, x_i] \otimes V(I)$, and likewise $a^*W(I) \cong \BC[t, x_i] \otimes V(I)$; we can take $\psi^I$ to be the identity map. It is immediate that this is functorial and respects the singular multiplication structures.
\end{proof}

\begin{rmk}[Remarks on Proposition \ref{Prop: Sb to translation equivariant by tensoring}]
Note that the $(A,H,S_{\BBA^1})$-vertex algebra $W$ constructed in the proof is almost never in the image of the functor $\Gamma_{\BBA^1} : \FA(\BBA^1) \to \VA(A, H, S_{\BBA^1})$. Indeed, we can see that factorization fails to hold on $\BBA^2$ as follows. Let $\CW(I)$ be the sheaf on $\BBA^I$ associated to $W(I)$. We can see that the restriction of $\CW(1,2)$ to the diagonal has global sections $V(1,2) \otimes \BC[x]$, while its restriction to the complement of the diagonal has global sections $V(1,2) \otimes \BC[x_1,x_2, (x_1-x_2)^{-1}]$. If we assume that $\CW$ is a factorization algebra, then from the first fact, we conclude that $V(1,2) \cong V(1)$, while from the second, we see that $V(1,2) \cong V(1) \otimes V(1)$; hence we must have $V(1) \otimes V(1) \cong V(1)$. 
\end{rmk}
\section{The failure of Borcherds's functor to be an equivalence: case study for the Virasoro and the lattice VOA.}\label{sec: failure to be an equivalence}
In this section, we will show that the functor $\Phi: \VA(A, H, \Sb) \to \VA$ fails to be an equivalence. More precisely, recall Example \ref{Example: conformal structure on lattice VOA}, in which we defined the \emph{conformal structure} on the rank 1 lattice vertex algebra $V^L$. Recall also Example \ref{Example: lattice AHS}: Borcherds constructs an example $\BV^L$ of an $(A, H, \Sb)$-vertex algebra such that $\Phi(\BV^L)$ is the lattice vertex algebra $V_L$. If $\Phi$ were essentially surjective, we could find an object $W \in \VA(A, H, \Sb)$ such that $\Phi(W) \cong \Vir_1$; and then if $\Phi$ were in addition fully faithful, we could also find a morphism $F: W \to \BV^L$ such that $\Phi(F)$ recovered the map of vertex algebras described in Example \ref{Example: conformal structure on lattice VOA},
\begin{align*}
\phi_0: \Vir_1 & \to V_L\\
L_{-2}v_1 & \mapsto \omega_0.
\end{align*}

We will prove that no such pair $(W, F)$ exists, and thus will have proved the following theorem:
\begin{thm}\label{Theorem: Borcherds doesn't give an equivalence, actual}
The functor $\Phi: \VA(A, H, \Sb) \to \VA$ is not an equivalence of categories. 
\end{thm}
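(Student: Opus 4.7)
The plan is to argue by contradiction: assume a lift $(W, F)$ exists with $\Phi(W) \cong \Vir_1$ and $\Phi(F) = \phi_0$, and extract an inconsistency from Borcherds's bicharacter formula. Let $\tilde\omega \defeq F_1(L_{-2}v_1) \in V^L = \BV^L(1)$, which by hypothesis corresponds via the identification (\ref{Eq: identifying V^L and V_L}) to the conformal vector $\omega_0 = \tfrac{1}{2}b_{-1}^2\vac$. First I would compute the singular product $\mu^{\BV^L}_{12}(\tilde\omega \otimes \tilde\omega) \in (V^L \otimes V^L) \otimes_\BC S_B(1:2)$ explicitly using formula (\ref{Eq: definition of mu}), together with the coproduct $\Delta(\tilde\omega)$ in the cocommutative coalgebra $V^L$ and the axioms of the bicharacter $r$. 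The output is a finite Laurent polynomial $\sum_n c_n (x_1-x_2)^n$ with $c_n \in V^L \otimes V^L$ of total conformal weight $4+n$.

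By the morphism property of $F$, each $c_n$ lies in the image $F_{12}(W(1,2))$, and the commutative square $F_1 \circ \alpha^W_* = \alpha^{\BV^L}_* \circ F_{12}$ for the surjection $\alpha: \{1,2\} \surj \{1\}$ forces the product $m(c_n) \in V^L$ to lie in $F_1(\Vir_1) = f^{-1}(\phi_0(\Vir_1))$ for every $n$; here $m = \alpha^{\BV^L}_*$ is the commutative multiplication on $V^L$. I intend to produce the contradiction at $n = 0$: the weight-$4$ part of $f^{-1}(\phi_0(\Vir_1))$ is only $2$-dimensional (spanned by the Virasoro descendants $f^{-1}(L_{-2}\omega_0)$ and $f^{-1}(L_{-4}\omega_0)$), while the weight-$4$ part of the chargeless Fock sector $f^{-1}(\pi_0) \subset V^L$ is $5$-dimensional. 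A direct computation should reveal that $m(c_0)$ has a non-vanishing component outside the Virasoro image---for example, a non-removable multiple of $f^{-1}(b_{-4}\vac)$---contradicting the constraint and ruling out the existence of $(W, F)$.

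The main obstacle is the explicit identification of $\tilde\omega = f^{-1}(\omega_0)$ inside $V^L$, together with its coproduct $\Delta(\tilde\omega)$. The map $f$ interchanges the $\BC[L] \otimes \Sym$-structure of $V^L$ against the Fock-charge grading of $V_L$, so $\tilde\omega$ is not manifestly a polynomial in the $b(k)$'s but involves nontrivial $e^{-kb}$ factors balanced against derivatives of the $e^{\lambda_a}$'s. Once $\Delta(\tilde\omega)$ is made explicit and each coproduct split evaluated via the bicharacter axioms, $c_0$ can be isolated and its multiplication image $m(c_0)$ compared against $f^{-1}(\phi_0(\Vir_1))$ to extract the contradiction.
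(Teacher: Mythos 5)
Your proposal is correct and tracks the paper's own argument very closely through its first two steps: the paper also argues by contradiction from a hypothetical lift $(W,F)$ of $\phi_0:\Vir_1\to V_L$, computes $\mu_{\BV^L}(\alpha\otimes\alpha)$ for $\alpha=f^{-1}(\omega_0)=\tfrac{1}{2N}e^{-2\lambda_0}(\lambda_0(1))^2$ via the bicharacter axioms (obtaining $\alpha\otimes\alpha+\tfrac1N e^{-\lambda_0}\lambda_0(1)\otimes e^{-\lambda_0}\lambda_0(1)(x-y)^{-2}+\tfrac12 e^0\otimes e^0(x-y)^{-4}$, so your $c_0$ is exactly $\alpha\otimes\alpha$), and uses $F(1{:}2)=F(1,2)\otimes\id_{\BC[(x-y)^{\pm1}]}$ to produce $\beta\in W(1,2)$ with $F(1,2)(\beta)=\alpha\otimes\alpha$. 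Where you genuinely diverge is the endgame: the paper first applies $\del_{x_1}$ to $\beta$ and only then pushes forward along $\{1,2\}\to\{1\}$, landing at conformal weight $5$, where $f\circ m\circ F(1,2)(\del_x\beta)=\tfrac12 b_{-2}b_{-1}^3\vac$ is checked to lie outside the $2$-dimensional image of $\phi_0$. You push $\beta$ forward directly and work at weight $4$: $f(m(c_0))=f(\alpha^2)=\tfrac14 b_{-1}^4\vac$, and the weight-$4$ image of $\phi_0$ is $\mathrm{span}\{\phi_0(L_{-4}v_1),\phi_0(L_{-2}^2v_1)\}=\mathrm{span}\{b_{-3}b_{-1}\vac+\tfrac12 b_{-2}^2\vac,\ \tfrac14 b_{-1}^4\vac+b_{-3}b_{-1}\vac\}$, which indeed does not contain $\tfrac14 b_{-1}^4\vac$ (matching the $b_{-1}^4$ coefficient forces coefficient $1$ on the second spanning vector, which then leaves an unremovable $b_{-3}b_{-1}\vac$ term). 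So your variant is valid and in fact saves the differentiation step. Two small corrections to your sketch: the obstruction is \emph{not} a $b_{-4}\vac$ component --- $m(c_0)=\alpha^2=\tfrac{1}{4N^2}e^{-4\lambda_0}(\lambda_0(1))^4$ maps under $f$ to the pure power $\tfrac14 b_{-1}^4\vac$, with no $b_{-4}\vac$ term at all, and the failure is detected on the $b_{-1}^4$, $b_{-3}b_{-1}$, $b_{-2}^2$ coordinates as above; and the identification of $\tilde\omega$ that you flag as the main obstacle is a one-line consequence of $f(\lambda_0(1))=T|\lambda_0\rangle=\sqrt N\,b_{-1}|\lambda_0\rangle$ together with $f$ being an algebra map, so it is not a real difficulty.
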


\begin{proof}
Suppose towards a contradiction that we have a pair $(W, F)$ as described above. 

In particular, we can identify $W(1)$ with $\Vir_1$, and $F(1)$ with the map $\phi_0 : \Vir_1 \to V_L \cong V^L$. There is an element $\omega \in W1)$ corresponding to $L_{-2}v_1 \in \Vir_1$; its image in $V^L$ under $F(1)$ is the image of $\omega_0 = \frac{1}{2} b_{-1}^2\vac \in V_L$ under the identification $f$ of (\ref{Eq: identifying V^L and V_L}). Notice that $\frac{1}{2} b_{-1}^2 \vac = \frac{1}{2}(b_{-1}\vac)^2$ in the commutative algebra structure of $V_L$, and $b_{-1}| 0 \rangle = b_{-1}|\lambda_0 \rangle \cdot | - \lambda_0 \rangle$ (here $\lambda _0 = \sqrt{N}$ is the generator of $L = \sqrt{N}\BZ$, and $b = \sqrt{N} \otimes \frac{1}{\sqrt{N}}$ is the generator of $\fh$). Since $b_{-1} | \lambda _ 0 \rangle = \frac{1}{\sqrt{N}} (\lambda_0 )_{-1}| \lambda_0 \rangle = \frac{1}{\sqrt{N}} T | \lambda_0 \rangle$, we conclude from the properties of $f$ that $F(\pt)(\omega) = \frac{1}{2 N} e^{-2 \lambda_0} (\lambda_0(1))^2$. Let us denote this element by $\alpha$. 

Let us now outline the strategy of the proof, before continuing with the computations. 

\begin{description}
\item[Step 1] We compute $\mu_{V^L}(\alpha \otimes \alpha)$ directly, and obtain
\begin{align}\label{Eq: alpha computation}
\alpha \otimes \alpha + \frac{1}{N}e^{-\lo} \lo(1) \otimes e^{\lo} \lo(1) (x-y)^2 + \frac{1}{2} e^0 \otimes e^0 (x-y)^{-4}.
\end{align}
\item[Step 2] Using the compatibility of $F$ with the multiplication structures on $W$ and $\BV^L$, we see that 
\begin{align*}
F(1:2) \mu_W(\omega \otimes \omega) = \mu_{V^L}(\alpha \otimes \alpha);
\end{align*}
combining this with the equation (\ref{Eq: alpha computation}) and the fact that $F(1:2) = F(1,2) \otimes \id_{\BC[(x-y)^{\pm1}]}$, we conclude that there exists some element $\beta \in W(1,2)$ such that $F(1,2)(\beta) = \alpha \otimes \alpha$. 
\item[Step 3] We consider $\del_x \beta$; by compatibility of $F$ with the functoriality of $W$ and $\BV^L$ with respect to the map $\gamma: \{1,2\} \to \{1\}$ we have that 
\begin{align}\label{Eq: beta equation}
\phi_0 \circ \gamma^W_* (\del_x \beta) = f\circ \gamma^{\BV^L}_* \circ F(1,2) (\del_x \beta).
\end{align}
However, we compute the right-hand side of this equation directly, and show that it is not in the image of $\phi_0$. This contradicts the assumption that the pair $(W, F)$ exists, and hence proves the theorem. 
\end{description}

To complete the proof, we will now explain each step. 

\subsection{Step 1: Computation of \texorpdfstring{$\mu_{V^L}(\alpha \otimes \alpha)$}{multiplication of alpha with itself}}
Recall from (\ref{Eq: definition of mu}) that $\mu_{V^L}(\alpha \otimes \alpha) = (\alpha_{(1)} \otimes \alpha_{(1)}) r(\alpha_{(2)} \otimes \alpha_{(2)})$. We compute that 
\begin{align*}
\Delta(\alpha) 
  & = \frac{1}{2N} \left(e^{-2\lambda_0} (\lambda_0(1))^2 \otimes e^0 + 2 e^{-\lambda_0} \lambda_0 (1) \otimes e^{-\lambda_0} \lambda_0 (1) + e^0 \otimes e^{-2\lambda_0} (\lambda_0(1))^2\right).
\end{align*}
In order to compute $\mu_{V^L}(\alpha \otimes \alpha)$, we need to compute $r(\alpha_{(2)}\otimes\alpha_{(2)})$ for each of the three terms in $\alpha_{(2)}$. 

Before we begin our computation, let us write down the value of $r$ on a few basic pairs of elements. This will help us get used to working with $r$, and will also be useful in later calculations. 

First, from the definition of $r$, we have
\begin{align}\label{Eq: r on pure}
r(e^{n\lo} \otimes e^{m\lo}) = (x-y)^ {nmN}.
\end{align}

In particular, when $n = m =0$,
\begin{align}\label{Eq: r on trivial}
r(e^0 \otimes e^0) = 1.
\end{align}

The compatibility of $r$ with the action of $\del$ on the first and second factors, respectively, gives us the following:
\begin{align}
r(\lo(1) \otimes e^{m\lo}) &= \del_x r(e^\lo \otimes e^{m\lo}) = mN(x-y)^{mN -1} \label{Eq: T on x}\\
r(e^{n\lo} \otimes \lo(1)) & = -nN(x-y)^{nN -1} \label{Eq: T on y}.
\end{align}

In particular, when $m$ or $n$ are zero, respectively, the value of $r$ is 0. Combining, we see that 
\begin{align}
r(\lo(1) \otimes \lo(1)) &= N(N-1) (x-y)^{N -2} \label{Eq: T on both}.
\end{align}

To calculate more complicated instances of $r$, we use the compatibility of $r$ with the coproduct $\Delta$. Let us record the following values of $\Delta$, for future reference:
\begin{align}
\Delta(e^{n\lo}) &= e^{n\lo} \otimes e^{n\lo} \label{Eq: Delta on group-like};\\
\Delta(\lo(1)) & = \lo(1) \otimes e^\lo + e^\lo \otimes \lo(1) \label{Eq: Delta on primitive};\\
\Delta((\lo(1)^2) &= (\lo(1))^2 \otimes e^{2\lo} + 2 e^\lo  \lo(1) \otimes e^\lo  \lo(1) + e^{2\lo} \otimes (\lo(1))^2 \label{Eq: Delta on primitive squared}.
\end{align}

Now we begin the calculation of $\mu_{V^L}(\alpha \otimes \alpha)$. First, from (\ref{Eq: r on trivial}), we have 
\begin{align*}
r(e^0 \otimes e^0) = 1.
\end{align*}

Next, we need to compute 

\begin{align*}
r\left(e^{-\lambda_0}\lambda_0(1) \otimes e^{-\lambda_0}\lambda_0(1)\right) & = r\left(e^{-\lambda_0} \otimes \left(e^{-\lambda_0}\lambda_0(1)\right)_{(1)}\right)\cdot r\left(\lambda_0(1) \otimes \left( e^{-\lambda_0}\lambda_0(1)\right) _{(2)}\right).
\end{align*}
We have
\begin{align*}
\Delta\left( e^{-\lambda_0}\lambda_0(1) \right) 
 & = e^{-\lo} \lo(1) \otimes e^0 + e^0 \otimes e^{-\lo} \lo(1),
\end{align*}
so $r\left(e^{-\lambda_0}\lambda_0(1) \otimes e^{-\lambda_0}\lambda_0(1)\right)$ is a sum of two terms, 
\begin{align}
r(e^{-\lo} \otimes e^{-\lambda_0}\lambda_0(1)) \cdot r(\lo(1) \otimes e^0) \label{Eq: first} \\ 
+ \quad r(e^{-\lo} \otimes e^0) \cdot r(\lo(1) \otimes e^{-\lambda_0}\lambda_0(1)). \label{Eq: second}
\end{align} 

By (\ref{Eq: T on x}), the term (\ref{Eq: first}) is 0.

 To compute (\ref{Eq: second}), we observe that $r(e^{-\lo} \otimes e^0) = 1$, and that
\begin{align*}
r(\lo (1) \otimes e^{-\lo} \lo(1)) & = \del_x \left( r(e^\lo \otimes e^{-\lo}\lo(1)) \right) = \del_x \left( r(e^\lo \otimes e^{-\lo}) \cdot r(e^\lo \otimes \lo(1) )\right)\\ 
 & = \del_x \left[ (x-y)^{-N} \cdot (-N)(x-y)^{N-1} \right] 
  = N(x-y)^{-2}.
\end{align*}

We conclude that 
\begin{align*}
r\left(e^{-\lambda_0}\lambda_0(1) \otimes e^{-\lambda_0}\lambda_0(1)\right) = N (x-y)^{-2}.
\end{align*}

Finally we need to calculate $R \defeq r(e^{-2\lo} (\lo(1))^2 \otimes e^{-2\lo} (\lo(1))^2)$. Using our calculation of $\Delta (e^{-\lo} \lo(1))$ from above, we write down $\Delta(e^{-2\lo} (\lo(1))^2)$; then we can write $R$ as a sum of three terms as follows:
\begin{align}
\label{Eq: a} r(e^{-2\lo} \otimes e^{-2\lo} (\lo(1))^2) \cdot r( (\lo(1))^2 \otimes e^0) \\
\label{Eq: b} + \quad 2 r(e^{-2\lo} \otimes e^{-\lo} \lo(1)) \cdot r( (\lo(1))^2 \otimes e^{-\lo} \lo(1) )\\
\label{Eq: c} + \quad r(e^{-2\lo} \otimes e^0) \cdot r ((\lo(1))^2 \otimes e^{-2\lo} (\lo(1))^2) .
\end{align}
However, we check that $r( (\lo(1))^2 \otimes e^0)$ and $r( (\lo(1))^2 \otimes e^{-\lo} \lo(1) )$ are both $0$, so the terms (\ref{Eq: a}) and (\ref{Eq: b}) vanish. To calculate (\ref{Eq: c}), we note that $r(e^{-2\lo} \otimes e^0) = 1,$ and we use the expansion of $\Delta(\lo(1))^2$ to write $r ((\lo(1))^2 \otimes e^{-2\lo} (\lo(1))^2)$ as a sum
\begin{align}
\label{Eq: d} r(e^{2\lo} \otimes e^{-\lo}\lo(1)) \cdot r((\lo(1))^2 \otimes  e^{-\lo}\lo(1) ) \\
\label{Eq: e} +\quad  2r(e^\lo \lo(1) \otimes  e^{-\lo}\lo(1)) \cdot r( e^\lo \lo(1) \otimes e^{-\lo}\lo(1) \\
\label{Eq: f} +\quad r( (\lo(1))^2 \otimes e^{-2\lo} ) \cdot r( e^{2\lo} \otimes  e^{-\lo}\lo(1)). 
\end{align}
The first and last terms vanish, and so it remains to calculate the term (\ref{Eq: e}). Using the calculation of $\Delta(e^{-\lo} \lo(1))$ from above, we see that $r(e^\lo \lo(1) \otimes e^{-\lo} \lo(1))$ is equal to
\begin{align}
 r(e^\lo \otimes e^{-\lo} \lo(1)) \cdot r(\lo(1) \otimes e^0) \label{Eq: g}\\
 + r(e^\lo \otimes e^0) \cdot r(\lo(1) \otimes  e^{-\lo} \lo(1)) \label{Eq: h}.
\end{align}
Once again, (\ref{Eq: g}) is 0, and $r(e^\lo \otimes e^0) = 1$, so 
\begin{align*}
r(e^\lo \lo(1) \otimes e^{-\lo} \lo(1)) =  r(\lo(1) \otimes  e^{-\lo} \lo(1)) = N(x-y)^{-2}. 
\end{align*}
From this it follows that 
\begin{align*}
r ((\lo(1))^2 \otimes e^{-2\lo} (\lo(1))^2) &= 2 r(e^\lo \lo(1) \otimes e^{-\lo} \lo(1))^2 = 2N^2(x-y)^{-4}. 
\end{align*}
Combining our results, we conclude that
\begin{align*}
\mu_{V^L}(\alpha \otimes \alpha)
&= \frac{1}{4N^2}  e^{-2\lo} (\lo(1))^2 \otimes e^{-2\lo} (\lo(1))^2 \\
& \qquad + \frac{1}{N^2}  e^{-\lo}\lo(1) \otimes e^{-\lo} \lo(1) \cdot N (x-y)^{-2} \\
& \qquad + \frac{1}{4N^2} e^0 \otimes e^0 \cdot  2N^2(x-y)^{-4},
\end{align*}
and so
\begin{align}\label{Eq: conclusion}
\mu_{V^L}(\alpha \otimes \alpha) = \alpha \otimes \alpha + \frac{1}{N}e^{-\lo}\lo(1) \otimes e^{-\lo} \lo(1) \cdot (x-y)^{-2}  + \frac{1}{2} e^0 \otimes e^0 \cdot (x-y)^{-4},
\end{align}
as claimed. 

\subsection{Step 2: Existence of \texorpdfstring{$\beta$}{beta}}

Consider now the following commutative diagram, showing the compatibility of $F$ with the singular multiplication structures on $W$ and $\BV^L$:

\begin{center}
\begin{tikzpicture}[>=angle 90] 
\matrix(b)[matrix of math nodes, row sep=2em, column sep=3em, text height=1.5ex, text depth=0.25ex]
{W(1) \otimes W(2) \otimes \BC[(x - y) ^{\pm 1}] & W(1:2) = W(1, 2) \otimes  \BC[(x - y) ^{\pm 1}] \\
V^L \otimes V^L \otimes \BC[(x-y)^{\pm 1}] & V(1:2) = V^L \otimes V^L \otimes \BC[(x-y)^{\pm 1}].\\ };
\path[->, font=\scriptsize]
 (b-1-1) edge node[above]{$\mu_W$} (b-1-2)
 (b-2-1) edge node[below]{$\mu_{V^L}$} (b-2-2)
 (b-1-1) edge node[left]{$F(\pt) \otimes F(\pt) \otimes \id_{\BC[(x-y)^{\pm 1}}$} (b-2-1)
 (b-1-2) edge node[right]{$F(1:2) = F(1,2) \otimes \id_{\BC[(x-y)^{\pm 1}}$} (b-2-2);
\end{tikzpicture}
\end{center}

In particular, $\mu_W(\omega \otimes \omega) \in W(1:2)$ has the property that $F(1:2)(\mu_W(\omega \otimes \omega)) = \mu_{V^L} (\alpha \otimes \alpha)$. It follows that there is some element $\beta$ of $W(1,2)$ such that $F(1,2)(\beta) = \alpha \otimes \alpha$.

\subsection{Step 3: Computations with \texorpdfstring{$\del_x \beta$}{derivative of beta}}

Since $F$ is required to intertwine the functoriality of $W$ and $\BV^L$, the following diagram must commute:
\begin{center}
\begin{tikzpicture}[>=angle 90] 
\matrix(b)[matrix of math nodes, row sep=2em, column sep=3em, text height=1.5ex, text depth=0.25ex]
{W(1,2) & W(1) & \Vir_1\\
 \BV^L(1,2) & \BV^L(1) & \\
 V^L \otimes V^L  & V^L & V_L.\\};
 \path[->, font=\scriptsize]
 (b-1-1) edge node[above]{$\gamma^W_*$} (b-1-2)
 (b-2-1) edge node[above]{$\gamma^V_*$} (b-2-2)
 (b-3-1) edge node[above]{$m$} (b-3-2)
 (b-1-1) edge node[left]{$F(1,2)$} (b-2-1)
 (b-1-2) edge node[right]{$F(1)$} (b-2-2)
 (b-1-3) edge node[right]{$\phi_0$} (b-3-3)
 (b-3-2) edge node[above]{$f$} (b-3-3);
 \path[-]
 (b-2-1) edge[double] (b-3-1)
 (b-2-2) edge[double] (b-3-2)
 (b-1-2) edge[double] (b-1-3);
\end{tikzpicture}
\end{center}

In particular, we must have
\begin{align}\label{eq: commutativity condition}
f \circ m \circ F(1,2) (\del_x \beta) = \phi_0 \circ \gamma^W_* (\del_x \beta). 
\end{align}

We calculate the left-hand side as follows:
\begin{align*}
m \circ F(1,2) (\del_x \beta) &= m (\del_x (F(1,2)(\beta)) ) = m (\del_x (\alpha \otimes \alpha) ) = m ((\del_x \alpha) \otimes \alpha). 
\end{align*}

From the definition of $\alpha$, we calculate that 
\begin{align*}
\del_x (\alpha) 
 & = \frac{1}{N} \left( 2 e^{-2\lo} \lo(1) \lo(2) - e^{-3\lo}(\lo(1))^3 \right),
\end{align*}
and so 
\begin{align*}
m ((\del_x \alpha) \otimes \alpha) = \frac{1}{2N^2}\left(2e^{-4\lo}(\lo(1))^3\lo(2) - e^{-5\lo}(\lo(1))^5 \right).
\end{align*}

Under the isomorphism $f: V^L \to V_L$, this corresponds to the element
\begin{align*}
\frac{1}{2N^2} (\lo)_{-1}^3 (\lo)_{-2} \vac = \frac{1}{2}b_{-2} b_{-1}^3 \vac.
\end{align*}

We claim that this element is not in the image of $\phi_0$, contradicting the equation ($\ref{eq: commutativity condition}$), and thus proving the theorem. To prove this, consider the gradings on $\Vir_1$ and $V_L$ defined by setting
\begin{align*}
\deg v_1 = 0, \qquad & \deg L_{-n} = n \qquad \text{ for $\Vir_1$};\\
\deg b_{-n} = n  \qquad & \qquad  \text{ for $V_L$}. 
\end{align*}

Note that $\phi_0$ is compatible with these gradings. Note also that the degree 5 piece of $\Vir_1$ has dimension 2---it is spanned by the vectors $L_{-5}v_1$ and $L_{-2}L_{-3}v_1$ (since $L_{-3}L_{-2}v_1$ can be written as a linear combination of the latter two vectors). It follows that the image of $\phi_0$ in degree 5 has dimension at most two, and is spanned by $\phi_0(L_{-5}v_1)$ and $\phi_0(L_{-2}L_{-3}v_1)$.

We calculate directly from the definition of $\phi_0$ that 
\begin{align*}
\phi_0(L_{-5}v_1) & = b_{-4}b_{-1}\vac + b_{-3}b_{-2}\vac;\\
\phi_0(L_{-2}L_{-3}v_1) & = 2 b_{-4}b_{-1} \vac + b_{-3}b_{-2}\vac + \frac{1}{2}b_{-2}b_{-1}^3\vac.
\end{align*}

Since $\frac{1}{2}b_{-2}b_{-1}^3\vac$ is not in the span of these two vectors, it cannot be in the image of $\phi_0$. This completes Step 3, and the proof of the theorem. 

\end{proof}

\begin{rmk}\label{Remark: modify morphisms}
An interesting question is whether the notion of a morphism of $(A,H,S)$-vertex algebras can be weakened so that the properties of the morphism $F$ used to reach a contradiction in the proof of Theorem \ref{Theorem: Borcherds doesn't give an equivalence, actual} above no longer hold. One would hope that in that case, the pair $(W, F)$ could be defined; one would also hope that the lattice $(A,H,S)$-vertex algebra defined by Borcherds could be identified in $\VA(A,H,S_{\BBA^1})^{\BBA^1}$ with the $(A, H, S)$-vertex algebra coming from the translation-equivariant lattice factorization algebra.
\end{rmk}

\bibliographystyle{alpha}
\bibliography{AHS-bibliography}

\begin{thebibliography}{BDSHK19}

\bibitem[AB09]{AB}
Iana Anguelova and Maarten Bergvelt.
\newblock {$H_D$}-quantum vertex algebras and bicharacters.
\newblock {\em Communications in Contemporary Mathematics}, 11(6):937--991,
  2009.

\bibitem[BD04]{BD}
Alexander Beilinson and Vladimir Drinfeld.
\newblock {\em Chiral algebras}, volume~51 of {\em American Mathematical
  Society Colloquium Publications}.
\newblock American Mathematical Society, Providence, RI, 2004.

\bibitem[BDSHK19]{BDHK}
Bojko Bakalov, Alberto De~Sole, Reimundo Heluani, and Victor~G. Kac.
\newblock An operadic approach to vertex algebra and poisson vertex algebra
  cohomology.
\newblock {\em Japanese Journal of Mathematics}, 14(2):249--342, Sep 2019.

\bibitem[Bor01]{BorQVA}
Richard~E. Borcherds.
\newblock Quantum vertex algebras.
\newblock In {\em Taniguchi {C}onference on {M}athematics {N}ara '98},
  volume~31 of {\em Adv. Stud. Pure Math.}, pages 51--74. Math. Soc. Japan,
  Tokyo, 2001.

\bibitem[FBZ04]{FBZ}
Edward Frenkel and David Ben-Zvi.
\newblock {\em Vertex algebras and algebraic curves}, volume~88 of {\em
  Mathematical Surveys and Monographs}.
\newblock American Mathematical Society, Providence, RI, second edition, 2004.

\bibitem[FG12]{FG}
John Francis and Dennis Gaitsgory.
\newblock Chiral {K}oszul duality.
\newblock {\em Selecta Math. (N.S.)}, 18(1):27--87, 2012.

\bibitem[HL96]{HL}
Yi-Zhi Huang and James Lepowsky.
\newblock On the {$\CD$}-module and formal-variable approaches to vertex
  algebras.
\newblock In {\em Topics in geometry}, volume~20 of {\em Progr. Nonlinear
  Differential Equations Appl.}, pages 175--202. Birkh{\"a}user Boston, Boston,
  MA, 1996.

\bibitem[Joy]{Joyce}
Dominic Joyce.
\newblock Ringel--{H}all style vertex algebra and {L}ie algebra structures on
  the homology of moduli spaces.
\newblock work-in-progress, http://people.maths.ox.ac.uk/~joyce/hall.pdf.

\bibitem[Pat09]{Patnaik}
Manish~M. Patnaik.
\newblock Vertex algebras as twisted bialgebras: on a theorem of {B}orcherds.
\newblock In {\em Communicating mathematics}, volume 479 of {\em Contemp.
  Math.}, pages 223--238. Amer. Math. Soc., Providence, RI, 2009.

\end{thebibliography}

\end{document}